\title{The stable cohomology of self-equivalences of connected sums of products of spheres}
\author{Robin Stoll}
\date{\today}
\begin{document}

\maketitle

\begin{abstract}
  We identify the cohomology of the stable classifying space of homotopy automorphisms (relative to an embedded disk) of connected sums of $\Sphere k \times \Sphere l$, where $3 \le k < l \le 2k - 2$.
  The result is expressed in terms of Lie graph complex homology.
\end{abstract}

\tableofcontents

\section{Introduction} \label{sec:intro}

The starting point of this article is the following manifold with boundary
\[ M^{k, l}_{g, 1}  \defeq  \connsum_g \big( \Sphere k \times \Sphere l \big)  \setminus \openDisk{k + l} \]
i.e.\ the $g$-fold connected sum of $\Sphere k \times \Sphere l$ with an open disk removed.
Given this, a natural object to consider is the classifying space
\[ \B \autbdry \big( M^{k, l}_{g, 1} \big) \]
of the topological monoid of homotopy automorphisms of $M^{k, l}_{g, 1}$ that fix the boundary pointwise.
It classifies fibrations with fiber $M^{k, l}_{g, 1}$ under the trivial fibration with fiber its boundary (see e.g.\ \cite[Appendix~B]{HL}).
In particular the cohomology of $\B \autbdry(M^{k, l}_{g, 1})$ consists of the characteristic classes for such fibrations.

This cohomology appears to be hard to understand completely.
However, at least rationally, it is possible to say something about the behavior of the \emph{stabilization maps}
\begin{equation} \label{eq:intro_stabilize}
  \Coho * \big( \B \autbdry(M^{k, l}_{g + 1, 1}); \QQ \big)  \longto  \Coho * \big( \B \autbdry(M^{k, l}_{g, 1}); \QQ \big)
\end{equation}
obtained by gluing on, along one of its boundary components, a copy of $\Sphere k \times \Sphere l$ with two disjoint open disks removed.
Namely, in the case $2 \le k = l$, Berglund--Madsen \cite{BM} have shown this map to be an isomorphism in a certain stable range of degrees\footnote{This stable range was extended significantly by Krannich \cite{Kra}.}, and this was extended by Grey \cite{Gre} to the cases $3 \le k \le l \le 2k - 2$.

Moreover \cite{BM} contains, in the case $3 \le k = l$, a combinatorial description (in terms of graph complex homology) of the \emph{stable cohomology}, i.e.\ the limit of the maps \eqref{eq:intro_stabilize} as $g$ goes to infinity.
In this paper we provide a similar description when $k \neq l$.
The following is our main result.

\begin{theorem}[see \cref{thm:main}] \label{thm:intro_main}
  Let $3 \le k < l \le 2k - 2$ and $2 \le g$ be integers.
  Then there is, in cohomological degrees $\le g - 2$, an isomorphism of graded algebras
  \[ \Coho * \big( \B \autbdry (M^{k,l}_{g,1}); \QQ \big)  \iso  \Coho * \big( \GL(\ZZ); \QQ \big) \tensor \Coho * \big( \dual{\UGC {k + l - 2} \Lie} \big) \]
  compatible with the stabilization maps on the left hand side.
  Here we set $\GL(\ZZ) \defeq \colim{g \in \NNo} \GL[g](\ZZ)$, denote by $\UGC m \Lie$ the \emph{$m$-twisted graph complex} associated to the cyclic Lie operad (with its canonical coalgebra structure), and write $\dual{(\blank)}$ for linear dualization.
  
  In particular, after stabilizing, we obtain an isomorphism
  \[ \lim {g \in \NNo} \Coho * \big( \B \autbdry (M^{k,l}_{g,1}); \QQ \big)  \iso  \Coho * \big( \GL(\ZZ); \QQ \big) \tensor \Coho * \big( \dual{\UGC {k + l - 2} \Lie} \big) \]
  of graded algebras.
\end{theorem}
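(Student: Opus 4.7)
The strategy is to follow the framework used by Berglund--Madsen in the case $k = l$: rationally replace $\B\autbdry(M^{k,l}_{g,1})$ by the classifying space of a dg Lie algebra of derivations of a Lie model, and then stably decompose its Chevalley--Eilenberg cohomology into a ``linear'' part producing $\Coho *(\GL(\ZZ); \QQ)$ and a ``graph'' part producing $\Coho *(\dual{\UGC{k + l - 2}\Lie})$.

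First I would construct an explicit minimal Lie model $L_g$ for $M^{k,l}_{g,1}$ relative to its boundary. Since $k \ge 3$ the manifold is simply connected, and as it has the homotopy type of a wedge of spheres with a top cell attached, it admits a formal model: namely the free graded Lie algebra on classes $a_i$ in degree $k - 1$ and $b_i$ in degree $l - 1$ for $i = 1, \dots, g$, together with a relator $c = \sum_i [a_i, b_i]$ of degree $k + l - 2$ representing the attaching class of the boundary sphere. The stabilization $M^{k,l}_{g,1} \hookrightarrow M^{k,l}_{g+1,1}$ is modeled by adjoining a new hyperbolic pair $(a_{g+1}, b_{g+1})$ to $c$.

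Second, using the dg Lie models of classifying spaces of homotopy automorphisms (in the style of Tanré or Berglund--Zeman), I would identify the $\QQ$-localization of $\B\autbdry(M^{k,l}_{g,1})$ with the classifying space of the positive truncation of the dg Lie algebra of derivations of the free Lie algebra on the $a_i, b_i$ that annihilate $c$, and thereby compute its rational cohomology in the stable range as a Chevalley--Eilenberg cohomology. The zero-degree part of this derivation Lie algebra preserves the pairing between the $a$-span and the $b$-span, and is naturally identified with $\GL[g](\ZZ)$; the positively graded part stabilizes to positive derivations of the free graded Lie algebra on a ``hyperbolic'' module $V_\infty \oplus \dual{V_\infty}$.

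Third, I would take the stable limit in two steps. Borel's theorem controls the $\GL(\ZZ)$-piece coming from the zero-degree part, while a Kontsevich-type graph homology theorem applied to positive derivations of the free Lie algebra on $V_\infty \oplus \dual{V_\infty}$ identifies the stable $\GL$-invariants of the Chevalley--Eilenberg cohomology with $\Coho *(\dual{\UGC{k + l - 2}\Lie})$, the shift $k + l - 2$ matching the degree of $c$. Stable invariant theory then splits the two contributions as a tensor product in the claimed range. The main obstacle I expect is precisely this third step: one has to adapt Berglund--Madsen's orthogonal/symplectic invariant-theoretic computation to the general-linear case, and verify that the boundary condition (derivations vanishing on $c$) translates on the graph-complex side into exactly the connectivity and valence conditions defining $\UGC{k + l - 2}\Lie$, while also matching cup product on the topological side with the coproduct dual to the canonical coalgebra structure on the graph complex.
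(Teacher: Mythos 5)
Your proposal correctly identifies the overall architecture (Quillen model, Berglund--Zeman, Borel, graph complexes via stable invariant theory), but there is a genuine gap at precisely the step you flag as the main obstacle, and it is not a matter of routine adaptation.

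When you run coinvariant theory for the general linear group — as opposed to the symplectic or orthogonal group used by Berglund--Madsen — the output is not $\UGC{k+l-2}{\Lie}$. The fundamental theorem of $\GL$-coinvariant theory identifies $\GL(V)$-coinvariants of $(\End V)^{\otimes k}$ with $\QQ[\Sigma_k]$, and feeding that through the Chevalley--Eilenberg chains produces a \emph{directed} graph complex $\DGC{k+l-2}{\Lie}$: each edge carries an orientation which must be preserved by graph automorphisms, and the positive truncation of the derivation algebra corresponds to a combinatorial truncation condition (no vertices of valence $0$ or $1$, and bivalent vertices have only incoming edges). The undirected graph complex, where orientation-reversal only contributes a sign, is a genuinely different object, and the equality of their homologies is a theorem, not a bookkeeping exercise. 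The paper proves it by a spectral sequence on the number of bivalent vertices, showing that contracting the two edges at a bivalent vertex kills the difference between the two edge orientations in homology. Your plan jumps directly to the undirected complex and therefore skips the central new ingredient of the argument.

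A second, smaller gap: to get the stated stable range $\le g - 2$ you need more than classical invariant theory. The paper invokes a recent result of Li--Sun (a slope-$1$ vanishing result for rational cohomology of $\GL_n(\ZZ)$ with algebraic coefficients) to pass from $\Gamma_g$-cohomology with coefficients to a tensor product with $\GL(\ZZ)$-cohomology of invariants. Without this the argument yields only the weaker range available from Grey's homological stability; your ``stable invariant theory splits the two contributions'' gives the right statement but not the claimed range.
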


\begin{remark*}
  We also prove a version of \cref{thm:intro_main} for cohomology with certain local coefficients, see \cref{thm:main_coeff}.
  In this generality the description we give is in terms of a \emph{directed} graph complex (see below).
  It should be possible to relate this to an undirected graph complex, just as in the case of trivial coefficients above, but we do not carry this out in this paper.
\end{remark*}

\begin{remark*}
  The proof of \cref{thm:intro_main} does not rely on the stability result of Grey \cite{Gre} (though we do reuse some of its ingredients).
  In particular we obtain a new proof of his result (for the manifolds $M^{k, l}_{g, 1}$), while also roughly doubling the stable range.
  This is made possible by a result of Li--Sun \cite{LS19}, which yields a slope $1$ vanishing of the rational cohomology of $\GL[n](\ZZ)$ with certain coefficients.
  That a result of this form would lead to an improvement of the stable range had been suggested by Krannich \cite[p.~1070]{Kra}.
\end{remark*}

Surprisingly, the graph complex appearing in \cref{thm:intro_main} only depends on the dimension $k + l$ and not on $k$ and $l$ themselves.
Moreover, when $k + l$ is even, it is the same graph complex that appears in \cite{BM}, i.e.\ in the case $k = l$ (when $k + l$ is odd, it is defined similarly to the even case but appears to have quite different homology; see further below).
For both of these observations there seems to be no a priori reason to expect them to be true.
The second one is particularly interesting when considering our proof, where first a different graph complex appears, of which we then show that it actually has the same homology as $\UGC {k + l - 2} \Lie$.

We will now describe the terms appearing on the right hand sides of the isomorphisms of \cref{thm:intro_main} in more detail.
First note that
\[ \Coho{*} \big( \GL(\ZZ); \QQ \big)  \iso  \freegca \gen \QQ {x_i \mid i \in \NNpos}  \qquad\text{where}\quad \deg{x_i} = 4i + 1 \]
by a classical result of Borel \cite{Bor74}.
The graph complex $\UGC m {\operad C}$, for $\operad C$ a cyclic operad, is also a well-known object.
Versions of it were first described by Kontsevich \cite{Kon93,Kon94}, but they have, since then, appeared in many different places, see e.g.\ \cite{GK98,CV,LV,BM}.

We now sketch the definition of this graph complex.
It is generated by graphs (potentially with loops and parallel edges) equipped with an orientation of each edge and a labeling of each vertex with an element of $\operad C$ of cyclic arity the valence of the vertex (which we assume to be at least three).
We consider a vertex to have homological degree $1 - m$ and an edge to have homological degree $m$.
The resulting graded vector space is quotiented by the actions of the automorphism groups of the graphs, taking into account the homological degrees of the vertices and edges, and yielding an extra sign $(-1)^{m+1}$ whenever the orientation of an edge is flipped.
The differential is defined to be the sum over all possible contractions of non-loop edges.
When an edge is contracted, the label of the new vertex is obtained via the cyclic operad composition of the labels of the two contracted vertices, as in the following picture.
\begin{center}
  \begin{tikzpicture} [scale = 1.5]
  \tikzstyle{vertex} = [circle, draw, inner sep = 0.05cm]
  \tikzstyle{bigvertex} = [ellipse, draw, inner sep = 0.05cm]
  \tikzstyle{label} = [above = -0.05cm, font = \scriptsize]
  
  \node[vertex] (l) at (0,0) {$\xi_1$};
  \node[vertex] (r) at (1,0) {$\xi_2$};
  
  \draw (l.east) -- (r.west) node[label, very near start] {$i$} node[label, very near end] {$j$};
  
  \draw (l.north west) -- ($2*(l.north west) - (l.center)$);
  \draw (l.west) -- ($2*(l.west) - (l.center)$);
  \draw (l.south west) -- ($2*(l.south west) - (l.center)$);
  \draw (r.north east) -- ($2*(r.north east) - (r.center)$);
  \draw (r.east) -- ($2*(r.east) - (r.center)$);
  \draw (r.south east) -- ($2*(r.south east) - (r.center)$);
  
  \node at (2,0) {$\longmapsto$};
  
  \node[bigvertex] (c) at (3.6,0) {$\xi_1 \opcomp[i]{j} \xi_2$};
  
  \draw (c.north west) -- ($1.75*(c.north west) - 0.75*(c.center)$);
  \draw (c.west) -- ($1.5*(c.west) - 0.5*(c.center)$);
  \draw (c.south west) -- ($1.75*(c.south west) - 0.75*(c.center)$);
  \draw (c.north east) -- ($1.75*(c.north east) - 0.75*(c.center)$);
  \draw (c.east) -- ($1.5*(c.east) - 0.5*(c.center)$);
  \draw (c.south east) -- ($1.75*(c.south east) - 0.75*(c.center)$);
  \end{tikzpicture}
\end{center}
The graph complex has the structure of a cocommutative differential graded coalgebra with comultiplication given by the sum over all possible ways to partition the connected components of a graph into two sets.

Note that, up to regrading, the graph complex and in particular its homology only depend on the parity of $m$.
Even though this homology could theoretically be computed directly from its definition, this has only be feasible in low degrees so far.
Its structure in higher degrees remains largely mysterious.
By a result of Kontsevich \cite{Kon93} (for $m$ even; see also Conant--Vogtmann \cite{CV}) and Lazarev--Voronov \cite{LV} (for $m$ odd), Lie graph complex homology is closely related to the rational homology (with twisted coefficients if $m$ is odd) of the group $\Out(F_g)$ of outer automorphisms of the free group on $g$ generators.
See \cref{rem:lie_graph_homology} for a summary of what is known about this.
In particular, computations of Brun--Willwacher \cite{BW} imply that $\Ho p (\UGC m \Lie)$ is trivial for $0 < p \le 3m + 2$ when $m \ge 3$ is odd (see \cref{rem:lie_graph_homology}).
Using this, we obtain the following consequence of our main theorem.

\begin{corollary} \label{cor:intro}
  Let $3 \le k < l \le 2k - 2$ and $2 \le g$ be integers such that $k + l$ is odd.
  Then there is, in cohomological degrees $\le {\min} \big( g - 2, 3(k + l) - 4 \big)$, an isomorphism of graded algebras
  \[ \Coho * \big( \B \autbdry (M^{k,l}_{g,1}); \QQ \big)  \iso  \Coho * \big( \GL(\ZZ); \QQ \big) \]
  compatible with the stabilization maps on the left hand side.
\end{corollary}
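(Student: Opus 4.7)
The plan is to reduce \cref{cor:intro} to \cref{thm:intro_main} by showing that the graph complex tensor factor trivializes in the relevant range. First I would apply \cref{thm:intro_main} to obtain, in cohomological degrees $\le g - 2$, the isomorphism of graded algebras
\[ \Coho * \big( \B \autbdry (M^{k,l}_{g,1}); \QQ \big) \iso \Coho * \big( \GL(\ZZ); \QQ \big) \tensor \Coho * \big( \dual{\UGC {k + l - 2} \Lie} \big) \]
compatible with the stabilization maps. Writing $m \defeq k + l - 2$, it then suffices to show that $\Coho * \big( \dual{\UGC m \Lie} \big)$ equals $\QQ$ in degree $0$ and vanishes in cohomological degrees $1 \le p \le 3(k+l) - 4$.

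Since $k + l$ is odd, the parameter $m$ is odd; and the hypotheses $k \ge 3$, $l \ge k + 1$ yield $k + l \ge 7$, hence $m \ge 5 \ge 3$. This is precisely the range in which the Brun--Willwacher computation recalled just before the corollary applies, giving $\Ho p (\UGC m \Lie) = 0$ for all $0 < p \le 3m + 2 = 3(k + l) - 4$. For the degree-zero part, I would argue directly from the conventions: any nonempty generator has $v$ vertices of degree $1 - m$ and $e$ edges of degree $m$, and the valence bound $2e \ge 3v$ would force a degree-zero generator to satisfy $e/v = (m-1)/m \ge 3/2$, which is impossible for $m \ge 3$. Hence $\Ho 0 (\UGC m \Lie) = \QQ$, spanned by the empty graph, and dualizing gives the desired description of $\Coho * \big( \dual{\UGC m \Lie} \big)$ in the required range.

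Putting these together, restricting the isomorphism of \cref{thm:intro_main} to total cohomological degree $\le \min(g - 2, 3(k + l) - 4)$ forces the second tensor factor to contribute only its degree-zero copy of $\QQ$, so the right hand side collapses to $\Coho * \big( \GL(\ZZ); \QQ \big)$ as a graded algebra (the algebra structure is preserved because $\QQ$ in degree $0$ is the unit of the tensor product). Compatibility with stabilization is inherited directly from \cref{thm:intro_main}. I do not anticipate a substantive obstacle here: the corollary is essentially a degree-counting exercise on top of \cref{thm:intro_main} and the cited Brun--Willwacher vanishing.
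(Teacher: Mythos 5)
Your proposal is correct and matches the paper's (implicit) derivation: the corollary is obtained from \cref{thm:intro_main} by combining it with the Brun--Willwacher vanishing $\Ho p (\UGC m \Lie) = 0$ for $0 < p \le 3m + 2$ when $m \ge 3$ is odd (\cref{rem:lie_graph_homology}), together with the observation that $\Ho 0 (\UGC m \Lie) = \QQ$. Your degree-counting argument for $\Ho 0 = \QQ$ is correct; the paper records the same fact via \cref{rem:graph_complex_concentrated}, which shows the nonempty part of the complex sits in degrees $\ge m + 1$, but either justification suffices.
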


\paragraph{Sketch of the proof of \cref{thm:intro_main}.}
From now on we fix some $3 \le k < l \le 2k - 2$ and drop them from the notation.
The starting point of our proof are methods recently developed by Berglund--Zeman \cite{BZ} which in particular yield an isomorphism
\begin{equation} \label{eq:intro_BZ}
  \Coho * \big( \B \autbdry(M_{g, 1}); P \big)  \iso  \Coho * \big( \Gamma_g^\ZZ; \CEcoho * (\lie g_g) \tensor P \big)
\end{equation}
for any $\Gamma^\ZZ_g$-representation $P$ (the details of applying their methods to relative self-equivalences are worked out in the upcoming paper \cite{Sto} joint with Berglund).
Here the differential graded Lie algebra $\lie g_g$ is the Quillen model of the 1-connected cover of $\B \autbdry(M_{g, 1})$, we write $\CEcoho * (\lie g_g)$ for its Chevalley--Eilenberg cohomology, and we set
\[ \Gamma_g^R  \defeq  {\Aut} \big( \rHo{*}(M_{g,1}; R), \intpair{\blank}{\blank} \big)  \iso  \GL[g](R) \]
where $R$ is a commutative ring and $\intpair{\blank}{\blank}$ is the intersection pairing.
The Lie algebra $\lie g_g$ has an explicit description which was shown by Berglund--Madsen \cite{BM} to be isomorphic to
\[ \trunc {\Schur[\big] {(\shift[-(k + l - 2)] \operad \Lie)} {\shift[-1] \rHo{*}(M_{g,1}; \QQ)}} \]
equipped with the trivial differential.
Here $\Lie$ is (the symmetric sequence underlying) the cyclic Lie operad, $\shift$ denotes a degree shift, $\Schur {\bijmod A} {\blank}$ is the Schur functor associated to $\bijmod A$, and $\trunc {(\blank)}$ denotes positive truncation.

The next step is to simplify the right hand side of \eqref{eq:intro_BZ}.
To this end we prove, using the previously mentioned work of Li--Sun \cite{LS19}, that the canonical map
\[ \Coho{p} \big( \Gamma_g^\ZZ; \QQ \big) \tensor \Coho{q} \Big( \big( \CEcochains{*}(\lie g_g) \tensor P \big)^{\Gamma_g^\QQ} \Big)  \longto  \Coho{p} \big( \Gamma_g^\ZZ; \CEcoho{q}(\lie g_g) \tensor P \big) \]
is an isomorphism in a stable range when $P$ is an algebraic representation of $\Gamma^\QQ_g$.
(See also work of Krannich \cite{Kra}, where an analogous argument is made to obtain a similar stable range in the case $k = l$.)

This shows that it is enough to identify the (stable) cohomology of the invariants $\big( \CEcochains{*}(\lie g_g) \tensor P \big)^{\Gamma^\QQ_g}$.
For sake of exposition, we will now focus on the case $P = \QQ$.
In the paper, we do carry out the first part of the argument for a certain class of non-trivial representations as well, though.
Similar situations, arising from $\SP[2g]$ or $\O[g,g]$ instead of $\GL[g]$, have been considered by Kontsevich \cite{Kon93,Kon94} (see also Conant--Vogtmann \cite{CV}) in the symplectic case and by Berglund--Madsen \cite{BM} in both cases.
In our situation we use coinvariant theory methods similar to those of \cite{BM} to obtain a description of the stable cohomology in terms of a directed version of the Lie graph complex, which we denote by $\DGC {m} {\Lie}$.
It is defined in the same way as $\UGC{m}{\Lie}$, except that every edge is equipped with a direction, which (unlike an orientation) must be respected by automorphisms of the graph.

\begin{theorem}[see \cref{lemma:trunc_dgc}] \label{thm:intro_dgc}
  There is an isomorphism
  \[ \colim{g \in \NNo} \coinv {\CEchains * (\lie g_g) } {\Gamma_g^\QQ}  \iso  \DGCtrunc {k + l - 2} {\Lie} \]
  of differential graded coalgebras, compatible with the stabilization maps on the left hand side.
  Here $\DGCtrunc m {\Lie}$ is the subcomplex of $\DGC m {\Lie}$ spanned by \emph{truncated} directed graphs, which are those that do not have vertices of valence zero or one and whose vertices of valence two have only incoming edges.
\end{theorem}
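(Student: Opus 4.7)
The plan is to give a Kontsevich-style graph complex description of $\CEchains{*}(\lie g_g)$ and to show that the $\Gamma_g^\QQ$-coinvariants stably convert external legs into directed edges, producing $\DGCtrunc{k + l - 2}{\Lie}$. Throughout I would use the Berglund--Madsen identification $\lie g_g \iso \trunc{\Schur[\big]{(\shift[-(k + l - 2)] \operad \Lie)}{\shift[-1] \rHo{*}(M_{g, 1}; \QQ)}}$, together with the decomposition $\rHo{*}(M_{g, 1}; \QQ) \iso V_g \oplus W_g$, where $V_g$ is the standard $\Gamma_g^\QQ$-representation concentrated in degree $k$ and $W_g \iso V_g^*$ sits in degree $l$, dually paired by the intersection form.

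First I would identify $\CEchains{*}$ of $\Schur{(\shift[-m] \operad \Lie)}{\shift[-1] H}$, ignoring the truncation, with a decorated graph complex in the style of Getzler--Kapranov: its generators are graphs whose vertices are labeled by cyclic Lie elements of the corresponding cyclic arity and whose external legs are labeled by basis elements of $H$. The CE differential matches up with edge contraction (the latter coming from cyclic Lie composition), and the coproduct coming from the free graded-commutative coalgebra structure on $\shift \lie g_g$ matches the partition-of-components coproduct on graphs.

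Next I would take $\Gamma_g^\QQ = \GL[g](\QQ)$-coinvariants on the external legs. By the first fundamental theorem of invariant theory for $\GL$, these coinvariants are spanned (stably, i.e.\ after passing to $\colim{g \in \NNo}$) by the complete matchings of $V_g$-labeled legs with $W_g$-labeled legs via the canonical pairing. Each matched pair then fuses into a single edge, canonically oriented from the $V_g$-leg to the $W_g$-leg, so that the decorated graph becomes a directed graph. This realizes the stable coinvariants as $\DGC{k + l - 2}{\Lie}$ as differential graded coalgebras, the twist $m = k + l - 2$ being inherited from the operad shift. The truncation condition then falls out of a direct degree count: cyclic Lie vanishes in cyclic arities $\le 1$, and for the one-dimensional cyclic Lie in cyclic arity $2$ the degree of the corresponding vertex works out to $l - k > 0$, $0$, or $k - l < 0$ according as both, one, or neither of its two half-edges come from $W_g$; so only the all-$W_g$ case, i.e.\ both edges incoming, survives positive truncation.

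The main obstacle I expect is pinning down the graph complex description of $\CEchains{*}$ carefully enough to track signs---in particular the vertex degree $1 - m$, edge degree $m$, and sign $(-1)^{m + 1}$ arising when an edge orientation is flipped should all emerge naturally from the interplay between the operad shift $\shift[-m]$, the shift $\shift[-1]$ on $H$, and the shifts built into the CE chains construction. Once these conventions are fixed, the invariant-theoretic and truncation analyses are essentially formal, as are the comparisons of differentials and coproducts on the two sides and the verification that everything is compatible with the stabilization maps in $g$.
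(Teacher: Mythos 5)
Your proposal is correct and follows essentially the same route as the paper's proof: the Berglund--Madsen identification of $\lie g_g$ as a truncated convolution Lie algebra, a double Schur functor / decorated graph description of its Chevalley--Eilenberg chains, stable $\GL$-coinvariant theory converting dual pairs of external legs into directed edges, and a degree count on bivalent vertices giving the truncation condition. The paper develops the coinvariant step through a more elaborate formal framework (matching groupoids and Kan extensions, leading to an explicit graph-complex identification), but your outline correctly identifies all the key ideas, including the orientation convention from $V_g$ to $W_g$ and why only all-incoming bivalent vertices survive positive truncation.
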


\begin{remark*}
  At the heart of this theorem actually lies an unstable identification of a certain subcomplex of $\coinv {\CEchains * (\lie g_g) } {\Gamma_g}$.
  Moreover we prove a more general statement, which also applies to a certain class of non-trivial representations $P$, as well as to all cyclic operads and not just $\Lie$.
\end{remark*}

While less ubiquitous than undirected graph complexes, directed versions have appeared in the literature before, see for example \cite{Wil,Ziv,DR}.
Surprisingly it turns out that the homology of the truncated directed graph complex $\DGCtrunc m {\Lie}$ is actually isomorphic to the one of $\UGC m {\Lie}$.
We will now explain the idea of this argument.
Restricting the differential of $\DGCtrunc m {\Lie}$ to the two edges incident to a vertex of valence two (which must be labeled by the identity operation) yields the following picture.
\begin{center}
  \begin{tikzpicture} [scale = 1.5]
  \tikzstyle{vertex}=[circle, draw, inner sep = 0.05cm]
  \tikzstyle{bigvertex}=[ellipse, draw, inner sep = 0.05cm, minimum width = 1cm]
  
  \node[vertex] (l1) at (-1,0) {$\xi_1$};
  \node[vertex] (m1) at (0,0) {$\id$};
  \node[vertex] (r1) at (1,0) {$\xi_2$};
  
  \draw (l1.north west) -- ($2*(l1.north west) - (l1.center)$);
  \draw (l1.west) -- ($2*(l1.west) - (l1.center)$);
  \draw (l1.south west) -- ($2*(l1.south west) - (l1.center)$);
  \draw (r1.north east) -- ($2*(r1.north east) - (r1.center)$);
  \draw (r1.east) -- ($2*(r1.east) - (r1.center)$);
  \draw (r1.south east) -- ($2*(r1.south east) - (r1.center)$);
  
  \node at (2,0) {$\longmapsto$};
  
  \node[vertex] (l2) at (3,0) {$\xi_1$};
  \node[bigvertex] (r2) at (4,0) {$\xi_2$};
  
  \draw (l2.north west) -- ($2*(l2.north west) - (l2.center)$);
  \draw (l2.west) -- ($2*(l2.west) - (l2.center)$);
  \draw (l2.south west) -- ($2*(l2.south west) - (l2.center)$);
  \draw (r2.north east) -- ($1.75*(r2.north east) - 0.75*(r2.center)$);
  \draw (r2.east) -- ($1.5*(r2.east) - 0.5*(r2.center)$);
  \draw (r2.south east) -- ($1.75*(r2.south east) - 0.75*(r2.center)$);
  
  \node at (4.75,0) {$+$};
  
  \node[bigvertex] (l3) at (5.5,0) {$\xi_1$};
  \node[vertex] (r3) at (6.5,0) {$\xi_2$};
  
  \draw (l3.north west) -- ($1.75*(l3.north west) - 0.75*(l3.center)$);
  \draw (l3.west) -- ($1.5*(l3.west) - 0.5*(l3.center)$);
  \draw (l3.south west) -- ($1.75*(l3.south west) - 0.75*(l3.center)$);
  \draw (r3.north east) -- ($2*(r3.north east) - (r3.center)$);
  \draw (r3.east) -- ($2*(r3.east) - (r3.center)$);
  \draw (r3.south east) -- ($2*(r3.south east) - (r3.center)$);
  
  \begin{scope}[decoration = {markings, mark = at position 0.6 with {\arrow{>}}}]
    \draw[postaction={decorate}] (l1.east) -- (m1.west);
    \draw[postaction={decorate}] (r1.west) -- (m1.east);
    
    \draw[postaction={decorate}] (l2.east) -- (r2.west);
    
    \draw[postaction={decorate}] (r3.west) -- (l3.east);
  \end{scope}
  \end{tikzpicture}
\end{center}
Thus, intuitively, taking homology should kill the difference between the two orientations of an edge.
This can be formalized using a spectral sequence argument to prove the following theorem.
Together with the preceding results, this implies \cref{thm:intro_main}.

\begin{theorem}[see \cref{lemma:UGC_diff}] \label{thm:intro_ugc}
  Let $\operad C$ be a cyclic operad such that $\cyc {\operad C} 2 \iso \linspan{\id}$.
  Then there is, for any $m \in \NN$, an isomorphism
  \[ \Ho * \big( \DGCtrunc m {\operad C} \big)  \iso  \Ho * \big( \UGC m {\operad C} \big) \]
  of graded coalgebras.
\end{theorem}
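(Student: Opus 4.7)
My strategy is to filter $\DGCtrunc m {\operad C}$ in a way that isolates the role of bivalent vertices, and then to use the resulting spectral sequence---backed up by explicit local chain maps---to identify its homology with $\Ho * (\UGC m {\operad C})$. The key combinatorial observation is that in any truncated directed graph, no two bivalent vertices can be adjacent, since the edge between them would have to be simultaneously incoming to both. Combined with the hypothesis $\cyc {\operad C} 2 \iso \linspan{\id}$, which forces every bivalent vertex to carry the label $\id$, this implies that each such graph has a well-defined skeleton of vertices of valence $\ge 3$, and that every pre-edge between two skeleton positions is either a direct directed edge (in one of two directions) or a single edge subdivided by one $\id$-labeled bivalent vertex.

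Next I would decompose the differential as $d = d_{\mathrm{biv}} + d_{\mathrm{sk}}$, where $d_{\mathrm{biv}}$ contracts edges incident to bivalent vertices and $d_{\mathrm{sk}}$ contracts the remaining non-loop edges. Contracting via $d_{\mathrm{sk}}$ strictly decreases the number of vertices of valence $\ge 3$, while $d_{\mathrm{biv}}$ preserves it, so filtering by that count gives a spectral sequence with $E^0$-differential $d_{\mathrm{biv}}$, bounded in each fixed degree and hence convergent. Thanks to the non-adjacency observation, $d_{\mathrm{biv}}$ on a fixed skeleton splits into a sum of independent local differentials, one per pre-edge slot, each of the form $d_{\mathrm{biv}}(B) = A \pm A'$ on a three-dimensional complex $A \oplus A' \oplus B$, where $A, A'$ are the two directions of a direct edge and $B$ is the bivalent-mediated version. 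Its homology is one-dimensional, spanned by $[A]$ modulo the relation $A' = \mp A$, which is precisely the relation defining an oriented edge in $\UGC m {\operad C}$. After checking that the assembled signs match those of the undirected graph complex, $E^1$ is identified with $\UGC m {\operad C}$ and $d_1$ with its edge-contraction differential, so $E^2 \iso \Ho * (\UGC m {\operad C})$.

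To upgrade this to an honest isomorphism $\Ho * (\DGCtrunc m {\operad C}) \iso \Ho * (\UGC m {\operad C})$ of graded coalgebras---and, simultaneously, to bypass proving degeneration of the spectral sequence abstractly---I would promote the above analysis to an explicit pair of maps: define $\psi \colon \DGCtrunc m {\operad C} \to \UGC m {\operad C}$ by sending bivalent-mediated pre-edges to zero and direct edges to the corresponding oriented edge with signs matching the local relation, and $\phi \colon \UGC m {\operad C} \to \DGCtrunc m {\operad C}$ by lifting an oriented edge to a directed edge in a chosen direction. Both maps respect the coalgebra structure coming from partitioning connected components, $\psi \phi = \id$ is immediate, and $\phi \psi \simeq \id$ follows by globalizing the evident contraction of the local three-dimensional complex onto the span of $A$. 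The main obstacle I foresee is not conceptual but computational: carefully tracking the signs, both the Koszul signs arising from assembling the local chain homotopy globally and the signs produced by the cyclic-operad automorphism actions on bivalent vertices, to ensure that $\phi$ and $\psi$ are chain maps and that the identification matches the $(-1)^{m+1}$ sign convention for flipping orientations in $\UGC m {\operad C}$.
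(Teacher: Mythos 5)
Your filtration by the number of bivalent vertices, the observation that no two bivalent vertices are adjacent, the resulting ``local three-term complex'' analysis, and the identification of $E^1$ with $\UGC m{\operad C}$ all closely parallel the paper's argument (\cref{sec:undirected_gc}): the paper packages the same data as a first-quadrant double complex $C_{p,q} = \DGCtrunc m{\operad C}_{p+q,q}$, uses the column filtration, and proves $E^1$-concentration in row $q=0$ via the inductive \cref{lemma:cube_homology} rather than your pre-edge-by-pre-edge splitting; these are essentially the same calculation. Your surjection $\psi$ is exactly the paper's map $\Theta = \inv\Phi\circ\Psi$, and your observation that it respects the coalgebra structure is also what the paper checks.

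The genuine gap is in your proposed section $\phi\colon \UGC m{\operad C} \to \DGCtrunc m{\operad C}$ and the accompanying chain homotopy. Such a $\phi$ is not well-defined: $\UGC m{\operad C}$ is a colimit over $\Graph[\ge 3]$, and any isomorphism $\chi\colon G\to G'$ that reverses the flag order of an edge $e$ acts on $\Or(e)^{\tensor m+1}$ by $(-1)^{m+1}$, so the same class in $\UGC m{\operad C}$ is represented by decorated graphs that your rule would lift to directed graphs with that edge pointing in opposite directions. These two directed graphs are \emph{not} related by any isomorphism in $\DirGraphtrunc$ (morphisms of directed graphs preserve edge directions), hence not by any relation in $\DGCtrunc m{\operad C}$. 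So ``lifting an oriented edge to a directed edge in a chosen direction'' does not descend to a linear map out of the quotient $\UGC m{\operad C}$, let alone a chain map, and $\psi\phi = \id$ cannot even be stated. This is precisely why the paper never constructs such a $\phi$: the map $\Phi$ of \eqref{eq:gc_identification} lands in the quotient $\Ho 0(C^2_{\bullet,*})$ (where the needed relation $[A]=(-1)^{m+1}[A']$ does hold, as established in \cref{lemma:UDec_iso}), not in $\DGCtrunc m{\operad C}$ itself. The paper then obtains the quasi-isomorphism from the single surjection $\Psi$ alone, via the comparison theorem for double-complex spectral sequences (\cref{lemma:double_complex_ss_collapse}). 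Since your spectral-sequence setup already yields the abstract isomorphism and an explicit description of the surjection inducing it, the attempted upgrade to an explicit homotopy equivalence is both unnecessary and, as sketched, unavailable.
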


\begin{remark*}
  We only carry out this argument in the situation arising from trivial coefficients $P = \QQ$.
  There should exist a more general statement of this form, though.
\end{remark*}

\begin{remark*}
  For the non-truncated directed graph complex an argument similar to this has been sketched by Willwacher \cite[Appendix~K]{Wil}.
\end{remark*}

\paragraph{Related work and further research.}
Building on their work for homotopy automorphisms, Berglund--Madsen \cite{BM} also computed, for $k \ge 3$, the stable cohomology of the classifying spaces $\B \widetilde{\mathrm{Diff}}_\del \big( M^{k,k}_{g,1} \big)$ of block diffeomorphisms.
Combining their methods with the methods of our paper (as well as an upgrade of the methods of \cite{BZ} to automorphisms of bundles, which is planned to be contained in the upcoming paper \cite{Sto} joint with Berglund or a follow-up) should yield a computation of the stable cohomology of $\B \widetilde{\mathrm{Diff}}_\del \big( M^{k,l}_{g,1} \big)$ for $3 \le k < l \le 2k - 2$.
This is currently work in progress.
It should significantly simplify and extend a result of Ebert--Reinhold \cite{ER}, who compute, in a hands-on way, the stable cohomology of $\B \widetilde{\mathrm{Diff}}_\del \big( M^{k,k+1}_{g,1} \big)$ up to degree roughly $k$.
From this they deduce, using classical methods as well as a recent result of Krannich \cite{Kra22}, the stable cohomology of $\B \mathrm{Diff}_\del \big( M^{k,k+1}_{g,1} \big)$ in the same range.
A complete calculation of the stable cohomology of block diffeomorphisms would, in the same way, allow to extend this result to degree roughly $2k$.
Moreover, one might hope to extend this calculation even further, for example by using methods recently developed by Krannich--Randal-Williams \cite{KR}.
This is the goal of current work in progress joint with Krannich.

One question one might hope to attack with this approach is to determine the images of the cohomology classes coming from the graph complex under the morphism
\[ \Coho * \big( \B \autbdry \big(M^{k,l}_{g,1} \big); \QQ \big)  \longto  \Coho * \big( \B \mathrm{Diff}_\del \big( M^{k,l}_{g,1} \big); \QQ \big) \]
induced by the forgetful map.
When $k = l$, the stable cohomology of the right hand side has been computed completely in celebrated work of Galatius--Randal-Williams \cite{GR}, but even in that case it is mostly unknown what the graph cohomology classes map to.
Results in this direction have the potential to yield very interesting information about both the cohomology of the diffeomorphism groups as well as the homology of the graph complex and thus the homology of $\Out(F_g)$.

\paragraph{Structure of this paper.}
In \cref{sec:preliminaries}, we fix some conventions, recall various definitions, and prove basic lemmas needed throughout the rest of the paper.
In \cref{sec:directed_gc}, we use coinvariant theory to prove \cref{thm:intro_dgc}.
In \cref{sec:undirected_gc}, we prove \cref{thm:intro_ugc} using a spectral sequence argument.
Finally, \cref{sec:surfaces} contains the topological part, in which we combine everything to obtain \cref{thm:intro_main}.

\paragraph{Acknowledgments.}
First and foremost, I would like to thank my PhD advisor, Alexander Berglund, for suggesting the topic of this article to me, and for guiding me through the process of writing it.
Moreover, I am grateful to him and Tomáš Zeman for providing me with a preliminary version of their paper \cite{BZ}, as well as to Thomas Willwacher for a preliminary version of the computational results of \cite{BW}.
I would also like to thank Fabian Hebestreit and Manuel Krannich for useful discussions, and the latter for making me aware of the article \cite{LS19}.
Last but not least, I would like to thank Alexander Berglund, Manuel Krannich, and the anonymous referee for many useful comments on earlier versions of this paper.

\section{Preliminaries} \label{sec:preliminaries}

In this section we collect a number of basic conventions, notations, definitions, and lemmas which we will use throughout the rest of this paper.

\subsection{Graded vector spaces}

\begin{convention}
  The base field is $\QQ$.
\end{convention}

\begin{notation}
  We denote by $\Vect$ the category of vector spaces and linear maps.
\end{notation}

\begin{convention}
  All gradings are by $\ZZ$.
\end{convention}

\begin{notation}
  We denote by $\GrVect$ the category of graded vector spaces and grading preserving linear maps.
  Sometimes we will implicitly consider an ungraded vector space (such as $\QQ$) as a graded vector space concentrated in degree $0$.
\end{notation}

\begin{notation}
  For $k \in \ZZ$, we denote by $\shift[k](\blank)$ a $k$-fold degree shift.
  For $V$ a graded vector space, the graded vector space $\shift[k] V$ is given by $(\shift[k] V)_n \defeq V_{n-k}$.
  When $k = 1$, we just write $\shift \defeq \shift[1]$.
\end{notation}

\begin{notation} \label{not:graded_symplectic}
  Let $m \in \ZZ$.
  A \emph{graded symplectic form} of degree $-m$ on a graded vector spaces $V$ is a non-degenerate bilinear form of degree $-m$
  \[ \iprod \blank \blank  \colon  V \tensor V  \longto  \QQ \]
  such that $\iprod v w = (-1)^{\deg v \deg w + 1} \iprod w v$ (i.e.\ it is graded anti-symmetric).
  
  We denote by $\Sp[m]$ the category of finite-dimensional graded vector spaces equipped with a graded symplectic form of degree $-m$ with morphisms those linear maps of degree $0$ that preserve the bilinear form.
\end{notation}

\begin{remark}
  Note that a morphism of $\Sp[m]$ is automatically injective.
\end{remark}

\subsection{Double complexes}

\begin{convention}
  A \emph{double complex} is a bigraded vector space $C_{*,*}$ equipped with a differential $d^1$ of bidegree $(-1, 0)$ and a differential $d^2$ of bidegree $(0, -1)$ such that $d^1 \after d^2 = - d^2 \after d^1$.
\end{convention}

\begin{notation}
  Let $(C_{*,*}, d^1, d^2)$ be a double complex.
  We denote by $\Tot C$ its \emph{total complex}, i.e.\ the chain complex with underlying graded vector space
  \[ (\Tot C)_p  \defeq  \Dirsum_{k +l = p} C_{k,l} \]
  and differential $d = d^1 + d^2$.
\end{notation}

\begin{lemma} \label{lemma:double_complex_ss_collapse}
  Let $(C_{*,*}, d^1, d^2)$ be a double complex concentrated in non-negative degrees such that $\Ho * (C_{p,*}, d^2)$ is concentrated in degree $0$ for all $p$.
  Denote by
  \[ f \colon \Tot C \to \big( \Ho 0 (C_{\bullet,*}, d^2), d^1 \big) \]
  the map of chain complexes given by the canonical projections $C_{p, 0} \to \Ho 0 (C_{p,*}, d^2)$ and the trivial map on $C_{p, q}$ for $q > 0$.
  Then $f$ is a quasi-isomorphism.
\end{lemma}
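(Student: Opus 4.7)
The plan is to view the target of $f$ as itself a double complex concentrated in the row $q = 0$, and then run the standard column spectral sequence of a double complex.

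Concretely, let $D_{*,*}$ be the double complex with $D_{p, 0} \defeq \Ho 0 (C_{p,*}, d^2)$ and $D_{p, q} \defeq 0$ for $q \ne 0$, with $d^2 = 0$ and with $d^1$ induced by that of $C$. First I would check that the formula defining $f$ actually assembles into a map of double complexes $C_{*,*} \to D_{*,*}$: compatibility with $d^1$ is immediate from naturality of the projection, and compatibility with $d^2$ reduces, in the only non-trivial bidegree, to the observation that any element of $C_{p, 0}$ which lies in the image of $d^2 \colon C_{p, 1} \to C_{p, 0}$ maps to zero in $\Ho 0 (C_{p, *}, d^2)$. Thus $f$ is induced by a map of double complexes, and $\Tot D = \big( \Ho 0 (C_{\bullet,*}, d^2), d^1 \big)$.

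Next I would filter both $\Tot C$ and $\Tot D$ by the first index, setting $F^p(\Tot C)_n \defeq \Dirsum_{k \ge p,\; k + l = n} C_{k, l}$ and analogously for $D$. Since $C$ (and hence $D$) is concentrated in non-negative bidegrees, in each total degree $n$ these filtrations are bounded, so the associated spectral sequences converge strongly to the homologies of the respective total complexes. The $E_1$-page of the spectral sequence for $C$ is $E_1^{p, q} = \Ho q (C_{p, *}, d^2)$, which by hypothesis is concentrated in $q = 0$ and then equals $\Ho 0 (C_{p, *}, d^2) = D_{p, 0}$; the $E_1$-page for $D$ is tautologically $D_{p, q}$ itself. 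Under these identifications the map induced by $f$ on $E_1$-pages is the identity.

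Applying the comparison theorem for spectral sequences (in its bounded and hence trivially convergent form), we conclude that $f$ induces an isomorphism on the abutments, i.e.\ that $f$ is a quasi-isomorphism. There is no real obstacle here; the only points that require a moment's care are the verification that $f$ is a morphism of double complexes (which is where the hypothesis enters, implicitly, through the vanishing of $D$ outside the row $q = 0$) and the bookkeeping needed to check that the map induced by $f$ on $E_1$ is literally the identity rather than merely an isomorphism.
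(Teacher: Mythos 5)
Your proof is correct and takes essentially the same approach as the paper's: lift $f$ to a map of double complexes (with the target concentrated in the row $q=0$), compare the spectral sequences whose zeroth differential is $d^2$, observe the induced map on $E_1$-pages is an isomorphism, and invoke the comparison theorem. Your write-up just spells out the details that the paper leaves implicit.
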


\begin{proof}
  The map $f$ canonically lifts to a map $g \colon C_{*,*} \to \big( \Ho 0 (C_{\bullet,*}, d^2), d_1 \big)$ of double complexes, where the target is equipped with the double complex structure concentrated in bidegrees $(p, 0)$.
  The map $g$ induces an isomorphism between the $E^1$-pages of the spectral sequences associated to these double complexes whose zeroth differential is given by $d^2$ (see e.g.\ Weibel \cite[§5.6]{Wei}).
  By the Comparison Theorem for spectral sequences, this implies that $f$ is a quasi-isomorphism (see e.g.\ \cite[Theorem~5.2.12]{Wei}).
\end{proof}

\subsection{Group actions}

\begin{definition}
  Let $\cat G$ be a groupoid and $\cat C$ a category.
  A \emph{left $\cat G$-module in $\cat C$} is a functor $M \colon \cat G \to \cat C$, and a \emph{right $\cat G$-module} is a functor $M \colon \opcat{\cat G} \to \cat C$.
\end{definition}

\begin{remark}
  Identifying a group $G$ with its associated one-object groupoid, a left $G$-module in $\cat C$ is an object of $\cat C$ together with a left $G$-action, and a right $G$-module in $\cat C$ is an object of $\cat C$ together with a right $G$-action.
\end{remark}

\begin{convention}
  Unless otherwise stated ``module in $\cat C$'' will mean ``left module in $\cat C$''.
  In particular group actions will be from the left.
  If we omit the category $\cat C$, we mean a (left or right) module in the category $\GrVect$.
\end{convention}

\begin{definition}
  For $\cat G$ a groupoid, $\cat C$ a category, and $M$ a (left) $\cat G$-module in $\cat C$, we will denote by $\opmod M$ the right $\cat G$-module in $\cat C$ given by the composition
  \[ \opcat{\cat G} \xlongto{\iso} \cat G \xlongto{M} \cat C \]
  where the first map is the isomorphism given by the identity on objects and by taking the inverses of morphisms.
\end{definition}

\begin{remark}
  If $\cat G = G$ is a group, this means that $\opmod M$ is the right $G$-module with the opposite action, i.e.\ $\opmod m \act g \defeq \opmod {(\inv g \act m)}$.
\end{remark}

\subsection{Kan extensions} \label{sec:Kan_extensions}

The following lemmas will be useful later.

\begin{lemma} \label{lemma:functors_from_gc}
  Let $G$ be a group and $S$ a $G$-set, i.e.\ a functor $S \colon G \to \Set$.
  Then a functor $X$ from the Grothendieck construction $\gc[G] S$ to some category $\cat C$ can be equivalently described as a family $(X_s)_{s \in S}$ of objects of $\cat C$ equipped with morphisms $g_s \colon X_s \to X_{g \act s}$ for every $g \in G$ and $s \in S$ such that they are functorial in the sense that $h_{g \act s} \after g_s = (h g)_s$.
  
  Moreover, let $H \trianglelefteq G$ be a normal subgroup.
  If $\cat C$ is cocomplete, then there is an isomorphism
  \[ \coinv {\big( \coprod_{s \in S} X_s \big)} H  \iso  \Lan{\gc[G] S \to \quot{G}{H}} X \]
  of $(\quot{G}{H})$-modules in $\cat C$.
  On the left hand side $g \in G$ acts on $\coprod_{s \in S} X_s$ by mapping $X_s$ to $X_{g \act s}$ via $g_s$; the action of $\quot{G}{H}$ is the induced action on the coinvariants.
  On the right hand side we have the left Kan extension of $X$ along the composite $\gc[G] S \to G \to \quot{G}{H}$ of the canonical projection and the quotient map.
\end{lemma}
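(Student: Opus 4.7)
The first part of the statement is essentially an unwinding of definitions. Recall that objects of $\gc[G] S$ are the elements $s \in S$, morphisms from $s$ to $t$ are elements $g \in G$ with $g \act s = t$, and composition is induced by the group operation of $G$. A functor $X \colon \gc[G] S \to \cat C$ therefore amounts exactly to choosing objects $X_s \in \cat C$ together with morphisms $g_s \colon X_s \to X_{g \act s}$ for each $g \in G$ and $s \in S$, subject to the compatibility $h_{g \act s} \after g_s = (h g)_s$ expressing functoriality on composition.

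For the second part, my plan is to verify the universal property of the left Kan extension directly. Write $p \colon \gc[G] S \to \quot{G}{H}$ for the composite functor under consideration. I would show that, naturally in any $\quot{G}{H}$-module $Y$ in $\cat C$, morphisms of $\quot{G}{H}$-modules $\coinv{\big(\coprod_{s \in S} X_s\big)}{H} \to Y$ are in bijection with natural transformations $X \to p^* Y$. The key observation is that both sides can be canonically identified with families of morphisms $(f_s \colon X_s \to Y)_{s \in S}$ satisfying $f_{g \act s} \after g_s = p(g) \act f_s$ for all $g \in G$ and $s \in S$.

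For natural transformations $X \to p^* Y$, this is nothing but the naturality condition spelled out using the first part of the lemma. For maps out of the coinvariants (which exist because $\cat C$ is cocomplete), a morphism $\coprod_{s \in S} X_s \to Y$ is by the universal property of the coproduct exactly a family $(f_s)_{s \in S}$; demanding it to be $G$-equivariant, where $Y$ is regarded as a $G$-module via the quotient map $G \to \quot{G}{H}$, produces precisely the stated compatibility. Since this action on $Y$ factors through $\quot{G}{H}$, any such $G$-equivariant map is automatically $H$-invariant, and hence descends uniquely to a $\quot{G}{H}$-equivariant map from the coinvariants; conversely, every such map arises in this way. The only real subtlety is the book-keeping of which side of the action each group element is acting on, so that the two compatibility conditions line up on the nose; there is no deeper obstacle.
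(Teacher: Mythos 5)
Your first part matches the paper's treatment (a definitional unwinding). For the second part you take a genuinely different route: you verify the universal property of the left Kan extension along the composite $\gc[G] S \to \quot{G}{H}$ directly, by showing that both $\quot{G}{H}$-equivariant maps $\coinv{(\coprod_{s} X_s)}{H} \to Y$ and natural transformations $X \to p^*Y$ unwind to the same data of families $(f_s \colon X_s \to Y)$ satisfying $f_{g\act s} \after g_s = p(g) \act f_s$, naturally in $Y$. The paper instead factors the argument through an intermediate Kan extension: it observes that left Kan extending $X$ along the projection $\gc[G] S \to G$ produces the $G$-module $\coprod_s X_s$, that taking $H$-coinvariants is left Kan extension along $G \to \quot{G}{H}$, and then invokes the fact that left Kan extensions compose. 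Your approach is more elementary and self-contained (one does not need to name or prove composability of Kan extensions), at the cost of more explicit bookkeeping with the equivariance conditions; the paper's approach is slicker and exposes the intermediate $G$-module $\coprod_s X_s$, which is convenient for reuse. Both are correct; the bijection you describe is natural in $Y$ on both sides essentially by inspection, and the factoring through $H$-coinvariants works exactly because the $G$-action on $Y$ (pulled back along $G \to \quot{G}{H}$) is trivial on $H$.
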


\begin{proof}
  The first claim follows directly from the definition of the Grothendieck construction.
  For the second claim note that the $G$-module $\coprod_{s \in S} X_s$ is isomorphic to the left Kan extension of $X$ along the canonical projection $\gc[G] S \to G$.
  Taking $H$-coinvariants of this corresponds to left Kan extending further along the quotient map $G \to \quot{G}{H}$.
  Since Kan extensions compose, the result is isomorphic to the left Kan extension of $X$ along the composite $\gc[G] S \to \quot{G}{H}$.
\end{proof}

We will now provide an explicit identification of the left Kan extension in the lemma above.
This takes the form of the following (more general) lemmas.

\begin{lemma} \label{lemma:groupoid_comma_cat}
  Let $\cat G$ be a groupoid, $H$ a group, and $F \colon \cat G \to H$ a functor.
  Denote by $\cat G^F \subseteq \cat G$ the wide subcategory of those morphisms that $F$ maps to the neutral element $e_H \in H$.
  Also denote by $*$ the unique object of $H$ considered as a groupoid, and by $F \comma *$ the corresponding comma category.
  Then there is a fully faithful functor
  \[ \Phi_F \colon \cat G^F  \longto  F \comma * \]
  given by sending an object $G \in \cat G^F$ to $(G, e_H \colon F(G) = * \to *)$ and a morphism $g \colon G \to G'$ to itself.
  
  Moreover assume that, for all $G \in \cat G$ and $h \in H$, there exists a morphism $g_{G,h} \colon G \to K_{G,h}$ in $\cat G$ such that $F(g_{G,h}) = h$.
  Then $\Phi_F$ is an equivalence of categories with an inverse (up to natural isomorphism) given by the functor
  \[ \Psi_F  \colon  F \comma *  \longto  \cat G^F \]
  given on objects by $(G, h) \mapsto K_{G,h}$ and on morphisms by $(g \colon (G, h) \to (G', h')) \mapsto g_{G',h'} g \inv{g_{G,h}}$.
  (Note that $\Psi_F$ depends on the choice of $g_{G,h}$ for all $G \in \cat G$ and $h \in H$.)
  The natural isomorphism $\id[F \comma *] \to \Phi_F \after \Psi_F$ is, at an object $(G, h)$, given by the map $g_{G,h} \colon G \to K_{G,h}$.
\end{lemma}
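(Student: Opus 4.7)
The plan is to unpack the comma category $F \comma *$ concretely and then do straightforward bookkeeping. An object of $F \comma *$ is a pair $(G, h)$ with $G \in \cat G$ and $h \in H$ (an arrow $F(G) = * \to *$ in $H$), and a morphism $(G, h) \to (G', h')$ is a morphism $g \colon G \to G'$ in $\cat G$ satisfying $h' \cdot F(g) = h$, equivalently $F(g) = \inv{h'} h$. In particular, a morphism $(G, e_H) \to (G', e_H)$ is precisely a morphism $g \colon G \to G'$ in $\cat G$ with $F(g) = e_H$, i.e.\ a morphism in $\cat G^F$. This immediately shows that the assignment $\Phi_F$ is well-defined on morphisms and is fully faithful, since the hom-sets on both sides consist of the same data.

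Now I would turn to the second part. Assuming the existence of the $g_{G, h}$, I would first verify that $\Psi_F$ is a well-defined functor. For well-definedness on morphisms, one computes $F(g_{G', h'} g \inv{g_{G, h}}) = h' \cdot \inv{h'} h \cdot \inv h = e_H$, so $\Psi_F(g)$ does lie in $\cat G^F$. For functoriality, given composable morphisms $(G, h) \xrightarrow{g} (G', h') \xrightarrow{g'} (G'', h'')$, the two middle copies of $g_{G', h'}$ in $\Psi_F(g') \Psi_F(g)$ cancel, matching $\Psi_F(g' g)$ on the nose; preservation of identities is immediate.

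Next, I would check that the proposed natural isomorphism works. At an object $(G, h)$, the morphism $g_{G, h} \colon G \to K_{G, h}$ in $\cat G$ is a map $(G, h) \to (K_{G, h}, e_H) = \Phi_F \Psi_F(G, h)$ in $F \comma *$ because $e_H \cdot F(g_{G, h}) = h$; it is an isomorphism since $\cat G$ is a groupoid. Naturality at a morphism $g \colon (G, h) \to (G', h')$ reduces to the identity $g_{G', h'} \cdot g = (g_{G', h'} g \inv{g_{G, h}}) \cdot g_{G, h}$ in $\cat G$, which is obvious. This natural isomorphism witnesses essential surjectivity of $\Phi_F$, so combined with the already-established fully faithfulness, $\Phi_F$ is an equivalence of categories, with $\Psi_F$ then automatically being a quasi-inverse.

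There is really no hard step here; the whole argument is a matter of carefully tracking what a morphism in the comma category is and checking that the natural candidate maps do the right thing. The only mild subtlety is making sure the signs of the exponents of $h$ are correct when verifying the image of $\Psi_F$ lands in $\cat G^F$ and when verifying naturality, so I would write those equalities out explicitly once to avoid mistakes.
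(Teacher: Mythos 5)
Your proof is correct and is precisely the detailed unpacking that the paper's one-line proof ("This follows easily from the definitions.") gestures at: you identify morphisms in $F \comma *$ concretely, check that $\Phi_F$ is fully faithful, verify that $\Psi_F$ is a well-defined functor landing in $\cat G^F$, and confirm that the stated unit map $g_{G,h}$ is a natural isomorphism, from which the equivalence follows. There is nothing to add.
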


\begin{proof}
  This follows easily from the definitions.
\end{proof}

\begin{lemma} \label{lemma:groupoid_Kan_ext}
  In the situation of the second part of \cref{lemma:groupoid_comma_cat}, let $\cat C$ be a cocomplete category and $X \colon \cat G \to \cat C$ a functor.
  Furthermore, for an element $h \in H$, let $A_h$ denote the functor $\cat G^F \to \cat G^F$ that is given by $G \mapsto K_{G,h}$ on objects and that sends a morphism $g \colon G \to G'$ to $g_{G',h} g \inv{g_{G,h}}$.
  Then there is an isomorphism
  \[ \Lan{F \colon \cat G \to H} X  \iso  \colim[s]{\cat G^F} (X \after \inc[\cat G^F]) \]
  of $H$-modules in $\cat C$.
  Here an element $h \in H$ acts on the right hand side via the functor $A_h$ and the natural transformation $\inc[\cat G^F] \to {\inc[\cat G^F]} \after A_h$ given, at an object $G \in \cat G^F$, by $g_{G,h} \colon G \to K_{G,h}$.
\end{lemma}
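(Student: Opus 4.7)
The plan is to combine the pointwise formula for left Kan extensions with the equivalence of categories supplied by \cref{lemma:groupoid_comma_cat}. By the pointwise formula, the value of $\Lan{F} X$ at the unique object $*$ of $H$ is the colimit $\colim{F \comma *} (X \after \pi)$, where $\pi \colon F \comma * \to \cat G$ is the canonical projection. For each $h \in H$, the endofunctor $\tilde h \colon F \comma * \to F \comma *$ defined on objects by $(G, k) \mapsto (G, h k)$ (and by the identity on underlying morphisms in $\cat G$) satisfies $\pi \after \tilde h = \pi$, so the action of $h$ on the colimit is induced by $\tilde h$ together with the identity natural transformation $X \after \pi \to X \after \pi \after \tilde h$.

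Next, I would invoke the equivalence $\Phi_F \colon \cat G^F \to F \comma *$ from \cref{lemma:groupoid_comma_cat}. Since $\pi \after \Phi_F = \inc[\cat G^F]$ holds strictly, pulling back along $\Phi_F$ yields an isomorphism
\[ \colim{F \comma *} (X \after \pi)  \iso  \colim{\cat G^F} (X \after \inc[\cat G^F]) \]
in $\cat C$, which is the desired identification of underlying objects.

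The final step is to transport the $H$-action across this isomorphism. A direct calculation using the explicit inverse $\Psi_F$ and the formulas recorded in \cref{lemma:groupoid_comma_cat} shows that the conjugate endofunctor $\Psi_F \after \tilde h \after \Phi_F \colon \cat G^F \to \cat G^F$ agrees with $A_h$ on both objects and morphisms. The identity natural transformation $X \after \pi \to X \after \pi \after \tilde h$ is then transported, via the natural isomorphism $\id[F \comma *] \to \Phi_F \after \Psi_F$ whose component at $(G, h)$ is $g_{G, h}$, to a natural transformation $X \after \inc[\cat G^F] \to X \after \inc[\cat G^F] \after A_h$ whose component at $G$ is $X(g_{G, h})$. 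This matches the description in the statement.

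The only slightly delicate point is this last bookkeeping step: each individual identification is routine, but the interplay of the equivalence $\Phi_F$, its inverse $\Psi_F$, and the natural isomorphism between them must be tracked carefully, so that both the endofunctor and the accompanying natural transformation are confirmed to transport exactly as claimed. I would organize the verification around the whiskering of this natural isomorphism by $\tilde h$ and $\Phi_F$ to keep the coherences manageable.
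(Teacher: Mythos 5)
Your proposal is correct and follows essentially the same route as the paper: use the pointwise colimit formula for $\Lan{F}X$, describe the $H$-action via the post-composition endofunctors of $F \comma *$, and transport everything across the equivalence $\Phi_F$ from \cref{lemma:groupoid_comma_cat}. The explicit conjugation $\Psi_F \after \tilde h \after \Phi_F = A_h$ and the whiskering bookkeeping you supply are precisely what the paper's terse final sentence leaves to the reader.
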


\begin{proof}
  The left Kan extension in the statement is, at the unique object $*$ of $H$, isomorphic to $\colim{F \comma *} (X \after \pr)$, where $\pr \colon F \comma * \to \cat G$ denotes the projection.
  The action of an element $h \in H$ on this object is induced by the functor $F \comma * \to F \comma *$ given by $(G, h') \mapsto (G, h h')$ (note that this functor commutes with the projection).
  That this is isomorphic, as an $H$-module, to the description given in the statement follows from \cref{lemma:groupoid_comma_cat}.
\end{proof}

\subsection{Symmetric sequences}

\begin{definition}
  We denote by $\oSet$ the category with objects finite sets equipped with a linear order and morphisms the maps of sets (not necessarily respecting the order), by $\oBij \defeq \Core \oSet$ its maximal subgroupoid, and by $\PermGrpd$ the skeleton of $\oBij$ spanned by the objects $\finset n \defeq \set {1 < \dots < n}$ for $n \in \NN$.
\end{definition}

\begin{remark}
  There exist canonical equivalences of categories $\oSet \to \FinSet$, $\oBij \to \Core \FinSet$, and $\PermGrpd \to \Core \FinSet$, though neither of them admit a canonical inverse equivalence.
  The inclusion $\PermGrpd \to \oBij$, however, is an equivalence that does admit a canonical inverse equivalence, namely by identifying an object $S \in \oBij$ with an object in $\PermGrpd$ via the unique map that respects the linear order.
\end{remark}

\begin{definition}
  Let $\cat V$ be a symmetric monoidal category and $V \in \cat V$.
  Then there is a functor $V^{\tensor \blank} \colon \PermGrpd \to \cat V$ given on objects by $\finset n \mapsto V^{\tensor n}$ and defined on morphisms via the symmetrizer isomorphism.
  Pulling this back via the canonical equivalence $\oBij \to \PermGrpd$ we obtain a functor $\oBij \to \cat V$ which we also denote by $V^{\tensor \blank}$.
  
  Similarly, for $S \in \oBij$ and $(V_s)_{s \in S}$ a family of objects of $\cat V$, we write $\Tensor_{s \in S} V_s$ for $V_{s(1)} \tensor \dots \tensor V_{s(\card S)}$, where $s \colon \finset{\card S} \to S$ is the unique order-preserving bijection.
  We will also use the notation $\Tensor_{s \in S} v_s$ to denote elementary tensors of this tensor product.
\end{definition}

\begin{definition}
  For $\bijmod M$ a $\oBij$-module and $a \colon S \to T$ a morphism of $\oSet$, we write
  \[ \bijmod M(a)  \defeq  \Tensor_{t \in T} \bijmod M \big( \inv a(t) \big) \]
  where $\inv a(t) \subseteq S$ has the linear order induced from the one of $S$.
  This construction is functorial in $a \in \oBij \comma \oBij$, with the comma category being taken over $\oSet$.
\end{definition}

\begin{definition}
  Let $\bijmod P$ and $\bijmod Q$ be two $\oBij$-modules.
  Then we can form their \emph{composition product}, i.e.\ the $\oBij$-module $\bijmod P \compprod \bijmod Q$ given on objects by
  \[ S  \longmapsto  \colim{(a \colon S \to T) \in S \comma \oBij} \big( \bijmod P(T) \otimes \bijmod Q(a) \big) \]
  where the slice category $S \comma \oBij$ is taken over $\oSet$.
  A morphism $f \colon S \to S'$ of $\oBij$ acts via the map
  \[ \colim{(a \colon S \to T) \in S \comma \oBij} \big( \bijmod P(T) \otimes \bijmod Q(a) \big)  \longto  \colim{(a \colon S' \to T) \in S' \comma \oBij} \big( \bijmod P(T) \otimes \bijmod Q(a) \big) \]
  induced by the functor $\inv f \comma \oBij \colon S \comma \oBij \to S' \comma \oBij$ as well as the natural transformation $\bijmod P(T) \otimes \bijmod Q(a) \to \bijmod P(T) \otimes \bijmod Q(a \after \inv f)$ given by the identity of $\bijmod P(T)$ and the map $\bijmod Q(a) \to \bijmod Q(a \after \inv f)$ induced by the morphism $(f, \id[T]) \colon a \to a \after \inv f$ of $\oBij \comma \oBij$.
\end{definition}

\subsection{Schur functors}

\begin{notation}
  Let $\bijmod M$ a $\oBij$-module.
  We define a functor $\Schur{\bijmod M}{\blank} \colon \GrVect \to \GrVect$ by
  \[ V  \longmapsto  \Dirsum_{k \in \NN} \big( \opmod{\bijmod M(\finset k)} \tensor[\Symm{\finset k}] V^{\tensor \finset k} \big) \]
  and call it the \emph{Schur functor} associated to $\bijmod M$.
\end{notation}

\begin{remark} \label{rem:Schur_functor}
  There is a canonical, and natural in $V$, isomorphism
  \[ \Schur{\bijmod M}{V}  \iso  \colim{\finset k \in \PermGrpd} \big( \bijmod M(\finset k) \tensor V^{\tensor \finset k} \big) \]
  where $\PermGrpd$ acts diagonally on the tensor product.
  We can also replace $\PermGrpd$ by $\oBij$ since they are canonically equivalent.
\end{remark}

\begin{lemma} \label{lemma:compprod_and_Schur}
  Let $\bijmod P$ and $\bijmod Q$ be $\oBij$-modules.
  Then there is a natural isomorphism
  \[ \Schur {(\bijmod P \compprod \bijmod Q)} \blank  \xlongto{\iso}  \Schur {\bijmod P} {\Schur {\bijmod Q} \blank} \]
  with an explicit description as given in the proof.
\end{lemma}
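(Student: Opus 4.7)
The plan is to unpack both sides of the claimed isomorphism using the colimit description of the Schur functor from \cref{rem:Schur_functor} and the definition of the composition product, and then to identify the resulting iterated colimits by means of a canonical equivalence of indexing categories.

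More precisely, I first rewrite the left hand side as
\[ \Schur{(\bijmod P \compprod \bijmod Q)}{V} \;\iso\; \colim{S \in \oBij} \colim{(a \colon S \to T) \in S \comma \oBij} \Big( \bijmod P(T) \tensor \Tensor_{t \in T} \bijmod Q\big(\inv a(t)\big) \tensor V^{\tensor S} \Big). \]
The crucial point, which unlocks the comparison, is the canonical identification
\[ V^{\tensor S} \;\iso\; \Tensor_{t \in T} V^{\tensor \inv a(t)} \]
coming from the fact that $a$ realizes $S$ as the (ordered) coproduct $\coprod_{t \in T} \inv a(t)$. After applying this and using the symmetry of the tensor product to group the $\bijmod Q$- and $V$-factors by $t \in T$, the expression becomes a colimit of terms of the shape $\bijmod P(T) \tensor \bigotimes_{t \in T} \big(\bijmod Q(U_t) \tensor V^{\tensor U_t}\big)$, where we set $U_t \defeq \inv a(t)$.

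For the right hand side I unpack $\Schur{\bijmod P}{\Schur{\bijmod Q}{V}}$ similarly and use that the tensor product commutes with colimits in each variable to move the inner colimits outside:
\[ \Schur{\bijmod P}{\Schur{\bijmod Q}{V}} \;\iso\; \colim{T \in \oBij} \colim{(U_t)_{t \in T}} \Big( \bijmod P(T) \tensor \Tensor_{t \in T} \big( \bijmod Q(U_t) \tensor V^{\tensor U_t} \big) \Big), \]
where the inner colimit runs over $T$-indexed families of objects of $\oBij$. The desired isomorphism is then induced by the functor sending a family $(T, (U_t)_{t \in T})$ on the right hand side to the pair $(S, a \colon S \to T)$ on the left hand side, where $S \defeq \coprod_{t \in T} U_t$ is equipped with the lexicographic order induced by the orders on $T$ and on each $U_t$, and $a$ is the projection sending each $U_t$ to $t$. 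This is readily checked to be an equivalence of the two indexing categories, with (pseudo-)inverse $(S, a) \mapsto (T, (\inv a(t))_{t \in T})$.

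I expect the only real obstacle to be bookkeeping: writing down the equivalence of indexing categories at the level of morphisms (in particular verifying that the various reorderings of factors in $V^{\tensor S}$ and $\bigotimes_t V^{\tensor U_t}$ are compatible with the symmetric group actions coming from morphisms of $\oBij$) and checking naturality in $V$. There is nothing deep here beyond careful tracking of signs and orders, but it must be done with the conventions of \cref{not:graded_symplectic} and the definition of the composition product in mind so that the isomorphism is actually well defined on the colimit and not just on the indexing diagram.
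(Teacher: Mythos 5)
Your proposal is correct and takes essentially the same approach as the paper: both unpack the two sides via the colimit description of Schur functors (\cref{rem:Schur_functor}), both hinge on the decomposition $V^{\tensor S} \iso \Tensor_{t\in T} V^{\tensor \inv a(t)}$ given by $a$, and both identify the resulting data up to a Koszul sign from reshuffling tensor factors. The paper simply writes down the resulting element-level formula directly (making the sign $\epsilon$ explicit), whereas you phrase the same identification as an equivalence of Grothendieck-construction indexing categories; these are two descriptions of one and the same isomorphism.
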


\begin{proof}
  We define the isomorphism by sending the element of
  \[ \Schur {(\bijmod P \compprod \bijmod Q)} V = \colim{S \in \oBij} \Big( \colim{(a \colon S \to T) \in S \comma \oBij} \bijmod P(T) \otimes \bijmod Q(a) \Big) \tensor V^{\tensor S} \]
  represented by $S \in \oBij$ and $(a \colon S \to T) \in S \comma \oBij$ as well as the elements
  \[ p \tensor \eTensor_{t \in T} q_t \in \bijmod P(T) \tensor \bijmod Q(a)  \qquad\text{and}\qquad  \eTensor_{s \in S} v_s \in V^{\tensor S} \]
  to $\epsilon \in \set{\pm 1}$ times the element of
  \[ \Schur {\bijmod P} {\Schur {\bijmod Q} \blank}  =  \colim{T \in \oBij}\, \bijmod P(T) \tensor \Big( \colim{F \in \oBij} \bijmod Q(F) \tensor V^{\tensor F} \Big)^{\tensor T} \]
  represented by $T \in \oBij$ and $(F_t \defeq \inv a(t) \in \oBij)_{t \in T}$ as well as the elements
  \[ p \in \bijmod P(T)  \qquad\text{and}\qquad  q_t \tensor \eTensor_{s \in F_t} v_s \in \bijmod Q(F_t) \tensor V^{\tensor F_t} \]
  where $\epsilon$ is the sign incurred by permuting the two expressions
  \[ p \tensor \eTensor_{t \in T} q_t \tensor \eTensor_{s \in S} v_s  \quad\text{and}\quad  p \tensor \eTensor_{t \in T} \big( q_t \tensor \eTensor_{s \in F_t} v_s \big) \]
  into each other.
\end{proof}

\subsection{Differential graded Lie algebras}

\begin{definition} \label{def:pos_trunc}
  Let $L$ be a differential graded Lie algebra.
  Its \emph{positive truncation} is the differential graded Lie subalgebra $\trunc L \subseteq L$ given by
  \[ (\trunc L)_i  \defeq  \begin{cases} L_i, & \text{for } i \ge 2 \\ \ker (\delta \colon L_1 \to L_0), & \text{for } i = 1 \\ 0, & \text{for } i \le 0 \end{cases} \]
  where $\delta$ is the differential of $L$.
\end{definition}

\begin{definition} \label{not:CEchains}
  Let $L$ be a differential graded Lie algebra.
  We denote by $\CEchains * (L)$ its \emph{Chevalley--Eilenberg complex}, i.e.\ the underlying graded vector space of the free graded commutative algebra $\freegca(\shift L)$, equipped with the differential $d = d_0 + d_1$, where
  \begin{align*}
  d_0(\shift x_1 \wedge \dots \wedge \shift x_k) &\defeq \sum_{i = 1}^{k} (-1)^{b_i} \shift x_1 \wedge \dots \wedge \shift \delta(x_i) \wedge \dots \wedge \shift x_k \\
  b_i &\defeq 1 + \sum_{j = 1}^{i - 1} \deg{\shift x_j} \\
  d_1(\shift x_1 \wedge \dots \wedge \shift x_k) &\defeq \sum_{i < j} (-1)^{b_{i,j}} \shift \liebr{x_i}{x_j} \wedge \shift x_1 \wedge \dots \wedge \drop{\shift x_i} \wedge \dots \wedge \drop{\shift x_j} \wedge \dots \wedge \shift x_k \\
  b_{i,j} &\defeq \deg{\shift x_i} (1 + \deg{\shift x_j}) + \deg{\shift x_i} \sum_{l = 1}^{i-1} \deg{\shift x_l} + \deg{\shift x_j} \sum_{l = 1}^{j-1} \deg{\shift x_l}
  \end{align*}
  where $\delta$ is the differential of $L$.
  
  The Chevalley--Eilenberg complex comes equipped with the structure of a cocommutative differential graded coalgebra.
  Its counit $\epsilon \colon \CEchains * (L) \to \QQ$ is given by $1$ on the empty wedge and $0$ on all higher wedges.
  The comultiplication $\Delta \colon \CEchains * (L) \to \CEchains * (L) \tensor \CEchains * (L)$ is given by
  \[ \Delta(\shift x_1 \wedge \dots \wedge \shift x_k)  \defeq  \sum_{A \subseteq \finset k} \epsilon(A) (\Wedge_{a \in A} \shift x_a) \tensor (\Wedge_{b \in \finset k \setminus A} \shift x_b) \]
  where the sum runs over all subsets $A$ of the set $\finset k = \set{1, \dots, k}$.
  Here $\epsilon(A) \in \set{\pm 1}$ is the sign incurred by permuting
  $\shift x_1 \tensor \dots \tensor \shift x_k$ into $\eTensor_{a \in A} \shift x_a \tensor \eTensor_{b \in \finset k \setminus A} \shift x_b$.
  
  Moreover, we write $\CEcoho * (L)$ for the \emph{Chevalley--Eilenberg cohomology} of $L$, i.e.\ the cohomology of the linear dual of $\CEchains * (L)$, equipped with its graded commutative algebra structure.
\end{definition}

\begin{lemma} \label{lemma:CEchains_as_Schur}
  Let $\freegca \shift$ denote the $\oBij$-module given by $\freegca \shift(S) = (\shift \basefield)^{\tensor S}$.
  Then there is a canonical isomorphism of graded vector spaces, natural in $V \in \GrVect$,
  \begin{align*}
  \Schur {(\freegca \shift)} V  &\xlongto{\iso} \freegca (\shift V) \\
  \eqcl {(\shift 1)^{\tensor k} \tensor (v_1 \tensor \dots \tensor v_k)}  &\longmapsto (-1)^c \shift v_1 \wedge \dots \wedge \shift v_k
  \end{align*}
  where $c \defeq \sum_{i = 1}^{k} \deg{v_i} (k - i)$.
\end{lemma}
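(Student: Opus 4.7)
The plan is to verify that the displayed formula gives a well-defined linear isomorphism on each $k$\nobreakdash-th summand of the Schur functor, and then assemble over $k$. Using \cref{rem:Schur_functor}, I would first rewrite
\[ \Schur{(\freegca\shift)}{V} \iso \Dirsum_{k \in \NN} \coinv{\big((\shift\QQ)^{\tensor k} \tensor V^{\tensor k}\big)}{\Symm k}, \]
where $\Symm k$ acts diagonally via the Koszul symmetrizer. The formula in the statement defines a degree-preserving linear map out of $(\shift\QQ)^{\tensor k} \tensor V^{\tensor k}$, and the main task is to check that it descends to the coinvariants.

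Since adjacent transpositions generate $\Symm k$, it suffices to verify invariance under each swap $\tau_i = (i,i+1)$. The swap acts on the source by $-(-1)^{\deg v_i \deg v_{i+1}}$: a factor $-1$ from interchanging two copies of the odd-degree element $\shift 1$ in $(\shift\QQ)^{\tensor k}$, together with the Koszul sign $(-1)^{\deg v_i \deg v_{i+1}}$ from transposing $v_i$ and $v_{i+1}$ in $V^{\tensor k}$. On the target side, the graded-commutativity relation $\shift v_i \wedge \shift v_{i+1} = (-1)^{(1+\deg v_i)(1+\deg v_{i+1})} \shift v_{i+1} \wedge \shift v_i$ in $\freegca(\shift V)$, together with the fact that $c = \sum_j \deg(v_j)(k-j)$ changes by $\deg v_i - \deg v_{i+1}$ under the swap of the $i$\nobreakdash-th and $(i+1)$\nobreakdash-th arguments, produces modulo $2$ exactly the sign needed for consistency.

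An inverse is obtained by reading the formula backwards, sending $\shift v_1 \wedge \dots \wedge \shift v_k$ to $(-1)^c \eqcl{(\shift 1)^{\tensor k} \tensor (v_1 \tensor \dots \tensor v_k)}$; that this respects the graded-commutativity relations in $\freegca(\shift V)$ is the same sign computation. The two maps are manifestly mutually inverse from their defining formulas, and naturality in $V$ is automatic since linear maps of graded vector spaces commute with every ingredient of the construction. The only genuinely delicate point is the sign bookkeeping described above; the pitfalls to avoid are conflating $\deg v$ with $\deg(\shift v) = \deg v + 1$ and forgetting that interchanging two copies of $\shift 1$ contributes a sign of $-1$ rather than $+1$.
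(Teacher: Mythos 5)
Your proof is correct and takes essentially the same approach as the paper: the paper's one-line argument (lift to an isomorphism $(\shift\basefield)^{\tensor k}\tensor V^{\tensor k}\to(\shift V)^{\tensor k}$ compatible with the $\Symm k$-actions and pass to quotients) is exactly the equivariance check you spell out via adjacent transpositions. Your explicit sign verification is what the paper's phrase ``compatible with the $\Symm k$-actions'' abbreviates.
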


\begin{proof}
  The map lifts to an isomorphism $(\shift \basefield)^{\tensor k} \tensor V^{\tensor k}  \to  (\shift V)^{\tensor k}$ that is compatible with the $\Symm k$-actions.
  Passing to quotients again, we obtain the desired statement.
\end{proof}

\subsection{The convolution Lie algebra}

Recall that a \emph{cyclic operad} is an operad $\operad C$ such that the right action of $\Symm n$ on $\operad C(n)$ extends to a right action of $\Symm {n+1}$ in a way compatible with the composition operations (see Getzler--Kapranov \cite[§2]{GK95}).
Unless explicitly stated otherwise, we will always work in the category of graded vector spaces.

\begin{notation}
  For a cyclic operad $\operad C$, we write $\cyc{\operad C}{n}$ for $\operad C(n-1)$ equipped with its right $\Symm n$-action.
  We consider a cyclic operad as a $\PermGrpd$-module via (the opposite of) this extended right action.
  (Note that this is different from the $\PermGrpd$-module associated to the underlying operad.)
  In particular the Schur functor associated to $\operad C$ is $\Schur{\operad C}{V} = \Dirsum_{n \ge 1} \cyc{\operad C}{n} \tensor[\Symm n] V^{\tensor n}$.
\end{notation}

\begin{definition} \label{not:cyclic_operad_comp}
  For a cyclic operad $\operad C$, natural numbers $k, l \in \NNge{1}$, operations $p \in \cyc{\operad C} {k}$ and $q \in \cyc {\operad C} {l}$, and $1 \le i \le k$ and $1 \le j \le l$, we set
  \[ p \opcomp[i]{j} q  \defeq  p \opcomp{i} (q \act \rot[l]^j)  \in  \cyc {\operad C} {k+l-2} \]
  where $\rot[n] \defeq (1 2 \dots n) \in \Symm n$ is the cyclic permutation and $\opcomp{i}$ denotes the $i$-th partial composition of the underlying operad of $\operad C$.
  In the case that $i = k$, we set $p \opcomp{k} q \defeq ((p \act \rot[k]) \opcomp{k-1} q) \act \inv{\rot[k+l-2]}$.
\end{definition}

\begin{remark} \label{rem:cyclic_operad_comp}
  Via the canonical equivalence $\oBij \to \PermGrpd$, the operations $\opcomp[i]{j}$ of \cref{not:cyclic_operad_comp} generalize to operations
  \[ \opcomp[s]{t}  \colon  \cyc {\operad C} S \tensor \cyc {\operad C} T  \longto  \cyc {\operad C} {S \opcomp[s]{t} T} \]
  where $S, T \in \oBij$, and $s \in S$ and $t \in T$.
  Here
  \[ S \opcomp[s]{t} T  \defeq  (S \setminus \set s) \amalg (T \setminus \set t) \]
  equipped with the linear order such that
  \[ S_{< s} < T_{> t} < T_{< t} < S_{> s} \]
  with the linear order on each block inherited from $S$ respectively $T$.
\end{remark}

\begin{definition}
  Let $V \in \Sp[m]$, $k, l \in \NN$, $1 \le i \le k$, and $1 \le j \le l$.
  We denote by
  \[ \opcomp[i]{j} \colon V^{\tensor k} \tensor V^{\tensor l} \to V^{\tensor k + l - 2} \]
  the map given by
  \begin{align*}
  (v_1 \tensor \dots \tensor v_k) \opcomp[i]{j} (w_1 \tensor \dots \tensor w_l)  &\defeq  {(-1)}^c \mult v_{1,i-1} \tensor w_{j+1,l} \tensor w_{1,j-1} \tensor v_{i+1,k} \iprod {w_j} {v_i} \\
  c  &\defeq  (\deg {v_{i,k}} - m) (\deg {w_{1,l}} - m) + \deg {w_{1,j}} \deg {w_{j+1,l}} + 1
  \end{align*}
  where, for $a \le b$, we set $v_{a,b} \defeq v_a \tensor v_{a+1} \tensor \dots \tensor v_b$ and analogously for $w_{a,b}$.
\end{definition}

\begin{lemma} \label{lemma:Schur_cyclic_Lie}
  Let $\operad C$ be a cyclic operad in graded vector spaces that is concentrated in even degrees, and let $V \in \Sp[m]$.
  Then $\Schur {(\shift[-m] \operad C)} {V}$ becomes a graded Lie algebra by setting
  \[ \liebr {\shift[-m] \xi \tensor v} {\shift[-m] \zeta \tensor w}  \defeq  \sum_{i \in \finset k, j \in \finset l} \shift[-m] (\xi \opcomp[i]{j} \zeta) \tensor (v \opcomp[i]{j} w) \]
  for $\xi \in \cyc {\operad C} {k}$, $v \in V^{\tensor k}$, $\zeta \in \cyc {\operad C} {l}$, and $w \in V^{\tensor l}$.
  This is functorial, i.e.\ the restricted functor $\restrict {\Schur {(\shift[-m] \operad C)} {\blank}} {\Sp[m]}  \colon  \Sp[m] \to \GrVect$ lifts to a functor to graded Lie algebras, which we also denote by $\Schur {(\shift[-m] \operad C)} {\blank}$.
\end{lemma}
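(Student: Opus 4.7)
The plan is to verify directly that the displayed formula defines a well-defined graded Lie algebra structure, and then to read off functoriality from the fact that morphisms of $\Sp[m]$ preserve the symplectic pairing. First I would reformulate the bracket in coordinate-free form: by \cref{rem:cyclic_operad_comp} the cyclic operad composition lifts to maps $\opcomp[s]{t} \colon \cyc{\operad C}{S} \tensor \cyc{\operad C}{T} \to \cyc{\operad C}{S \opcomp[s]{t} T}$ indexed by pairs $(s, t) \in S \times T$, and the symplectic pairing on $V$ yields analogous maps on tensor powers of $V$. Each is equivariant under the permutations of $S$ and $T$ fixing $s$ respectively $t$, so summing over all pairs $(s, t)$ produces a map equivariant for the full symmetric group actions, which descends to a well-defined bracket on $\Schur{(\shift[-m]\operad C)}{V}$.

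Next I would verify graded antisymmetry. The hypothesis that $\operad C$ is concentrated in even degrees is essential here, since it guarantees that all sign contributions come from the tensor factors in $V$, the shift $\shift[-m]$, and the graded antisymmetry $\iprod{w_j}{v_i} = (-1)^{\deg{v_i} \deg{w_j} + 1} \iprod{v_i}{w_j}$ of the pairing. Swapping the two arguments of the bracket and reindexing the sum by swapping the roles of $i$ and $j$, together with the cyclic action on $\cyc{\operad C}{k}$ used to relate $\zeta \opcomp[j]{i} \xi$ with $\xi \opcomp[i]{j} \zeta$, should yield exactly the expected sign in $\Schur{(\shift[-m]\operad C)}{V}$.

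For the Jacobi identity I would expand the graded Jacobi sum and view each term as plugging three operations of $\operad C$ together along two ``bridges'' formed by the symplectic pairing. The terms in which the two bridges involve three distinct vertices split into matching pairs that cancel between the three cyclic summands, via operadic associativity of $\opcomp[s]{t}$ combined with the graded antisymmetry of the pairing; the remaining degenerate terms (where both bridges connect the same two vertices) cancel by the same mechanism. This is essentially the classical verification for the convolution Lie algebra associated to a cyclic operad and a symplectic object (see e.g.\ \cite{GK95,BM}), carried out in our sign conventions.

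Functoriality is then immediate: a morphism $\phi \colon V \to V'$ of $\Sp[m]$ satisfies $\iprod{\phi(v)}{\phi(w)} = \iprod{v}{w}$, and hence commutes with the composition operations $\opcomp[i]{j}$ on tensor powers, so the induced map $\Schur{(\shift[-m]\operad C)}{V} \to \Schur{(\shift[-m]\operad C)}{V'}$ respects the brackets. The main obstacle will be the sign bookkeeping in the Jacobi identity; working systematically with the coordinate-free formulation via $\oBij$-modules, in the style of the rest of the paper, should keep it tractable.
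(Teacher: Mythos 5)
Your proposal is correct and takes essentially the same direct-verification route as the paper, which likewise declares the antisymmetry and Jacobi checks straightforward-but-tedious, omits the details, and notes functoriality is clear from the definitions. The one aid the paper records that you do not is the observation that $v \opcomp[i]{j} w \neq 0$ forces $\deg{v_i} + \deg{w_j} = m$, which is the key simplification for the sign bookkeeping you flag as the main obstacle; also, the ``degenerate'' Jacobi terms you mention do not actually arise, since the outer bridge always involves the third operation.
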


\begin{proof}
  Checking antisymmetry and the Jacobi identity are straightforward, though tedious, computations.
  The only trick one has to use is that $v \opcomp[i]{j} w$ being non-zero implies that $\deg {v_i} + \deg {w_j} = m$.
  We omit the details.
  The claim about the functoriality is clear from the definitions.
  
  In the case $\operad C = \Lie$, a different proof of this fact can be found in \cite[equation~(6.7)]{BM}.
\end{proof}

\begin{remark}
  The condition that $\operad C$ is concentrated in even degrees is purely for convenience, as it simplifies the signs occurring.
\end{remark}

%
%
%
%

\section{The directed graph complex} \label{sec:directed_gc}

In this section our goal is to prove a version of \cref{thm:intro_dgc} with coefficients.
That is, we want to identify (in a stable range)
\[ \coinv {\Big( \CEchains * \big( \trunc {\Schur[\big] {(\shift[-(k + l - 2)] \operad \Lie)} {\shift[-1] \rHo{*}(M^{k, l}_{g,1}; \QQ)}} \big) \tensor Q \Big)} {\Gamma_g^\QQ} \]
where $\Gamma_g^\QQ$ is the group of automorphisms of the graded vector space $\rHo{*}(M_{g,1}; \QQ)$ that respect the intersection pairing, $Q$ is some $\Gamma^\QQ_g$-module, and the graded Lie algebra structure on the Schur functor is obtained from \cref{lemma:Schur_cyclic_Lie}.

To this end, we begin, in the first two subsections, with studying $\coinv {(\Schur {\bijmod P} {H} \tensor Q)} {\Aut(H)}$ where $\bijmod P$ is a $\oBij$-module, $H$ is an object of $\Sp[m]$ that is concentrated in two degrees $\neq \frac{m}{2}$, and $Q$ is an $\Aut(H)$-representation.
From the results we obtain for this situation we will, via \cref{lemma:CEchains_as_Schur,lemma:compprod_and_Schur}, be able to deduce a description as desired.
The arguments we will use are similar to those of \cite[§9]{BM} for the case where $H$ is concentrated in degree $\frac m 2$.
We will restrict ourselves to the case that $Q = V^{\tensor I} \tensor W^{\tensor J}$ for some finite linearly ordered sets $I$ and $J$, where $V$ and $W$ are the two non-trivial homogeneous pieces of $H$.
Identifying the result in this case with its induced $(\Symm I \times \Symm J)$-action allows one to deduce the result for more general representations via Schur--Weyl duality (see e.g.\ \cite[§6.1]{FH} or \cite[§5.19]{Eti}).

Throughout this section, we fix two integers $n$ and $m$ such that $m \neq 2n$.
(They correspond to $k - 1$ and $k + l - 2$ above.)

\subsection{Coinvariants of monomial functors}

In this subsection, we let $H = V \dirsum W$ be a finite-dimensional graded vector space such that $V$ is concentrated in degree $n$ and $W$ is concentrated in degree $m - n \neq n$.
Moreover, we assume that $H$ is equipped with a graded symplectic form $\iprod \blank \blank$ of degree $-m$ (see \cref{not:graded_symplectic}).
We write $\Gamma \defeq \Aut[{\Sp[m]}](H, \iprod \blank \blank)$ for the group of degree $0$ symplectic automorphisms of $H$.

Let $S$, $I$, and $J$ be finite linearly ordered sets and $P$ a $\Symm S$-module.
Our goal for this subsection is to identify the coinvariants
\[ \coinv { \big( (\opmod P \tensor[\Symm S] H^{\tensor S}) \tensor V^{\tensor I} \tensor W^{\tensor J} \big) } \Gamma \]
as a $(\Symm I \times \Symm J)$-module only in terms of $P$, without reference to $H$.
We begin with some basic observations about the structures of $H$ and $\Gamma$.

\begin{lemma} \label{lemma:identify_with_dual}
  The map $W \to \shift[m] \dual V$ given by $w \mapsto \shift[m] \iprod w \blank$ is a degree $0$ isomorphism.
  After identifying $W$ with $\shift[m] \dual V$ via this isomorphism the inner product on $H = V \dirsum \shift[m] \dual V$ is given by $\iprod {(v, \shift[m] \phi)} {(v', \shift[m] \phi')} = \phi(v') - (-1)^{n(m-n)} \phi'(v)$.
\end{lemma}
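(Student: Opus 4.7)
The plan is to first analyze how the graded symplectic form decomposes with respect to the splitting $H = V \dirsum W$, then identify $W$ with $\shift[m] \dual V$ via the form, and finally transport the form through this identification.

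First I would observe that the hypothesis $m \neq 2n$, combined with the fact that $\iprod{\blank}{\blank}$ has degree $-m$, forces the restrictions of the form to $V \tensor V$ and $W \tensor W$ to vanish: their outputs would have to lie in $\QQ_{2n - m}$ respectively $\QQ_{m - 2n}$, both of which are zero. Hence all of the form is concentrated in the cross pairings $V \tensor W \to \QQ$ and $W \tensor V \to \QQ$. Consequently, for $w \in W$, the functional $\iprod{w}{\blank}$ vanishes on $W$ and restricts to an element of $\dual V$. A direct degree count then shows that $w \mapsto \shift[m] \iprod{w}{\blank}$ defines a degree $0$ map $W \to \shift[m] \dual V$. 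Injectivity is immediate from non-degeneracy: if $\iprod{w}{\blank}$ vanishes on $V$, then by isotropy of $W$ it also vanishes on $W$, hence on all of $H$, so $w = 0$. Since $H$ is finite-dimensional and $\dim W = \dim \dual V$ (which also follows from non-degeneracy, via the isomorphism $H \iso \shift[m] \dual H$ together with the splitting), the map is an isomorphism.

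For the formula, under the identification $W \iso \shift[m] \dual V$, an element $(v, \shift[m] \phi) \in H$ corresponds to $v + w \in V \dirsum W$, where $w \in W$ is characterized by $\iprod{w}{v'} = \phi(v')$ for all $v' \in V$. Bilinearity, together with the vanishing of the form on $V \tensor V$ and on $W \tensor W$, reduces the computation of $\iprod{(v, \shift[m] \phi)}{(v', \shift[m] \phi')}$ to the two cross terms $\iprod{v}{w'} + \iprod{w}{v'}$. The second is $\phi(v')$ by definition, and graded anti-symmetry rewrites the first as $(-1)^{n(m-n) + 1} \iprod{w'}{v} = -(-1)^{n(m-n)} \phi'(v)$, giving precisely the claimed formula.

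I do not expect any real obstacle in this argument, which is essentially an exercise in sign-tracking. The only subtlety is keeping the conventions for the grading on $\dual V$, the degree shift $\shift[m]$, and the sign in graded anti-symmetry consistent throughout.
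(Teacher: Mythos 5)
Your proof is correct and follows essentially the same route as the paper: both arguments hinge on the observation that the degree condition $m \neq 2n$ forces the form to vanish on $V \tensor V$ and $W \tensor W$, so that the non-degeneracy isomorphism $H \to \dual H$ is block anti-diagonal, and the sign in the formula then comes directly from graded anti-symmetry. The only minor stylistic difference is that the paper packages the isomorphism $W \iso \shift[m]\dual V$ as one block of the global map $H \xrightarrow{\iso} \dual V \dirsum \dual W$, reading off bijectivity at once, whereas you argue injectivity and the dimension count separately; this is slightly longer but equally valid.
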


\begin{proof}
  The composition
  \[
  \begin{tikzcd}[row sep = 0]
    V \dirsum W \rar{\iso} & \dual{(V \dirsum W)} \rar{\iso} & \dual V \dirsum \dual W \\
    h \rar[mapsto] & \iprod{h}{\blank} & \\
    & \phi \rar[mapsto] & (\restrict{\phi}{V}, \restrict{\phi}{W})
  \end{tikzcd}
  \]
  is a degree $-m$ isomorphism of the form
  \[
  \begin{pmatrix}
    0 & f \\
    f' & 0
  \end{pmatrix}
  \]
  since $\iprod v {v'} = 0 = \iprod w {w'}$ for all $v, v' \in V$ and $w, w' \in W$ by our assumption that $m \neq 2n$ (which also implies $m \neq 2(m - n)$).
  In particular $f$ and $f'$ are degree $-m$ isomorphisms.
  Noting that $f(w) = \iprod w \blank$, this implies the first claim.
  The second follows from $f(w)(v) = \iprod w v = - (-1)^{n(m-n)} \iprod v w$.
\end{proof}

Using the preceding lemma we will, in the following, implicitly identify $H$ with $V \dirsum \shift[m] \dual V$.
We now give a description of $\Gamma$ in these terms.

\begin{lemma} \label{lemma:aut_is_gl}
  The map $\GL(V) \to \Aut(V \dirsum \shift[m] \dual V, \iprod \blank \blank) = \Gamma$ given by $f \mapsto f \dirsum \dual{(\inv f)}$ is an isomorphism of groups.
  (Here we implicitly identify automorphisms of $\shift[m] \dual V$ with automorphisms of $\dual V$.)
\end{lemma}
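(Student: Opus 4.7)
The map is easily seen to be a well-defined group homomorphism, so the content is (a) that $f \oplus \dual{(\inv f)}$ actually preserves $\iprod \blank \blank$, and (b) that every symplectic automorphism arises this way. I would structure the argument in three short steps.

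First, I would verify that $f \oplus \dual{(\inv f)}$ lies in $\Gamma$ by a direct computation using the explicit form of $\iprod \blank \blank$ from \cref{lemma:identify_with_dual}: applying $f \oplus \dual{(\inv f)}$ to $(v, \shift[m] \phi)$ and $(v', \shift[m] \phi')$, the new first coordinates become $fv$ and $fv'$, while the linear functionals become $\phi \after \inv f$ and $\phi' \after \inv f$; evaluating these at $fv'$ and $fv$ yields exactly $\phi(v')$ and $\phi'(v)$, so the inner product is preserved. Functoriality (i.e.\ that the assignment is a group homomorphism) is immediate from $\dual{(\inv{(fg)})} = \dual{(\inv g)} \after \dual{(\inv f)}$, and injectivity is obvious.

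For surjectivity, I would exploit the degree decomposition. Since $V$ sits in degree $n$ and $\shift[m] \dual V$ sits in degree $m - n$, and since $m \neq 2n$ forces $n \neq m - n$, any degree $0$ endomorphism of $H = V \dirsum \shift[m] \dual V$ preserves this splitting. Hence every $\gamma \in \Gamma$ has the form $f \dirsum g$ for some $f \in \GL(V)$ and some $g \in \GL(\shift[m] \dual V) \iso \GL(\dual V)$. Requiring $\gamma$ to preserve $\iprod \blank \blank$ and specializing to pairs where one functional is zero yields $(g\phi)(fv') = \phi(v')$ for all $\phi \in \dual V$ and $v' \in V$, which forces $g = \dual{(\inv f)}$.

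There is no real obstacle here; the only subtle point is remembering to invoke $m \neq 2n$ to get the clean block decomposition, and being careful with the sign $(-1)^{n(m-n)}$ appearing in the form (it cancels symmetrically on both sides and so plays no role in the argument).
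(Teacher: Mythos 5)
Your proof is correct and follows essentially the same route as the paper's: direct verification that $f \dirsum \dual{(\inv f)}$ preserves the form by the same computation, injectivity being obvious, and surjectivity by taking a block decomposition $\gamma = f \dirsum g$ and extracting $g = \dual{(\inv f)}$ from the form-preservation condition. You are slightly more careful than the paper in explicitly pointing out that $m \neq 2n$ forces $n \neq m-n$ and hence that any degree $0$ automorphism must respect the splitting $H = V \dirsum \shift[m]\dual V$, a step the paper's proof leaves implicit when it begins with ``let $f \dirsum g \in \Gamma$.''
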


\begin{proof}
  Let $f \in \GL(V)$.
  Then the computation
  \begin{align*}
     &\; \iprod {\paren{f(v), \dual{(\inv f)}(\shift[m] \phi)}} {\paren{f(v'), \dual{(\inv f)}(\shift[m] \phi')}} \\
    =&\; (\phi \after \inv f)(f(v')) - (-1)^{n(m-n)} (\phi' \after \inv f)(f(v)) \\
    =&\; \phi(v') - (-1)^{n(m-n)} \phi'(v) \\
    =&\; \iprod {(v, \shift[m] \phi)} {(v', \shift[m] \phi')}
  \end{align*}
  shows that the map in the statement is actually well-defined.
  It is clearly injective.
  We will now show that it is also surjective.
  For this purpose let $f \dirsum g \in \Gamma$, where $f \in \GL(V)$ and $g \in \GL(\shift[m] \dual V)$.
  By using that $\inv f \dirsum \inv g \in \Gamma$ as well, we obtain
  \[ g(\phi)(v) = \iprod {g(\shift[m] \phi)} v = \iprod {\shift[m] \phi} {\inv f(v)} = \phi(\inv f(v)) = \dual{(\inv f)}(\phi)(v) \]
  and hence $g = \dual{(\inv f)}$.
  This shows surjectivity.
\end{proof}

By basic linear algebra, there is a canonical isomorphism of graded vector spaces $V \tensor \shift[m] \dual V \iso \shift[m] \End{V}$ (here $\End V$ is concentrated in degree $0$ since $V$ is concentrated in a single degree).
We will now use this to identify $\coinv {(H^{\tensor S} \tensor V^{\tensor I} \tensor W^{\tensor J})} \Gamma$ in terms of $\End{V}$.
To make this precise, we need to introduce some notation.

\begin{definition} \label{not:oMatch_tensor}
  Let $S$ be a finite linearly ordered set, $A$ and $B$ two disjoint subsets of $S$ such that $A \disjunion B = S$ (i.e.\ an ordered partition of $S$ into two subsets), and $V_1$ and $V_2$ two graded vector spaces.
  Then we denote by $(V_1, V_2)^{\tensor (A, B)}$ the tensor product $\Tensor_{s \in S} V_s$ where $V_s \defeq V_1$ if $s \in A$ and $V_s \defeq V_2$ if $s \in B$.
  
  A morphism $f \colon S \to S'$ of $\oBij$ induces a map
  \[ f_* \colon (V_1, V_2)^{\tensor (A, B)} \to (V_1, V_2)^{\tensor (f(A), f(B))} \]
  by permuting the factors.
  When $V_1$ is concentrated in degree $n$ and $V_2$ in degree $m-n$, we denote by $\sgn_{A,B}(f) \in \set{\pm 1}$ the sign incurred by this permutation, i.e.\ it is chosen such that
  \[ f_*(\eTensor_{s \in S} v_s) = \sgn_{A,B}(f) \mult \eTensor_{s' \in S'} v_{\inv f(s')} \]
  holds.
\end{definition}

\begin{definition}
  Let $S$ be a finite linearly ordered set.
  We denote by $\oMatch(S)$ the set of \emph{ordered matchings} of $S$, i.e.\ (ordered) pairs $(A, B)$ such that $A$ and $B$ are disjoint subsets of $S$ with $\card A = \card B$ and $A \disjunion B = S$.
  (In particular $\oMatch(S)$ is empty if $\card S$ is odd.)
  This assembles into a functor $\oMatch \colon \oBij \to \Set$ by letting a morphism $f \colon S \to S'$ act via $(A, B) \mapsto (f(A), f(B))$.

  Also note that the linear order of $S$ restricts to a linear order on $A$ and a linear order on $B$.
  In particular there is a unique order-preserving bijection $A \to B$.
  We will denote this bijection by $\mu_{A,B}$.
\end{definition}

\begin{definition}
  Let $S$, $I$, and $J$ be finite linearly ordered sets.
  We denote by $\oMatch(S, I, J) \subseteq \oMatch(S \cop I \cop J)$ the subset of those ordered matchings $(A, B)$ such that $I \subseteq A$ and $J \subseteq B$.
  This assembles into a functor $\oMatch \colon \oBij \times \oBij \times \oBij \to \Set$ by letting a morphism $(f, g, h)$ act via $f \cop g \cop h$.
\end{definition}

\begin{remark}
  Note that there is a canonical bijection $\oMatch(S, I, J) \iso \oMatch(S)$, which is natural in $\oBij \times \oBij \times \oBij$.
\end{remark}

\begin{lemma} \label{lemma:reduce_to_end}
  Let $S$, $I$, and $J$ be finite linearly ordered sets.
  Then there is an isomorphism of $(\Symm S \times \Symm I \times \Symm J)$-modules
  \[ \Theta_1 \colon \Dirsum_{(A, B) \in \oMatch(S, I, J)} \shift[m \card A] \coinv {\big(\End{V}^{\tensor A}\big)} \Gamma  \xlongto{\iso}  \coinv {\big(H^{\tensor S} \tensor V^{\tensor I} \tensor W^{\tensor J}\big)} \Gamma \]
  where $g \in \GL(V) \iso \Gamma$ acts on $f \in \End V$ via the conjugation $g \act f = g f \inv g$, and diagonally on the tensor products.
  The $(\Symm S \times \Symm I \times \Symm J)$-module structure on the left hand side is given as follows: a morphism $(f, g, h)$ of $\Symm S \times \Symm I \times \Symm J$ maps the factor corresponding to $(A, B) \in \oMatch(S, I, J)$ into the factor corresponding to $(c(A), c(B))$ via the map given by
  \[ \shift[m \card A] \eqcl { \eTensor_{a \in A} \Ematrix {i_a} {j_a} }  \longmapsto  \shift[m \card A] \eqcl {\sgn_{A,B}(c) \mult \eTensor_{a' \in c(A)} \Ematrix {i_{\inv\sigma(a')}} {j_{\inv\tau(a')}} } \]
  where $c \defeq f \cop g \cop h$, $\sigma \defeq \restrict c A$, and $\tau \defeq \inv{(\mu_{c(A), c(B)})} \after \restrict c B \after \mu_{A, B}$, and the $\Ematrix i j$ are single-entry matrices with respect to some fixed basis of $V$ (the resulting map is independent of this choice).
  
  Moreover $\Theta_1$ can be chosen such that, for any basis $e$ of $V$, it fulfills, on the factor corresponding to $(A, B) \in \oMatch(S, I, J)$,
  \begin{align*}
    \Theta_1 ( \shift[m \card A] \eqcl { \eTensor_{a \in A} \Ematrix {i_a} {j_a} } )  &=  \eqcl {\eTensor_{s \in S \cop I \cop J} h_s} \\
    h_s &\defeq \begin{cases}e_{i_s} \in V, & \text{if } s \in A \\ \shift[m] \dual{e_{j_{\inv{\mu_{A,B}}(s)}}} \in \shift[m] \dual V, & \text{if } s \in B\end{cases}
  \end{align*}
  where the $E_{i,j}$ are with respect to $e$.
\end{lemma}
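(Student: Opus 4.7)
The plan is to construct $\Theta_1$ by an explicit decomposition along the splitting $H = V \dirsum W$, implicitly identifying $W$ with $\shift[m] \dual V$ via \cref{lemma:identify_with_dual}. Distributivity of tensor product over direct sum yields an isomorphism
\[ H^{\tensor S} \tensor V^{\tensor I} \tensor W^{\tensor J}  \iso  \Dirsum_{(A, B)} (V, W)^{\tensor (A, B)} \]
where the sum runs over all ordered partitions $(A, B)$ of $S \cop I \cop J$ with $I \subseteq A$ and $J \subseteq B$. A morphism $c = f \cop g \cop h$ of $\Symm{S \cop I \cop J}$ coming from $\Symm S \times \Symm I \times \Symm J$ sends the $(A, B)$-summand isomorphically onto the $(c(A), c(B))$-summand; the sign on individual tensors is $\sgn_{A, B}(c)$ by \cref{not:oMatch_tensor}.

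I would then compute the $\Gamma$-coinvariants summandwise. Under $\Gamma \iso \GL(V)$ (\cref{lemma:aut_is_gl}), a nonzero scalar $\lambda \in \QQ^\times$ acts on $(V, W)^{\tensor (A, B)}$ as multiplication by $\lambda^{\card A - \card B}$, since $W \iso \shift[m] \dual V$ carries the contragredient scalar action. Over $\QQ$ this forces the $\Gamma$-coinvariants of any summand with $\card A \neq \card B$ to vanish, so only the matchings $(A, B) \in \oMatch(S, I, J)$ contribute.

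For such a matching I would use the order-preserving bijection $\mu_{A, B} \colon A \to B$ to pair each $V$-factor with the corresponding $\dual V$-factor, producing a $\GL(V)$-equivariant isomorphism
\[ (V, W)^{\tensor (A, B)}  \iso  \shift[m \card A] \, \End(V)^{\tensor A} \]
along which the $\GL(V)$-action becomes diagonal conjugation. Passing to coinvariants yields the corresponding summand of the left hand side. Choosing a basis $e$ of $V$ and computing the image of $\eTensor_{a \in A} \Ematrix{i_a}{j_a}$ under the inverse of this identification (using that $\Ematrix{i}{j}$ corresponds to $e_i \tensor \dual{e_j}$ under $V \tensor \dual V \iso \End V$) gives the explicit formula stated in the last part of the lemma.

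The main obstacle will be verifying $(\Symm S \times \Symm I \times \Symm J)$-equivariance, which requires carefully combining several layers of signs: the sign $\sgn_{A, B}(c)$ from permuting factors in $(V, W)^{\tensor (A, B)}$; the Koszul signs from shuffling $V$- and $\dual V$-factors when identifying with $\End(V)^{\tensor A}$ via $\mu_{A, B}$ (done both before and after transport by $c$); and the interaction of these with the shift $\shift[m \card A]$. I expect them to combine into precisely the formula in the statement, with $\sigma = \restrict{c}{A}$ governing the reindexing of $V$-factors and $\tau = \inv{(\mu_{c(A), c(B)})} \after \restrict{c}{B} \after \mu_{A, B}$ governing the reindexing of $\dual V$-factors after pairing by $\mu$. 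Carrying this out is a routine if lengthy sign calculation.
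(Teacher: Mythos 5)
Your proposal is correct and follows essentially the same route as the paper: decompose $H^{\tensor S} \tensor V^{\tensor I} \tensor W^{\tensor J}$ via distributivity, kill the summands with $\card A \neq \card B$ by the scalar $\GL(V)$-action, reorder via $\mu_{A,B}$ to identify each surviving summand with $\shift[m\card A]\End(V)^{\tensor A}$, and check equivariance by tracking the permutation signs. The paper organizes the intermediate step through a map $\Psi_{A,B}$ to $(\shift[m]\dual V \tensor V)^{\tensor A}$ normalized by $\sgn_{A,B}(\psi_{A,B})$ before applying $\shift[m]\dual V \tensor V \iso \shift[m]\End V$, but this is the same calculation you describe.
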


\begin{proof}
  There is a canonical isomorphism
  \[ H^{\tensor S} \tensor V^{\tensor I} \tensor W^{\tensor J}  \iso  \Dirsum_{(A,B)} (V, \shift[m] \dual V)^{\tensor (A, B)} \]
  where $(A,B)$ runs over all (ordered) partitions of $S \cop I \cop J$ into two subsets such that $I \subseteq A$ and $J \subseteq B$.
  This is an isomorphism of $(\Symm S \times \Symm I \times \Symm J \times \Gamma)$-modules where a morphism $(f, g, h)$ of $\Symm S \times \Symm I \times \Symm J$ acts on both sides by permuting the factors (on the right hand side this maps the $(A, B)$ summand into the $(c(A), c(B))$ summand, where $c \defeq f \cop g \cop h$) and an element $g \in \GL(V) \iso \Gamma$ acts diagonally (acting on $\dual V$ by $\dual{(\inv g)}$).

  Now note that the $\Gamma$-coinvariants of $(V, \shift[m] \dual V)^{\tensor (A, B)}$ are trivial when $\card A \neq \card B$ since $2 \in \GL(V)$ acts via multiplication by $\nicefrac{2^{\card A}}{2^{\card B}}$.
  Hence the inclusion
  \[ \Dirsum_{(A,B) \in \oMatch(S, I, J)} (V, \shift[m] \dual V)^{\tensor (A, B)} \longincl H^{\tensor S} \tensor V^{\tensor I} \tensor W^{\tensor J} \]
  becomes an isomorphism after taking $\Gamma$-coinvariants.

  Now let $T$ be a finite linearly ordered set.
  Then there is, for any $(A, B) \in \oMatch(T)$, a canonical isomorphism of $\Gamma$-modules
  \[ \Psi_{A, B} \colon (V, \shift[m] \dual V)^{\tensor (A, B)} \xlongto{\iso} (\shift[m] \dual V \tensor V)^{\tensor A} \]
  given by the following permutation: we consider the tensor product on the right hand side to be indexed over the set $A \times \set{0, 1}$ equipped with the lexicographical order and permute according to the bijection $\psi_{A, B} \colon T \to A \times \set{0, 1}$ that sends $a \in A$ to $(a, 1)$ and $b \in B$ to $(\inv{(\mu_{A, B})}(b), 0)$.

  The isomorphisms $\sgn_{A, B}(\psi_{A, B}) \mult \Psi_{A, B}$ assemble into an isomorphism of $(\Symm T \times \Gamma)$-modules
  \[ \Psi \colon \Dirsum_{(A,B) \in \oMatch(T)} (V, \shift[m] \dual V)^{\tensor (A, B)} \xlongto{\iso} \Dirsum_{(A,B) \in \oMatch(T)} (\shift[m] \dual V \tensor V)^{\tensor A} \]
  where an element $c \in \Symm T$ acts on the right hand side by sending the factor corresponding to $(A, B)$ into the factor corresponding to $(c(A), c(B))$ via
  \[ c_*^{A,B} \big( \eTensor_{a \in A} (\shift[m] \phi_a \tensor v_a) \big)  =  \sgn_{A, B}(c) \mult \big( \eTensor_{a \in A} (\shift[m] \phi_{\inv\tau(a)} \tensor v_{\inv\sigma(a)}) \big) \]
  where $\sigma$ and $\tau$ are as in the statement of the lemma.
  Said differently we have
  \[ c_*^{A,B} = \sgn_{A, B}(\psi_{A, B}) \sgn_{c(A), c(B)}(\psi_{c(A), c(B)}) \mult ( \Psi_{c(A), c(B)} \after c_* \after \inv{\Psi_{A, B}} ) \]
  which maybe makes it clearer why $\Psi$ is compatible with the $\Symm T$-action.
  
  Now consider the case that $T = S \cop I \cop J$.
  Then it is clear that the isomorphism $\Psi$ restricts to an isomorphism
  \[ \Dirsum_{(A,B) \in \oMatch(S, I, J)} (V, \shift[m] \dual V)^{\tensor (A, B)} \xlongto{\iso} \Dirsum_{(A,B) \in \oMatch(S, I, J)} (\shift[m] \dual V \tensor V)^{\tensor A} \]
  of $(\Symm S \times \Symm I \times \Symm J \times \Gamma)$-modules.

  Lastly note that there is a $\Gamma$-module isomorphism
  \begin{align*}
    \Phi \colon \shift[m] \dual V \tensor V  &\longto  \shift[m] \End V \\
    \shift[m] \phi \tensor v  &\longmapsto  \shift[m] (v' \mapsto \phi(v') v)
  \end{align*}
  where $\Gamma \iso \GL(V)$ acts on $\End V$ by conjugation.
  Together with the canonical isomorphism $(\shift[m] \End V)^{\tensor A} \iso \shift[m \card A] (\End V)^{\tensor A}$ these isomorphisms combine into an isomorphism
  \[ \Theta_1 \colon \Dirsum_{(A, B) \in \oMatch(S, I, J)} \shift[m \card A] \coinv {\big(\End V^{\tensor A}\big)} \Gamma \xlongto{\iso} \coinv {\big(H^{\tensor S} \tensor V^{\tensor I} \tensor W^{\tensor J}\big)} \Gamma \]
  as desired.
  For the claims made about the induced $(\Symm S \times \Symm I \times \Symm J)$-module structure on the left hand side and the explicit description of $\Theta_1$, note that, when $e \defeq (e_1, \dots, e_k)$ is a basis of $V$, then $\Phi(\shift[m] \dual {e_j} \tensor e_i) = \shift[m] \Ematrix i j$ where $\Ematrix i j$ is with respect to $e$.
\end{proof}

We have now reduced finding a description of $\coinv {\big(H^{\tensor S} \tensor V^{\tensor I} \tensor W^{\tensor J}\big)} \Gamma$ independent of $H$ to finding a description of $\coinv {\big({\End V}^{\tensor A}\big)} \Gamma$ that is independent of $V$.
Such a description is provided by the classical coinvariant theory for the general linear group.

\begin{proposition}[Fundamental theorem of coinvariant theory] \label{prop:fund_thm_coinv}
  Let $k \in \NN$ such that $k \le \dim V$.
  Then there is an isomorphism of vector spaces
  \[ \coinv {\big({\End V}^{\tensor k}\big)} \Gamma \xlongto{\iso} \grpring {\Symm k} \]
  such that $\eqcl {\Ematrix {\omega(1)} {\sigma(1)} \tensor \dots \tensor \Ematrix {\omega(k)} {\sigma(k)}} \mapsto \inv\omega \sigma$ for all $\omega, \sigma \in \Symm k$ and any basis of $V$.
\end{proposition}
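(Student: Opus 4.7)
The plan is to deduce this from the classical first fundamental theorem of invariant theory (Schur--Weyl duality) for $\GL(V)$, combined with the fact that, in characteristic zero, the canonical map from invariants to coinvariants is an isomorphism for any rational representation of a reductive group.

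First I would identify $\End(V)^{\tensor k}$ with $\End(V^{\tensor k})$ via the standard isomorphism under which the diagonal conjugation action of $\Gamma \iso \GL(V)$ on the former corresponds to conjugation by the diagonal embedding $\GL(V) \hookrightarrow \GL(V^{\tensor k})$ on the latter. The Schur--Weyl map $\Phi \colon \grpring{\Symm k} \to \End(V^{\tensor k})$, sending a permutation $\tau$ to the endomorphism $v_1 \tensor \dots \tensor v_k \mapsto v_{\inv\tau(1)} \tensor \dots \tensor v_{\inv\tau(k)}$, lands in the $\GL(V)$-invariants and surjects onto them (first fundamental theorem), and is injective whenever $k \le \dim V$. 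Since $\GL(V)$ is reductive over $\QQ$, the composite $\End(V^{\tensor k})^{\GL(V)} \hookrightarrow \End(V^{\tensor k}) \twoheadrightarrow \coinv{\End(V^{\tensor k})}{\GL(V)}$ is an isomorphism, so composing with $\Phi$ yields the desired isomorphism $\grpring{\Symm k} \xrightarrow{\iso} \coinv{\End(V)^{\tensor k}}{\Gamma}$.

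It then remains to identify the inverse map with the explicit formula. Fixing a basis $e_1, \dots, e_d$ of $V$ and expanding, one has
\[ \Phi(\tau)  =  \sum_{(j_1, \dots, j_k)} E_{j_{\inv\tau(1)}, j_1} \tensor \dots \tensor E_{j_{\inv\tau(k)}, j_k}. \]
Among these summands, those in which the indices $j_1, \dots, j_k$ are pairwise distinct can be reparametrised by $\sigma \in \Symm k$ via $j_l = \sigma(l)$, which upon setting $\omega \defeq \sigma \inv\tau$ (so that $\tau = \inv\omega \sigma$) become exactly the tensors $E_{\omega(1), \sigma(1)} \tensor \dots \tensor E_{\omega(k), \sigma(k)}$ appearing in the statement. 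Therefore, to conclude, I would check that any matrix-unit tensor whose indices are not of this permutation form projects to zero in the coinvariants, and that two permutation-type tensors with the same $\inv\omega \sigma$ lie in the same $\Gamma$-orbit; the first of these holds because a non-permutation-type tensor lies in a proper $\GL(V)$-subrepresentation of $\End(V)^{\tensor k}$ on which a suitable diagonal matrix acts by a nontrivial scalar, and the second follows by conjugating by an element of $\GL(V)$ that sends one tuple $(e_{\omega(1)}, \dots, e_{\omega(k)})$ to the other.

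I do not expect any serious obstacle: the content is Schur--Weyl duality and the reductivity of $\GL(V)$, both classical; the rest is bookkeeping. The only point to be careful about is the convention for the direction of $\Phi$ (which of $\omega$, $\sigma$, or $\inv\omega\sigma$ is the permutation of the factors), which determines whether the formula comes out to $\inv\omega\sigma$ as opposed to $\sigma\inv\omega$, $\inv\sigma\omega$, etc.
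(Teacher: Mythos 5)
Your high-level strategy — Schur--Weyl duality plus the identification of invariants with coinvariants for a reductive group — is a reasonable way to prove this from scratch (the paper itself simply cites Loday). The first half of your argument, constructing \emph{an} isomorphism $\grpring{\Symm k} \to \coinv{(\End V)^{\tensor k}}{\Gamma}$ via $\Phi$ followed by the natural map from invariants to coinvariants, is fine. The gap is in the second half: the inverse of that map does \emph{not} satisfy the stated formula, and the step you propose to justify it is false. Concretely, already for $k=1$ one has $\Phi(\id) = \id_V = \sum_i E_{ii}$, and all summands are conjugate, so its class in coinvariants is $d \cdot [E_{11}]$ (where $d = \dim V$); hence your isomorphism sends $[E_{11}]$ to $\id/d$, not $\id$. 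The specific claim that fails is that a matrix-unit tensor whose indices are not of ``permutation form'' projects to zero in the coinvariants: $E_{11} \tensor E_{11}$ (for $k=2$) is a counterexample, since it pairs to $1$ against the invariant $\Phi(\id) = \sum_{i,j} E_{ii}\tensor E_{jj}$ under the trace pairing on $\End(V^{\tensor k})$, which descends to a perfect pairing between coinvariants and invariants. Your proposed reason (a diagonal matrix acts by a nontrivial scalar) shows only that the row-index and column-index \emph{multisets} must coincide, which is much weaker than being a pair of permutations. And even if the claim were true, $\Phi(\tau)$ contains $d!/(d-k)!$ permutation-type summands, all mapping to the same class, so you would recover the formula only up to that scalar.

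The isomorphism for which the stated formula actually holds is more naturally constructed in the other direction: define $\Theta\colon (\End V)^{\tensor k} \to \grpring{\Symm k}$ on matrix units by
\[ \Theta\big(E_{a_1,b_1}\tensor\dots\tensor E_{a_k,b_k}\big) = \sum_{\tau \in \Symm k} \Big( \prod_{l=1}^k \delta_{a_{\tau(l)},\,b_l} \Big)\, \tau , \]
check by a direct computation (using $\sum_c (\inv g)_{b,c}\, g_{c,a} = \delta_{a,b}$) that $\Theta(g \cdot T) = \Theta(T)$ for all $g$, so that $\Theta$ descends to the coinvariants, and then invoke the fundamental theorem to conclude that the induced map is an isomorphism when $k \le \dim V$. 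Substituting $a_l = \omega(l)$ and $b_l = \sigma(l)$ picks out the unique $\tau$ with $\omega\tau = \sigma$, i.e.\ $\tau = \inv\omega\sigma$, as desired. The Schur--Weyl/reductivity ingredients you invoke are exactly what is needed to prove this induced map is bijective, but the explicit formula lives on the map $\Theta$, not on the inverse of $\Phi$.
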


\begin{proof}
  This can, for example, be found in \cite[9.2.5, 9.2.8, and 9.2.10]{Lod}\footnote{Note that in the proof of 9.2.10 there is a small mistake, leading to the result being stated as $\sigma \inv\omega$ instead of the correct $\inv\omega \sigma$}.
\end{proof}

Applying this to our situation, we obtain the following corollary.

\begin{corollary} \label{cor:from_end_to_grpring}
  Let $S$, $I$, and $J$ be finite linearly ordered sets such that $\card S + \card I + \card J \le 2 (\dim V)$.
  Then there is an isomorphism of $(\Symm S \times \Symm I \times \Symm J)$-modules
  \[ \Theta_2 \colon \Dirsum_{(A, B) \in \oMatch(S, I, J)} \shift[m \card A] \coinv {\big(\End V^{\tensor A}\big)} \Gamma \xlongto{\iso}  \Dirsum_{(A, B) \in \oMatch(S, I, J)} \shift[m \card A] \grpring{\Symm A} \]
  where $\Symm S \times \Symm I \times \Symm J$ acts on the left hand side as in \Cref{lemma:reduce_to_end} and on the right hand side as follows: an element $(f, g, h)$ maps the factor corresponding to $(A, B) \in \oMatch(S, I, J)$ into the factor corresponding to $(c(A), c(B))$ via the map
  \begin{align*}
    \shift[m \card A] \grpring{\Symm A}  &\longto  \shift[m \card A] \grpring{\Symm {c(A)}} \\
    \shift[m \card A] \omega  &\longmapsto  \shift[m \card A] (\sgn_{A, B}(c) \mult \sigma \omega \inv\tau)
  \end{align*}
  where $c \defeq f \cop g \cop h$, and $\sigma$ and $\tau$ are as in \Cref{lemma:reduce_to_end}.

  Moreover $\Theta_2$ can be chosen such that, for any basis of $V$ and $\omega \in \Symm A$, it fulfills, on the factor corresponding to $(A, B) \in \oMatch(S, I, J)$, that
  \[ \Theta_2 \big( \shift[m \card A] \eqcl {\Ematrix 1 {\omega(1)} \tensor \dots \tensor \Ematrix a {\omega(a)}} \big) = \shift[m \card A] \omega \]
  where we identify $A$ with $\set{1, \dots, a \defeq \card A}$ via the unique order-preserving bijection.
\end{corollary}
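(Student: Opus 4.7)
The plan is to define $\Theta_2$ summand by summand by applying the fundamental theorem of coinvariant theory (\cref{prop:fund_thm_coinv}) to each factor, and then to verify the stated equivariance by direct computation. For each ordered matching $(A, B) \in \oMatch(S, I, J)$, the hypothesis $\card S + \card I + \card J \le 2 \dim V$ together with $\card A = \card B$ gives $\card A \le \dim V$, so \cref{prop:fund_thm_coinv} applies. Identifying $A$ with $\finset{\card A}$ via its order-preserving bijection, we obtain an isomorphism $\coinv{(\End V^{\tensor A})}{\Gamma} \iso \grpring{\Symm A}$, which that proposition guarantees is independent of the chosen basis. Summing over $(A, B)$ and absorbing the degree shift yields the underlying map $\Theta_2$; the explicit formula claimed in the corollary is immediate upon taking the $\omega$ of the proposition to be $\id$ and its $\sigma$ to be our $\omega$.

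To verify equivariance, fix $(f, g, h)$, set $c \defeq f \cop g \cop h$, and consider the summand indexed by $(A, B) \in \oMatch(S, I, J)$. Starting from $\omega \in \Symm A$, its $\Theta_2$-preimage is the class of $\eTensor_{a \in A} \Ematrix{a}{\omega(a)}$. By \cref{lemma:reduce_to_end}, the morphism $(f, g, h)$ sends this (up to the degree shift) to
\[ \sgn_{A, B}(c) \mult \eqcl{\eTensor_{a' \in c(A)} \Ematrix{\sigma^{-1}(a')}{\omega \tau^{-1}(a')}} \]
in the summand indexed by $(c(A), c(B))$, where $\sigma$ and $\tau$ are as in \cref{lemma:reduce_to_end}. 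Applying \cref{prop:fund_thm_coinv} once more, now with its $\omega$ set to $\sigma^{-1}$ and its $\sigma$ set to $\omega \tau^{-1}$, evaluates this class to $(\sigma^{-1})^{-1} (\omega \tau^{-1}) = \sigma \omega \tau^{-1}$, which together with the sign reproduces the corollary's formula.

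There is no genuine obstacle here: once \cref{lemma:reduce_to_end} and \cref{prop:fund_thm_coinv} are in hand, the only remaining content is careful bookkeeping of the two order-preserving identifications $A \iso \finset{\card A}$ and $c(A) \iso \finset{\card A}$ and of the way $\sigma$ and $\tau$ interact with them. The step that keeps the calculation clean is to use the full strength of \cref{prop:fund_thm_coinv}, which evaluates $\eqcl{\eTensor E_{\omega(k), \sigma(k)}}$ to $\omega^{-1} \sigma$ even when the row indices $\omega(k)$ are not in increasing order, so that one need not first rearrange the tensor factors into canonical form before reading off the image.
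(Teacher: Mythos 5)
Your proof is correct and takes essentially the same route as the paper: on each summand apply \cref{prop:fund_thm_coinv} (which is applicable since $\card A = \card B$ and $\card A + \card B = \card S + \card I + \card J \le 2 \dim V$ give $\card A \le \dim V$), and then verify $(\Symm S \times \Symm I \times \Symm J)$-equivariance by checking it on the basis $\eqcl{\Ematrix{1}{\omega(1)} \tensor \dots \tensor \Ematrix{a}{\omega(a)}}$, reading off the image under the action via \cref{lemma:reduce_to_end} and then applying the fundamental theorem once more. The only bookkeeping point you rightly flag — that \cref{prop:fund_thm_coinv} evaluates tensors whose row indices are not in increasing order, so no prior rearrangement is needed — is precisely the step the paper's proof relies on as well.
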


\begin{proof}
  On each summand, $\Theta_2$ is given by a shift of the isomorphism in \Cref{prop:fund_thm_coinv}.
  Now let $(A, B) \in \oMatch(S, I, J)$ and $\omega \in \Symm A$, and fix a basis of $V$.
  We will also use the notation from \Cref{lemma:reduce_to_end}.
  First note that by \Cref{prop:fund_thm_coinv} the elements $\eqcl {\Ematrix 1 {\omega(1)} \tensor \dots \tensor \Ematrix a {\omega(a)}}$ of $\coinv {\big(\End V^{\tensor A}\big)} \Gamma$, with $\omega$ running over $\Symm A$, form a basis.
  Hence it is enough to check the compatibility of $\Theta_2$ with the $(\Symm S \times \Symm I \times \Symm J)$-action on those elements.
  To this end we note that $\Theta_2 \big( \shift[m \card A] \eqcl {\Ematrix 1 {\omega(1)} \tensor \dots \tensor \Ematrix a {\omega(a)}} \big) = \shift[m \card A] \omega$ and
  \begin{align*}
     &\; \Theta_2 \big( {\sgn_{A, B}(c)} \mult \shift[m \card A] \eqcl {\Ematrix {\inv\sigma(1)} {\omega(\inv\tau(1))} \tensor \dots \tensor \Ematrix {\inv\sigma(a)} {\omega(\inv\tau(a))}} \big) \\
    =&\; \shift[m \card A] (\sgn_{A, B}(c) \mult \sigma \omega \inv\tau)
  \end{align*}
  which is what we wanted to show.
\end{proof}

\begin{remark} \label{rem:from_end_to_grpring}
  Setting $T = S \cop I \cop J$, we note that $\Theta_2$ restricts to an isomorphism of $(\Symm S \times \Symm I \times \Symm J)$-modules on the summands indexed by $\oMatch(S, I, J) \subseteq \oMatch(T)$.
\end{remark}

We have now almost achieved what we set out do to in this subsection, as we have obtained, when $\card S + \card I + \card J \le 2 (\dim V)$, an isomorphism
\begin{equation} \label{eq:H_coinv}
  \coinv {\big(H^{\tensor S} \tensor V^{\tensor I} \tensor W^{\tensor J}\big)} \Gamma  \iso  \Dirsum_{(A, B) \in \oMatch(S, I, J)} \shift[m \card A] \grpring{\Symm A}
\end{equation}
with the right hand side independent of $H$ as desired.
To generalize this to expressions of the form $\coinv {((\opmod P \tensor[\Symm S] H^{\tensor S}) \tensor V^{\tensor I} \tensor W^{\tensor J})} \Gamma$, we will use the abstract lemmas of \cref{sec:Kan_extensions}.
To state the result in its simplest form, we need to introduce a bit of notation.

\begin{definition}
  Let $I$ and $J$ be finite linearly ordered sets.
  We denote by $\Match{I,J}$ the following groupoid:
  \begin{itemize}
    \item Objects are tuples $(S, A, B, \mu)$ with $S$ a finite linearly ordered set, $(A, B) \in \oMatch(S, I, J)$ an ordered matching, and $\mu \colon A \to B$ a bijection.
    \item A morphism $(S, A, B, \mu) \to (S', A', B', \mu')$ is a tuple $(f, g, h)$ of bijections $f \colon S \to S'$, $g \in \Symm I$, and $h \in \Symm J$, such that the map $c \defeq f \cop g \cop h$ fulfills $c(A) = A'$, $c(B) = B'$, and $\restrict c B \after \mu = \mu' \after \restrict c A$.
  \end{itemize}
  Moreover, for $S \in \oBij$, we denote by $\Match[S]{I,J} \subseteq \Match{I,J}$ the full subgroupoid spanned by objects of the form $(S', A', B', \mu')$ with $S' = S$.
  Also note that there is a canonical functor $\pr[I,J] \colon \Match{I,J} \to \Symm I \times \Symm J$ given on morphisms by sending $(f, g, h)$ to $(g, h)$.
  We denote by $\Match{\id[I],\id[J]} \subseteq \Match{I,J}$ the wide subgroupoid of those morphisms that are sent to $(\id[I], \id[J])$ by $\pr[I,J]$, and analogously for $\Match[S]{\id[I],\id[J]} \subseteq \Match[S]{I,J}$.
\end{definition}

\begin{definition}
  We write $\sgn_{n, m}$ for the $\Match{I,J}$-module given by
  \[ (S, A, B, \mu)  \longmapsto  (\shift[n] \basefield, \shift[m-n] \basefield)^{\tensor (A, B)} \]
  (see \Cref{not:oMatch_tensor}).
  Said differently, it is $\shift[m \card A] \basefield$ on which a morphism $(f, g, h)$ acts by $\sgn_{A, B}(c)$, where $c \defeq f \cop g \cop h$.
\end{definition}

With only a little bit more work, the lemmas of \cref{sec:Kan_extensions} now specialize to the following.
(See \cite[Corollary~9.8]{BM} for a similar statement in the case where $H$ is concentrated in a single degree and $I = J = \emptyset$.)

\begin{proposition} \label{prop:coinv_monomial}
  Let $S$, $I$, and $J$ be finite linearly ordered sets such that $\card S + \card I + \card J \le 2 (\dim V) = \dim H$, and let $P$ be a $\Symm S$-module.
  Then there is an isomorphism of $(\Symm I \times \Symm J)$-modules
  \[ \colim{\Match[S]{\id[I],\id[J]}} (P \tensor \sgn_{n, m})  \xlongto{\iso}  \coinv { \big( (\opmod P \tensor[\Symm S] H^{\tensor S}) \tensor V^{\tensor I} \tensor W^{\tensor J} \big) } \Gamma \]
  where a morphism $(f, \id[I], \id[J])$ of $\Match[S]{\id[I],\id[J]}$ acts on $P$ by $f \in \Symm S$, and $\Gamma$ acts trivially on $P$ and diagonally on the tensor product.
  The action of an element $(g, h) \in \Symm I \times \Symm J$ on the left hand side is given by sending an element represented by $(A, B, \mu) \in \Match[S]{\id[I],\id[J]}$ and $x \in P \tensor \sgn_{n, m}$ to the element represented by $(A, B, \restrict c B \after \mu \after \inv {(\restrict c A)})$ and $\sgn_{A, B}(c) x$, where $c \defeq {\id[S]} \cop g \cop h$.
  
  Moreover, the isomorphism above can be chosen such that, for any basis $e$ of V, it maps the element of the colimit represented by $(A, B, \mu) \in \Match[S]{\id[I],\id[J]}$ and $p \tensor \shift[m \card A] 1 \in P \tensor \sgn_{n, m}$ to $\eqcl {p \tensor \eTensor_{t \in S \cop I \cop J} h_t}$ where
  \[ h_t  \defeq  \begin{cases} e_{\inv a(t)} \in V, & \text{if } t \in A \\ \shift[m] \dual{e_{\inv a(\inv \mu(t))}} \in \shift[m] \dual V, & \text{if } t \in B \end{cases} \]
  (here $a \colon \finset{\card A} \to A$ is the unique order-preserving bijection).
\end{proposition}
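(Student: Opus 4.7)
The plan is to construct the isomorphism in three stages, reducing to the identifications already established in this subsection together with the abstract machinery of \cref{sec:Kan_extensions}. First, since $\Gamma$ acts trivially on $\opmod P$ and its action commutes with the $\Symm S$-action on $H^{\tensor S}$, the factor $\opmod P$ can be pulled out of the $\Gamma$-coinvariants, giving an isomorphism
\[ \coinv { \big( (\opmod P \tensor[\Symm S] H^{\tensor S}) \tensor V^{\tensor I} \tensor W^{\tensor J} \big) } \Gamma  \iso  \opmod P \tensor[\Symm S] \coinv { \big( H^{\tensor S} \tensor V^{\tensor I} \tensor W^{\tensor J} \big) } \Gamma \]
of $(\Symm I \times \Symm J)$-modules. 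Combining $\Theta_1$ from \cref{lemma:reduce_to_end} with $\Theta_2$ from \cref{cor:from_end_to_grpring} (and \cref{rem:from_end_to_grpring}), the right hand side is then identified with $\opmod P \tensor[\Symm S] \Dirsum_{(A, B) \in \oMatch(S, I, J)} \shift[m \card A] \grpring{\Symm A}$ equipped with the explicit $(\Symm S \times \Symm I \times \Symm J)$-action given there.

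The second stage is to reinterpret this expression as a Kan extension. To this end I introduce the $(\Symm S \times \Symm I \times \Symm J)$-set
\[ T  \defeq  \set{(A, B, \mu)  :  (A, B) \in \oMatch(S, I, J),\ \mu \colon A \to B \text{ a bijection}} \]
on which $(f, g, h)$ acts via $(A, B, \mu) \mapsto (c(A), c(B), \restrict c B \after \mu \after \inv{\restrict c A})$ with $c = f \cop g \cop h$, and whose Grothendieck construction is canonically isomorphic to $\Match[S]{I, J}$. Using the identification $\grpring{\Symm A} \iso \linspan{\text{bijections } A \to B}$ sending $\omega$ to $\mu_{A, B} \after \omega$, the action formulas of \cref{cor:from_end_to_grpring} translate exactly into saying that $\Dirsum_{(A, B)} \shift[m \card A] \grpring{\Symm A}$ is isomorphic, as a $(\Symm S \times \Symm I \times \Symm J)$-module, to $\Dirsum_{t \in T} \sgn_{n, m}(t)$. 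Applying \cref{lemma:functors_from_gc} to $G = \Symm S \times \Symm I \times \Symm J$, its normal subgroup $\Symm S \times \set e \times \set e$, and the functor $P \tensor \sgn_{n, m}$ on the Grothendieck construction then yields an isomorphism
\[ \opmod P \tensor[\Symm S] \Dirsum_{(A, B)} \shift[m \card A] \grpring{\Symm A}  \iso  \Lan{\pr[I, J]} (P \tensor \sgn_{n, m}) \]
of $(\Symm I \times \Symm J)$-modules.

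Finally, for every object $(A, B, \mu)$ of $\Match[S]{I, J}$ and every $(g, h) \in \Symm I \times \Symm J$, the triple $(\id[S], g, h)$ provides a morphism out of $(A, B, \mu)$ whose image under $\pr[I, J]$ is $(g, h)$, so the hypothesis of \cref{lemma:groupoid_Kan_ext} is satisfied. Invoking that lemma identifies the Kan extension above with $\colim {\Match[S]{\id[I], \id[J]}} (P \tensor \sgn_{n, m})$ carrying precisely the $(\Symm I \times \Symm J)$-action described in the statement, yielding the desired isomorphism. The explicit formula on basis elements will then follow by composing the explicit descriptions of $\Theta_1$, $\Theta_2$, and the natural transformation appearing in \cref{lemma:groupoid_Kan_ext}. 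The main technical obstacle will be to verify that all the sign factors $\sgn_{A, B}(c)$ and the permutations $\sigma$ and $\tau$ from \cref{cor:from_end_to_grpring} match across the various identifications so as to reproduce exactly the action formula in the statement; while conceptually straightforward, this bookkeeping is somewhat delicate.
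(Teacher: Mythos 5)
Your approach matches the paper's proof almost exactly: pull the $\opmod P$ factor out of the $\Gamma$-coinvariants, apply \cref{lemma:reduce_to_end,cor:from_end_to_grpring,rem:from_end_to_grpring} to identify the inner coinvariants, recognize the result as a Kan extension via \cref{lemma:functors_from_gc} together with the observation that the Grothendieck construction of the $(\Symm S \times \Symm I \times \Symm J)$-set $T$ is $\Match[S]{I,J}$, and finally invoke \cref{lemma:groupoid_Kan_ext} with the lifts $(\id[S], g, h)$ to express the Kan extension as a colimit over $\Match[S]{\id[I],\id[J]}$.

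There is, however, a small but genuine error in one intermediate identification. You identify $\grpring{\Symm A}$ with $\linspan{\Bij{A}{B}}$ via $\omega \mapsto \mu_{A,B} \after \omega$. With this choice the $(\Symm S \times \Symm I \times \Symm J)$-action $\omega \mapsto \sgn_{A,B}(c)\,\sigma\omega\inv\tau$ from \cref{cor:from_end_to_grpring} does \emph{not} correspond to the desired action $\mu \mapsto \sgn_{A,B}(c) \cdot \restrict c B \after \mu \after \inv{(\restrict c A)}$ on bijections: unwinding gives $\mu_{c(A),c(B)} \after \sigma\omega\inv\tau$, which differs from $\restrict c B \after (\mu_{A,B}\after\omega) \after \inv{(\restrict c A)} = \mu_{c(A),c(B)} \after \tau\omega\inv\sigma$ in general. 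The correct identification is $\omega \mapsto \mu_{A,B} \after \inv\omega$; then, using $\mu_{c(A),c(B)} \after \tau = \restrict c B \after \mu_{A,B}$ and $\sigma = \restrict c A$, one finds $\mu_{c(A),c(B)} \after \inv{(\sigma\omega\inv\tau)} = \restrict c B \after (\mu_{A,B} \after \inv\omega) \after \inv{(\restrict c A)}$ as required. With this fix the rest of your outline goes through, and the remaining verification of the sign factors and the explicit formula on representatives is, as you anticipate, direct bookkeeping.
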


\begin{proof}
  We start by rewriting the expression yielded by \cref{cor:from_end_to_grpring,rem:from_end_to_grpring}.
  We have isomorphisms of $(\Symm S \times \Symm I \times \Symm J)$-modules
  \begin{align*}
    \Dirsum_{(A, B) \in \oMatch(S, I, J)} \shift[m \card A] \grpring{\Symm A}  &\iso  \Dirsum_{(A, B) \in \oMatch(S, I, J)} \shift[m \card A] \linspan{\Bij{A}{B}} \\
      &\iso  \Dirsum_{(A, B, \mu) \in \oMatch_\mu(S, I, J)} \shift[m \card A] \linspan{\mu}
  \end{align*}
  where, for the first isomorphism, we identify $\Symm A$ with the set of (not necessarily order preserving) bijections $\Bij{A}{B}$ via the map $\omega \mapsto \mu_{A,B} \after \inv \omega$.
  For the second isomorphism, we set $\oMatch_\mu(S, I, J) \defeq \Ob(\Match[S]{I,J})$.
  An element $(f, g, h) \in \Symm S \times \Symm I \times \Symm J$ acts on $\oMatch_\mu(S, I, J)$ via
  \[ (A, B, \mu) \longmapsto \big( c(A), c(B), c \act \mu \defeq \restrict c B \after \mu \after \inv {(\restrict c A)} \big) \]
  where $c \defeq f \cop g \cop h$, and on $\linspan \mu$ by sending $\mu$ to $\sgn_{A,B}(c) \mult (c \act \mu)$.
  
  Now we have isomorphisms of $(\Symm I \times \Symm J)$-modules
  \begin{align*}
    \coinv {\big( (\opmod P \tensor[\Symm S] H^{\tensor S}) \tensor V^{\tensor I} \tensor W^{\tensor J} \big)} \Gamma  &=  \coinv {\big( \coinv {(P \tensor H^{\tensor S})} {\Symm S} \tensor V^{\tensor I} \tensor W^{\tensor J} \big)} \Gamma \\
      &\iso  \coinv {\big( \coinv {(P \tensor H^{\tensor S} \tensor V^{\tensor I} \tensor W^{\tensor J})} \Gamma \big)} {\Symm S} \\
      &\iso  \coinv {\big( P \tensor \coinv {(H^{\tensor S} \tensor V^{\tensor I} \tensor W^{\tensor J})} \Gamma \big)} {\Symm S}
  \end{align*}
  since the actions of $\Symm S \times \Symm I \times \Symm J$ and $\Gamma$ commute and since $\Gamma$ acts trivially on $P$.
  By \cref{lemma:reduce_to_end,cor:from_end_to_grpring,rem:from_end_to_grpring}, as well as the preceding discussion, the rightmost expression above is isomorphic to
  \begin{equation} \label{eq:pf_coinv_monomial}
    \coinv {\Big( P \tensor \Dirsum_{(A, B) \in \oMatch(S, I, J)} \shift[m \card A] \grpring{\Symm A} \Big)} {\Symm S}  \iso  \coinv {\Big( \Dirsum_{(A, B, \mu) \in \oMatch_\mu(S, I, J)} \big( P \tensor \shift[m \card A] \linspan{\mu} \big) \Big)} {\Symm S}
  \end{equation}
  as $(\Symm I \times \Symm J)$-modules.
  To describe the action on the sum on the right hand side, let $(f, g, h) \in \Symm S \times \Symm I \times \Symm J$ and set $c \defeq f \cop g \cop h$.
  Then it sends the summand corresponding to $(A, B, \mu)$ to the summand corresponding to $(c(A), c(B), c \act \mu)$ by acting on $P$ via $f$ and on $\linspan \mu$ as described above.
  The quotient is taken with respect to the action of $\Symm S \iso \Symm S \times \set{\id[I]} \times \set{\id[J]}$.
  
  Now we can apply \cref{lemma:functors_from_gc}.
  Noting that there is a canonical isomorphism
  \[ \gc[\Symm S \times \Symm I \times \Symm J] \oMatch_\mu(S, I, J)  \iso  \Match[S]{I,J} \]
  of categories over $\Symm I \times \Symm J$, we obtain an isomorphism of the right hand side of \eqref{eq:pf_coinv_monomial} with
  \[ \Lan{\Match[S]{I,J} \to \Symm I \times \Symm J} (P \tensor \sgn_{n, m}) \]
  as $(\Symm I \times \Symm J)$-modules, where a morphism $(f, g, h)$ of $\Match[S]{I,J}$ acts on $P$ by $f \in \Symm S$.
  To this we can, in turn, apply \cref{lemma:groupoid_Kan_ext} to obtain the desired description in terms of a colimit: given an object $(A, B, \mu) \in \Match[S]{I,J}$ and an element $(g, h) \in \Symm I \times \Symm J$, we pick the lift $(\id[S], g, h) \colon (A, B, \mu) \to (A, B, c \act \mu)$ where $c \defeq {\id[S]} \cop g \cop h$.
  
  The explicit description of the isomorphism is obtained by chasing through the explicit descriptions of its constituent parts.
\end{proof}

\subsection{Coinvariants of Schur functors} \label{sec:coinv_Schur}

In this subsection we want to compute
\[ \coinv {\big( \Schur {\bijmod P} {H} \tensor V^{\tensor I} \tensor W^{\tensor J} \big)} {\Gamma} \]
as a $(\Symm I \times \Symm J)$-module, where $\bijmod P$ is some $\oBij$-module, and $H$, $V$, $W$, and $\Gamma$ are as in the preceding subsection.
Noting that $\Schur {\bijmod P} {H}$ is a sum of monomial functors in $H$, \cref{prop:coinv_monomial} yields an identification as desired in a range depending on the dimension of $H$.
To obtain a description of the whole thing we thus need to let the dimension of $H$ go to infinity.

More precisely we fix the following notation for the rest of this section:
\[ H_1 \to H_2 \to H_3 \to \dots \]
is a sequence of maps of $\Sp[m]$ (see \cref{not:graded_symplectic}) such that $H_i$ is concentrated in dimensions $n$ and $m-n$ and such that the sequence $\dim{H_i}$ diverges.
Moreover we write $\Gamma_i \defeq \Aut[{\Sp[m]}](H_i)$, and denote by $V_i$ and $W_i$ the degree $n$ and $m - n$ parts of $H_i$, respectively.

Note that there are maps $\Gamma_i \to \Gamma_{i+1}$ given by extending an automorphism of $H_i$ by the identity on the symplectic complement of $H_i$ in $H_{i+1}$.
Moreover the maps $H_i \to H_{i+1}$ become $\Gamma_i$-equivariant when $H_{i+1}$ is equipped with the restricted action.
In particular, for any functor $F$ from $\Sp[m]$ to some cocomplete category $\cat C$, we obtain a sequence $\coinv {F(H_1)} {\Gamma_1} \to \coinv {F(H_2)} {\Gamma_2} \to \dots$ and thus can make sense of the expression $\colim{i \in \NNo} \coinv {F(H_i)} {\Gamma_i}$.

\begin{definition}
  A \emph{compatible basis} of the $V_i$ is a sequence $e_1, e_2, \dots$ of elements of $\colim{i} V_i$ such that, for each $i \ge 1$, each of the elements $e_1, \dots, e_{\dim {V_i}}$ is represented by an (automatically unique) element of $V_i$, and the set of these representatives forms a basis of $V_i$.
\end{definition}

\begin{proposition} \label{lemma:coinv_Schur}
  Let $I$ and $J$ be finite linearly ordered sets and $\bijmod P$ a $\oBij$-module.
  Then there is an isomorphism of $(\Symm I \times \Symm J)$-modules
  \[ \colim{(S, A, B, \mu) \in \Match{\id[I],\id[J]}} \big( \bijmod P(S) \tensor \sgn_{n, m} \big)  \xlongto{\iso}  \colim{i \in \NNo} \coinv {\big( \Schur {\bijmod P} {H_i} \tensor V_i^{\tensor I} \tensor W_i^{\tensor J} \big)} {\Gamma_i} \]
  where a morphism $(f, \id[I], \id[J])$ of $\Match{\id[I],\id[J]}$ acts on $P$ by $f \in \Symm S$.
  The action of an element $(g, h) \in \Symm I \times \Symm J$ on the left hand side is given by sending an element represented by $(S, A, B, \mu) \in \Match{\id[I],\id[J]}$ and $x \in \bijmod P(S) \tensor \sgn_{n, m}$ to the element represented by $(S, A, B, \restrict c B \after \mu \after \inv {(\restrict c A)})$ and $\sgn_{A, B}(c) x$, where $c \defeq {\id[S]} \cop g \cop h$.
  
  Moreover, this isomorphism can be chosen such that, for any compatible basis $(e_j)_{j \in \NNpos}$ of the $V_i$, it maps an element represented by $(S, A, B, \mu) \in \Match{\id[I],\id[J]}$ and $p \tensor \shift[m \card A] 1 \in \bijmod P(S) \tensor \sgn_{n, m}$ to the element $\eqcl {p \tensor \eTensor_{t \in S \cop I \cop J} h_t}$ of
  \[ \colim{i \in \NNo} \coinv {\big( (\opmod {\bijmod P(S)} \tensor[\Symm S] H_i^{\tensor S}) \tensor V_i^{\tensor I} \tensor W_i^{\tensor J} \big)} {\Gamma_i}  \subseteq  \colim{i \in \NNo} \coinv {\big( \Schur {\bijmod P} {H_i} \tensor V_i^{\tensor I} \tensor W_i^{\tensor J} \big)} {\Gamma_i} \]
  where $h_t$ is as in \cref{prop:coinv_monomial} (and we use the definition of $\Schur {\bijmod P} {H_i}$ of \cref{rem:Schur_functor}).
\end{proposition}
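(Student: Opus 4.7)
The plan is to reduce to \cref{prop:coinv_monomial} by decomposing the Schur functor into its monomial components in the cardinality of $S$, applying that proposition to each piece in the stable range, and then letting the dimension of $H_i$ tend to infinity.

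First, I would use the skeletal description of the Schur functor from \cref{rem:Schur_functor} to write
\[ \Schur{\bijmod P}{H_i} \iso \Dirsum_{k \in \NN} \big( \opmod{\bijmod P(\finset k)} \tensor[\Symm k] H_i^{\tensor k} \big) \]
as $\Gamma_i$-modules, compatibly with the stabilization maps in $i$. Tensoring by $V_i^{\tensor I} \tensor W_i^{\tensor J}$ and applying \cref{prop:coinv_monomial} to the summand with $S = \finset k$ then gives, as soon as $\dim H_i \geq k + \card I + \card J$, an isomorphism of $(\Symm I \times \Symm J)$-modules
\[ \colim{\Match[\finset k]{\id[I],\id[J]}} \big( \bijmod P(\finset k) \tensor \sgn_{n,m} \big) \xlongto{\iso} \coinv{\big( ( \opmod{\bijmod P(\finset k)} \tensor[\Symm k] H_i^{\tensor k} ) \tensor V_i^{\tensor I} \tensor W_i^{\tensor J} \big)}{\Gamma_i}. \]

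Next, since $\dim H_i$ diverges, the stable range is attained for every fixed $k$ at sufficiently large $i$, and the left hand side above is independent of $i$. Using that filtered colimits commute with direct sums, summing over $k$ and taking $\colim{i \in \NNo}$ yields an isomorphism
\[ \Dirsum_{k \in \NN} \colim{\Match[\finset k]{\id[I],\id[J]}} \big( \bijmod P(\finset k) \tensor \sgn_{n,m} \big) \iso \colim{i \in \NNo} \coinv{\big( \Schur{\bijmod P}{H_i} \tensor V_i^{\tensor I} \tensor W_i^{\tensor J} \big)}{\Gamma_i}. \]
To identify the left hand side with a single colimit over $\Match{\id[I],\id[J]}$, I would observe that this groupoid splits as the disjoint union of its full subgroupoids $\Match[k]{\id[I],\id[J]}$ on objects with $\card S = k$ (since every morphism is $(f, \id[I], \id[J])$ with $f$ a bijection, preserving cardinality), and that the skeletal inclusion $\Match[\finset k]{\id[I],\id[J]} \longincl \Match[k]{\id[I],\id[J]}$ is an equivalence of groupoids. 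Hence the direct sum on the left equals $\colim{\Match{\id[I],\id[J]}} (\bijmod P(S) \tensor \sgn_{n,m})$. The action of $\Symm I \times \Symm J$ described in the statement is recovered summand by summand from \cref{prop:coinv_monomial}.

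For the explicit formula in terms of a compatible basis $(e_j)_{j \in \NNpos}$, I would choose the isomorphism at each level $i$ to be the one furnished by \cref{prop:coinv_monomial} for the restricted basis of $V_i$, and then check that these assemble compatibly along the stabilization maps $V_i \to V_{i+1}$; this is automatic since the maps of $\Sp[m]$ send basis vectors to basis vectors and the formula for $h_t$ depends only on the indices $\inv a(t)$ and $\inv a(\inv \mu(t))$, not on $i$. The main thing to verify is therefore organizational rather than mathematical: that the per-$k$ isomorphisms are compatible with the stabilization in $i$ and that, under the rearrangement of the direct sum into the single colimit, the $(\Symm I \times \Symm J)$-actions match up on the nose. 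Nothing genuinely new is needed beyond \cref{prop:coinv_monomial}.
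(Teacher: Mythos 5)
Your proposal is correct and follows essentially the same route as the paper's own proof: decompose $\Schur{\bijmod P}{H_i}$ by cardinality $k$, apply \cref{prop:coinv_monomial} in the stable range $k \le \dim H_i - \card I - \card J$, use the divergence of $\dim H_i$ and commutation of colimits with direct sums, and conclude via the equivalence between the skeletal subgroupoid $\coprod_k \Match[\finset k]{\id[I],\id[J]}$ and $\Match{\id[I],\id[J]}$. The only slightly loose phrasing is "the maps of $\Sp[m]$ send basis vectors to basis vectors" — this requires choosing the bases compatibly (which is exactly what a compatible basis does), not just any bases; the paper makes this explicit, but you clearly have the right idea.
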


\begin{proof}
  We have
  \[ \coinv {\big( \Schur {\bijmod P} {H_i} \tensor V_i^{\tensor I} \tensor W_i^{\tensor J} \big)} {\Gamma_i}  \iso  \Dirsum_{k \in \NN} \coinv {\big( (\opmod {\bijmod P(\finset k)} \tensor[\Symm{\finset k}] {H_i}^{\tensor \finset k}) \tensor V_i^{\tensor I} \tensor W_i^{\tensor J} \big)} {\Gamma_i}\]
  by definition of $\Schur{\bijmod P}{\blank}$.
  When $k \le \dim {H_i} - \card I - \card J$, there is an isomorphism
  \begin{equation} \label{eq:coinv_Schur_single}
    \colim{\Match[\finset k]{\id[I],\id[J]}} \big( \bijmod P(\finset k) \tensor \sgn_{n, m} \big)  \xlongto{\iso}  \coinv {\big( (\opmod {\bijmod P(\finset k)} \tensor[\Symm {\finset k}] {H_i}^{\tensor \finset k}) \tensor V_i^{\tensor I} \tensor W_i^{\tensor J} \big)} {\Gamma_i}
  \end{equation}
  provided by \Cref{prop:coinv_monomial}.
  It is compatible with the maps $H_i \to H_{i+1}$ (i.e.\ it is a natural isomorphism of sequential diagrams, where the left hand side is considered as a constant functor).
  To see this, we chose a basis of $V_i$ (the degree $n$ part of $H_i$), extend it to a basis of $V_{i+1}$, and use the explicit description of the isomorphism \eqref{eq:coinv_Schur_single}.
  Hence we have, for each $k \in \NN$, an isomorphism
  \[ \colim{\Match[\finset k]{\id[I],\id[J]}} \big( \bijmod P(\finset k) \tensor \sgn_{n, m} \big)  \xlongto{\iso}  \colim{i \in \NNo} \coinv {\big( (\opmod {\bijmod P(\finset k)} \tensor[\Symm {\finset k}] {H_i}^{\tensor \finset k}) \tensor V_i^{\tensor I} \tensor W_i^{\tensor J} \big)} {\Gamma_i} \]
  which, by setting $\MatchPermGrpd{\id[I],\id[J]}  \defeq  \coprod_{k \in \NN} \Match[\finset k]{\id[I],\id[J]}  \subset  \Match{\id[I],\id[J]}$, assemble into an isomorphism
  \begin{multline*}
    \colim{\MatchPermGrpd{\id[I],\id[J]}} \big( \bijmod P \tensor \sgn_{n, m} \big)  \iso  \Dirsum_{k \in \NN} \colim{\Match[\finset k]{\id[I],\id[J]}} \big( \bijmod P(\finset k) \tensor \sgn_{n, m} \big) \\
      \xlongto{\iso}  \colim{i \in \NNo} \coinv {\big( \Schur {\bijmod P} {H_i} \tensor V_i^{\tensor I} \tensor W_i^{\tensor J} \big)} {\Gamma_i}
  \end{multline*}
  \[  \]
  since colimits commute with direct sums.
  Since the inclusion $\MatchPermGrpd{\id[I],\id[J]} \to \Match{\id[I],\id[J]}$ is an equivalence of categories, it induces a canonical isomorphism
  \[ \colim{\MatchPermGrpd{\id[I],\id[J]}} \big( \bijmod P \tensor \sgn_{n, m} \big)  \xlongto{\iso}  \colim{\Match{\id[I],\id[J]}} \big( \bijmod P \tensor \sgn_{n, m} \big) \]
  which finishes the construction.
  
  We will now prove the claimed explicit description of the composed isomorphism.
  To this end, consider the element represented by $(S, A, B, \mu) \in \Match{\id[I],\id[J]}$ and $p \tensor \shift[m \card A] 1 \in \bijmod P(S) \tensor \sgn_{n, m}$.
  It is equal, for any choice of bijection $f \colon S \to \finset{\card S}$, to the element represented by $\big( \finset{\card S}, c(A), c(B), \restrict c B \after \mu \after \inv {(\restrict c A)} \big)$ and $\sgn_{A, B}(c) \mult \big( f_*(p) \tensor \shift[m \card A] 1 \big)$, where $c \defeq f \cop {\id[I]} \cop {\id[J]}$.
  On such an element the isomorphism is, by construction, given by the isomorphism of \Cref{prop:coinv_monomial}.
  The representative one obtains using the explicit description of that isomorphism represents the same element as the representative we provided in the statement of this proposition (by translating it back to $S$ via $\inv f$).
  In the same way one proves that the action of $\Symm I \times \Symm J$ is as claimed.
\end{proof}

\subsection{Coinvariants of double Schur functors}

In this subsection our goal is to compute
\[ \colim{i \in \NNo} \coinv {\big( \Schur {\bijmod P} {\Schur {\bijmod Q} {H_i}} \tensor V_i^{\tensor I} \tensor W_i^{\tensor J} \big)} {\Gamma_i} \]
as a $(\Symm I \times \Symm J)$-module, where $\bijmod P$ and $\bijmod Q$ are $\oBij$-modules, and $H_i$, $V_i$, $W_i$, and $\Gamma_i$ are as in the preceding subsection.
By \cref{lemma:compprod_and_Schur}, we have an isomorphism $\Schur {\bijmod P} {\Schur {\bijmod Q} {H_i}} \iso \Schur {(\bijmod P \compprod \bijmod Q)} {H_i}$.
Hence we can use \cref{lemma:coinv_Schur} to obtain a description as desired.
We now introduce some notation to state the result in a nice way.

\begin{definition} \label{def:dirgraph}
  Let $I$ and $J$ be finite linearly ordered sets.
  We denote by $\DirGraph[{\id[I],\id[J]}]$ the following groupoid:
  \begin{itemize}
    \item Objects are \emph{directed $(I,J)$-graphs}, that is, tuples $(F, N, S, T, \mu, a)$ with $F$ and $N$ finite linearly ordered sets, $(S, T) \in \oMatch(F, I, J)$ an ordered matching, $\mu \colon S \to T$ a bijection, and $a \colon F \to N$ a map of sets.
    \item A morphism from $(F, N, S, T, \mu, a)$ to $(F', N', S', T', \mu', a')$ is a tuple of bijections $(f \colon F \to F'$, $k \colon N \to N')$ such that $c(S) = S'$, $c(T) = T'$, $\restrict c T \after \mu = \mu' \after \restrict c S$, and $k \after a = a' \after f$, where we set $c \defeq f \cop {\id[I]} \cop {\id[J]}$.
  \end{itemize}
  When $I = J = \emptyset$, we just write $\DirGraph$ and call its objects \emph{directed graphs}.
\end{definition}

\begin{remark} \label{rem:DirGraph_intuition}
  We think of a directed $(I,J)$-graph as a directed graph with $I$ inputs and $J$ outputs.
  Under this viewpoint we have: $N$ is the set of \emph{vertices} (or nodes), $F$ is the set of \emph{internal flags} (or half-edges), $a$ describes to which vertex a flag is attached, $S$ is the set of those flags which are \emph{sources} (i.e.\ outgoing) which includes the inputs, similarly $T$ is the set of those flags which are \emph{targets} (i.e.\ incoming) which includes the outputs, and $\mu$ describes the (directed) edges (i.e.\ $s \in S$ and $\mu(s) \in T$ are connected and form an edge).
\end{remark}

\begin{lemma} \label{lemma:Match_sgn_alt}
  There is an isomorphism from $\sgn_{n, m}$ to the $\Match{I,J}$-module given by
  \[ (S, A, B, \mu)  \longmapsto  (\shift[m] \basefield)^{\tensor A} \]
  where a morphism $(f, g, h) \colon (S, A, B, \mu) \to (S', A', B', \mu')$ of $\Match{I,J}$ acts by permuting the factors according to $\restrict c A \colon A \to A'$, where $c \defeq f \cop g \cop h$.
  This isomorphism maps $\shift[m \card A] 1 \in \sgn_{n, m}$ to $\epsilon \mult \shift[m \card A] 1 \in (\shift[m] \basefield)^{\tensor A}$ where $\epsilon$ is the sign incurred by permuting
  \[ \Tensor_{t \in S \cop I \cop J} h_t  \quad \text{into} \quad  \Tensor_{a \in A} (h_a \tensor h_{\mu(a)}) \]
  where $h_t \defeq \shift[n] 1$ if $t \in A$ and $h_t \defeq \shift[m-n] 1$ if $t \in B$.
\end{lemma}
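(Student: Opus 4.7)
The plan is to define the isomorphism on each object by the explicit formula given in the statement and then verify it intertwines the two $\Match{I,J}$-actions. For a fixed object $(S, A, B, \mu)$, both $\sgn_{n,m}(S, A, B, \mu) = (\shift[n] \basefield, \shift[m-n] \basefield)^{\tensor (A, B)}$ and $(\shift[m] \basefield)^{\tensor A}$ are 1-dimensional graded vector spaces concentrated in degree $-m \card A$ (since $n + (m-n) = m$ for each pair). Hence sending the canonical generator $\Tensor_{t \in S \cop I \cop J} h_t$ of the source to $\epsilon \mult \shift[m\card A] 1$ in the target, with $\epsilon$ as in the statement, gives a well-defined vector space isomorphism at each object.

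The content of the lemma is then naturality. Fix a morphism $(f, g, h) \colon (S, A, B, \mu) \to (S', A', B', \mu')$ of $\Match{I, J}$ and set $c \defeq f \cop g \cop h$. The compatibility condition $\restrict c B \after \mu = \mu' \after \restrict c A$ says precisely that $c$, viewed as a bijection of $S \cop I \cop J$ with $S' \cop I \cop J$, sends each pair $(a, \mu(a))$ to the pair $(c(a), \mu'(c(a)))$. Writing $\epsilon, \epsilon'$ for the signs associated to source and target via the formula in the statement, and $\eta$ for the Koszul sign of $\restrict c A$ acting on $(\shift[m] \basefield)^{\tensor A}$ (i.e.\ on $\card A$ identical degree-$m$ factors), naturality amounts to the sign identity
\[ \sgn_{A, B}(c) \cdot \epsilon' \;=\; \eta \cdot \epsilon. \]

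This identity follows from the general fact that Koszul signs compose and are compatible with regrouping of tensor factors along a partition that is preserved by the permutation. Concretely, the Koszul sign incurred by permuting $\Tensor_{t \in S \cop I \cop J} h_t$ to $\Tensor_{t' \in S' \cop I \cop J} h'_{t'}$ via $c$ can be computed in two ways: directly, yielding $\sgn_{A, B}(c)$ by the definition in \Cref{not:oMatch_tensor}; or in three stages, namely (i) pairing up via the source permutation (sign $\epsilon$), (ii) applying the induced permutation $\restrict c A$ on the pairs, each now regarded as a degree-$m$ unit (sign $\eta$, since $c$ preserves the pair structure), and (iii) undoing the pairing on the target (sign $\epsilon'$, as it is its own inverse). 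Equating the two computations yields the identity above.

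The main obstacle is thus the sign bookkeeping, but conceptually there is nothing to it beyond the observation that Koszul signs of permutations can be computed on grouped factors whenever the permutation preserves the grouping. The explicit formula for the image of $\shift[m \card A] 1$ claimed in the lemma is immediate from the construction.
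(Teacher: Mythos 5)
Your proof is correct, but you take a more hands-on route than the paper. The paper's proof is a categorical one-liner: it defines two functors $p, q \colon \Match{I,J} \to \Match{\emptyset,\emptyset}$, with $p$ sending $(S,A,B,\mu)$ to $(S \cop I \cop J, A, B, \mu)$ and $q$ sending it to the same underlying set reordered so that each $a \in A$ immediately precedes $\mu(a)$; the identity on underlying sets is then a natural isomorphism $p \to q$, and composing with the $\Match{\emptyset,\emptyset}$-module $\sgn_{n,m}$ automatically produces the desired isomorphism with naturality for free. Your proof unpacks this: you observe that $\sgn_{n,m} \after p$ is the source module and $\sgn_{n,m} \after q$ is the target, and then verify by hand the sign identity $\sgn_{A,B}(c) \cdot \epsilon' = \eta \cdot \epsilon$ that the paper delegates to the functoriality of $\sgn_{n,m}$. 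The two computations you compare (going directly from the source ordering to the target ordering, versus passing through the paired groupings) are exactly the two paths that the paper's natural transformation makes commute. Your argument for the identity—that a Koszul sign can be computed on grouped degree-$m$ factors whenever the permutation preserves the grouping, which follows from $\restrict{c}{B} \after \mu = \mu' \after \restrict{c}{A}$—is correct, and it is the same observation that makes the paper's functor $q$ well defined on morphisms. The explicit route buys a clearer view of where the signs come from, at the cost of the paper's conciseness. One minor slip: $\shift[m\card A] \basefield$ is concentrated in degree $m \card A$, not $-m \card A$ (recall $(\shift[k] V)_n = V_{n-k}$), though this does not affect the argument.
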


\begin{proof}
  Let $p, q \colon \Match{I,J} \to \Match{\emptyset,\emptyset}$ be the canonical functors given, on objects, by
  \begin{align*}
    p (S, A, B, \mu) &= (S \cop I \cop J, A, B, \mu) \\
    q (S, A, B, \mu) &= (T, A, B, \mu)
  \end{align*}
  where $T$ is equal to $S \cop I \cop J$ as a set but is equipped with the linear order uniquely determined by the following two properties:
  \begin{itemize}
    \item The element $a \in A$ is the predecessor of $\mu(a) \in B$.
    \item Its restriction to $A$ agrees with the linear order on $A$ obtained by restricting the one of $S \cop I \cop J$.
  \end{itemize}
  The identity of the underlying sets determines a natural isomorphism $p \to q$.
  The map in the statement is obtained by applying $\sgn_{n, m}$ to this natural transformation.
\end{proof}

The following proposition contains the desired description of the $(\Symm I \times \Symm J)$-module $\colim{i \in \NNo} \coinv {( \Schur {\bijmod P} {\Schur {\bijmod Q} {H_i}} \tensor V_i^{\tensor I} \tensor W_i^{\tensor J} )} {\Gamma_i}$.
(See \cite[Theorem~9.12]{BM} for a similar statement in the case where $H$ is concentrated in a single degree and $I = J = \emptyset$.)

\begin{proposition} \label{lemma:coinv_double_Schur}
  Let $I$ and $J$ be finite linearly ordered sets, and let $\bijmod P$ and $\bijmod Q$ be $\oBij$-modules.
  Then there is an isomorphism of $(\Symm I \times \Symm J)$-modules
  \[ \colim{(F, N, S, T, \mu, a) \in \DirGraph[{\id[I],\id[J]}]} \big( \bijmod P(N) \tensor \bijmod Q(a) \tensor (\shift[m] \basefield)^{\tensor S} \big)  \xlongto{\iso}  \colim{i \in \NNo} \coinv {\big( \Schur {\bijmod P} {\Schur {\bijmod Q} {H_i}} \tensor V_i^{\tensor I} \tensor W_i^{\tensor J} \big)} {\Gamma_i} \]
  where a morphism $(f, k)$ of $\DirGraph[{\id[I],\id[J]}]$ acts on $\bijmod P(N)$ by $k$, on $\bijmod Q(a)$ by $(f, k)$ considered as a morphism of $\oBij \comma \oBij$, and on $(\shift[m] \basefield)^{\tensor S}$ by permuting the factors according to $\restrict{c}{S}$ where $c \defeq f \cop {\id[I]} \cop {\id[J]}$.
  The action of an element $(g, h) \in \Symm I \times \Symm J$ on the left hand side is given by sending an element represented by $(F, N, S, T, \mu, a) \in \DirGraph[{\id[I],\id[J]}]$ and $x \in \bijmod P(N) \tensor \bijmod Q(a) \tensor (\shift[m] \basefield)^{\tensor S}$ to the element represented by $(F, N, S, T, \restrict c T \after \mu \after \inv {(\restrict c S)}, a)$ and $\sgn(g)^m x$, where $c \defeq {\id[F]} \cop g \cop h$.
  
  Moreover, this isomorphism can be chosen such that, for any compatible basis $(e_j)_{j \in \NNpos}$ of the $V_i$, it maps an element represented by $(F, N, S, T, \mu, a) \in \DirGraph[{\id[I],\id[J]}]$ and
  \[ p \tensor \eTensor_{v \in N} q_v \tensor \shift[m \card S] 1  \in  \bijmod P(N) \tensor \bijmod Q(a) \tensor (\shift[m] \basefield)^{\tensor S} \]
  to the element
  \[ \epsilon \mult \eqcl[\big] { p \tensor \eTensor_{v \in N} \big( q_v \tensor \eTensor_{f \in \inv a(v)} h_f \big) \tensor \eTensor_{f \in I \cop J} h_f }  \in  \colim{i \in \NNo} \coinv {\big( \Schur {\bijmod P} {\Schur {\bijmod Q} {H_i}} \tensor V_i^{\tensor I} \tensor W_i^{\tensor J} \big)} {\Gamma_i} \]
  where
  \[ h_f \defeq
       \begin{cases}
         e_{\inv s(f)} \in V, & \text{if } f \in S \\
         \dual{\shift[m] e_{\inv s(\inv \mu (f))}} \in \shift[m] \dual V, & \text{if } f \in T
       \end{cases}
  \]
  and $s \colon \finset{\card S} \to S$ is the unique order-preserving bijection.
  The sign $\epsilon$ is the sign incurred by permuting
  \[ \eTensor_{v \in N} q_v \tensor \eTensor_{s \in S} (h_s \tensor h_{\mu(s)})  \quad \text{into} \quad  \eTensor_{v \in N} \big( q_v \tensor \eTensor_{f \in \inv a(v)} h_f \big) \tensor \eTensor_{f \in I \cop J} h_f \]
  according to the Koszul sign rule.
\end{proposition}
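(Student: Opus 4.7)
The plan is to derive this proposition by combining the three earlier tools: \cref{lemma:compprod_and_Schur}, \cref{lemma:coinv_Schur}, and \cref{lemma:Match_sgn_alt}. First I would apply \cref{lemma:compprod_and_Schur} naturally in $H_i$ to get an isomorphism
\[ \Schur{\bijmod P}{\Schur{\bijmod Q}{H_i}} \iso \Schur{\bijmod P \compprod \bijmod Q}{H_i} \]
of $\Gamma_i$-modules. Then I would apply \cref{lemma:coinv_Schur} to the $\oBij$-module $\bijmod P \compprod \bijmod Q$ to identify the target colimit as
\[ \colim{(F, S, T, \mu) \in \Match{\id[I], \id[J]}} \big( (\bijmod P \compprod \bijmod Q)(F) \tensor \sgn_{n,m} \big). \]

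Next, I would unfold the definition of the composition product, writing
\[ (\bijmod P \compprod \bijmod Q)(F) = \colim{(a \colon F \to N) \in F \comma \oBij} \big( \bijmod P(N) \tensor \bijmod Q(a) \big), \]
and then commute this colimit with the outer one. The combined colimit should be indexed by a Grothendieck-style category whose objects are tuples $(F, N, S, T, \mu, a)$ as in \cref{def:dirgraph}, and whose morphisms are those of $\Match{\id[I], \id[J]}$ enriched with a compatible bijection on the set of vertices $N$. Verifying carefully that this combined indexing category is canonically equivalent to $\DirGraph[\id[I], \id[J]]$ is the main bookkeeping step: one needs to check that the slice $F \comma \oBij$, viewed as a category of $\bijmod Q$-labelings, fibers correctly over $\Match{\id[I], \id[J]}$ and that morphisms act on $\bijmod Q(a)$ through their induced morphism in $\oBij \comma \oBij$ as described.

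Having identified the indexing category, I would apply \cref{lemma:Match_sgn_alt} to replace the $\sgn_{n,m}$-factor by $(\shift[m] \basefield)^{\tensor S}$ inside the colimit, absorbing the sign $\epsilon$ of that lemma into the identification; this turns an abstract sign representation into a transparent tensor product with explicit $\Symm S$-action by the linear-order-induced permutation. The functoriality on $(\shift[m] \basefield)^{\tensor S}$ then becomes the permutation induced by $\restrict{c}{S}$, as claimed.

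The one genuinely delicate point is the tracking of signs throughout, particularly the action of $(g, h) \in \Symm I \times \Symm J$ on the left-hand side, where only the factor $\sgn(g)^m$ survives. This drops out of \cref{lemma:coinv_Schur} and \cref{lemma:Match_sgn_alt} once one observes that permuting outputs $J$ among themselves has even parity contribution (as inputs and outputs are paired via $\mu$, only one side produces a sign, and under the reordering of \cref{lemma:Match_sgn_alt} this sign concentrates on the source side $S$). Finally, for the explicit formula, I would chase the composite isomorphism on the class represented by $(F, N, S, T, \mu, a)$ together with $p \tensor \Tensor_{v \in N} q_v \tensor \shift[m \card S] 1$, using the explicit representatives supplied by \cref{lemma:coinv_Schur} and by \cref{lemma:Match_sgn_alt}; the Koszul sign $\epsilon$ in the statement is exactly the product of the reordering sign of \cref{lemma:Match_sgn_alt} and the reshuffling needed to group the flags $\inv a(v)$ by vertex $v$.
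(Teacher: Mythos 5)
Your proposal follows the paper's proof essentially step for step: apply \cref{lemma:compprod_and_Schur} and \cref{lemma:coinv_Schur} (together with \cref{lemma:Match_sgn_alt} to swap $\sgn_{n,m}$ for $(\shift[m]\basefield)^{\tensor S}$), unfold the composition product as a colimit over $F\comma\oBij$, and then identify the combined indexing category—namely the Grothendieck construction of $(F,S,T,\mu)\mapsto F\comma\oBij$ over $\Match{\id_I,\id_J}$—with $\DirGraph[\id_I,\id_J]$, which is exactly what the paper does. The only minor quibble is that your heuristic for the sign $\sgn(g)^m$ (attributing it to a "concentration" of signs on the source side via $\mu$) is a bit informal; the cleaner way to see it is that, after the reordering of \cref{lemma:Match_sgn_alt}, the sign representation is literally $(\shift[m]\basefield)^{\tensor S}$, and since $I \subseteq S$ while $J \subseteq T$, the pair $(g,h)$ acts on this factor only through $g$ on $I \subseteq S$, each transposition carrying degree $m$—but this is the same computation you're gesturing at, and the paper itself leaves these sign checks to the reader by appealing to the explicit descriptions in the earlier lemmas.
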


\begin{proof}
  We have isomorphisms
  \begin{align*}
    \colim{(F, S, T, \mu) \in \Match{\id[I],\id[J]}} \big( (\bijmod P \compprod \bijmod Q)(F) \tensor (\shift[m] \basefield)^{\tensor S} \big)  &\iso  \colim{i \in \NNo} \coinv {\big( \Schur {(\bijmod P \compprod \bijmod Q)} {H_i} \tensor V_i^{\tensor I} \tensor W_i^{\tensor J} \big)} {\Gamma_i} \\
      &\iso  \colim{i \in \NNo} \coinv {\big( \Schur {\bijmod P} {\Schur {\bijmod Q} {H_i}} \tensor V_i^{\tensor I} \tensor W_i^{\tensor J} \big)} {\Gamma_i}
  \end{align*}
  by \Cref{lemma:coinv_Schur,lemma:compprod_and_Schur,lemma:Match_sgn_alt}.
  Moreover, by definition,
  \[ (\bijmod P \compprod \bijmod Q)(F) = \colim{(a \colon\! F \to N) \in F \comma \oBij} \big( \bijmod P(N) \tensor \bijmod Q(a) \big) \]
  so that the leftmost colimit above is isomorphic to
  \[ \colim{(F, S, T, \mu) \in \Match{\id[I],\id[J]}} \; \colim{(a \colon\! F \to N) \in F \comma \oBij} \big( \bijmod P(N) \tensor \bijmod Q(a) \tensor (\shift[m] \basefield)^{\tensor S} \big) \]
  and hence it is enough to show that the Grothendieck construction of the covariant functor $U \colon \Match{\id[I],\id[J]} \to \Cat$ given by $(F, S, T, \mu) \mapsto F \comma \oBij$ (where a map acts by precomposition with its inverse) is canonically isomorphic to $\DirGraph[{\id[I],\id[J]}]$.
  For this we define a functor $\gc U \to \DirGraph[{\id[I],\id[J]}]$, given, on objects respectively morphisms, by
  \begin{align*}
    \big( (F, S, T, \mu), a \colon F \to N \big) &\longmapsto \big( F, N, S, T, \mu, a \big) \\
    \big( (f \colon F \to F', \id[I], \id[J]), k \colon a \after \inv f \to a' \big) &\longmapsto \big( f, \pr(k) \big)
  \end{align*}
  where $\pr \colon F \comma \oBij \to \oBij$ is the projection.
  This is clearly an isomorphism.
  
  The explicit description of the $(\Symm I \times \Symm J)$-action and the explicit description of the isomorphism follow from the explicit descriptions in \Cref{lemma:coinv_Schur,lemma:compprod_and_Schur,lemma:Match_sgn_alt}.
\end{proof}

\subsection{Coinvariants of CE chains of convolution Lie algebras} \label{sec:coinv_conv}

In this subsection our goal is to compute the $(\Symm I \times \Symm J)$-module
\begin{equation} \label{eq:stable_CE_conv}
  \colim{i \in \NNo} \coinv {\Big( \CEchains * \big( \Schur {(\shift[-m] \operad C)} {H_i} \big) \tensor V_i^{\tensor I} \tensor W_i^{\tensor J} \Big)} {\Gamma_i}
\end{equation}
where $\operad C$ is a cyclic operad, and $H_i$, $V_i$, $W_i$, and $\Gamma_i$ are as in the preceding subsections.
(For ease of dealing with signs, we will only consider the case where $\operad C$ is concentrated in even degrees.)
The cocommutative coalgebra structure on the Chevalley--Eilenberg chains induces a cocommutative coalgebra structure on \eqref{eq:stable_CE_conv} when $I = J = \emptyset$, and the structure of a comodule over this coalgebra for general $I$ and $J$.
We will also identify this coalgebra (resp.\ comodule) structure explicitly.

By \cref{lemma:CEchains_as_Schur}, there is an isomorphism
\[ \CEchains * \big( \Schur {(\shift[-m] \operad C)} {H_i} \big)  \iso  \Schur {(\freegca \shift)} {\Schur {(\shift[-m] \operad C)} {H_i}} \]
of graded vector spaces.
Hence we can apply \cref{lemma:coinv_double_Schur} to obtain a description of the underlying graded vector space of \eqref{eq:stable_CE_conv}.
The rest of this subsection is concerned with also identifying its differential in these terms.
We begin with some useful auxiliary definitions.

\begin{definition}
  Let $I$ and $J$ be finite linearly ordered sets and $(F, N, S, T, \mu, a)$ a directed $(I,J)$-graph.
  For a vertex $v \in N$ we set:
  \begin{itemize}
    \item $\deg v \defeq \card{\inv a(v)}$ its \emph{degree}.
    \item $\outdeg v \defeq \card{\inv a(v) \intersect S}$ its \emph{out-degree}.
    \item $\indeg v \defeq \card{\inv a(v) \intersect T}$ its \emph{in-degree}.
    \item $\weight(v) \defeq n \outdeg{v} + (m - n) \indeg{v}$ its \emph{weight}.
  \end{itemize}
\end{definition}

\begin{definition} \label{def:edge_contraction}
  Let $I$ and $J$ be finite linearly ordered sets, $G = (F, N, S, T, \mu, a)$ a directed $(I,J)$-graph, and $s \in F \intersect S$ an internal outgoing flag.
  Moreover we set $v \defeq a(s)$ and $v' \defeq a(\mu(s))$ and assume that $v \neq v'$.
  In this situation, we denote by $\contract{s}(G)$ the directed $(I,J)$-graph $(F', N', S', T', \mu', a')$ with
  \begin{itemize}
    \item $F' \defeq F \setminus \set{s, \mu(s)}$ and $N' \defeq \quot N {(v \sim v')}$,
    \item $S' \defeq S \setminus \set s$ and $T' \defeq T \setminus \set{\mu (s)}$,
    \item $\mu' \defeq \restrict \mu {S'}$ and $a' \defeq {\pr} \after (\restrict a {F'})$, where $\pr \colon N \to N'$ is the quotient map.
  \end{itemize}
  Here $N'$ has the linear order such that $\pr(v) = \pr(v')$ is the smallest element and such that $\pr$ is order-preserving on all other elements.
  The set $F'$ is equipped with the linear order determined as follows: the elements of $\inv a({\set{v, v'}})$ are smaller than all other elements; these other elements have the order restricted from $F$; on $\inv a({\set{v, v'}})$ the order is given by
  \[ \inv a(v)_{< s} < \inv a(v')_{> \mu(s)} < \inv a(v')_{< \mu(s)} < \inv a(v)_{> s} \]
  where on each block the order is the one restricted from $F$.
  (This convention is chosen such that it interacts nicely with the composition operations of a cyclic operad of \cref{rem:cyclic_operad_comp}.)
\end{definition}

\begin{definition}
  Let $I$ and $J$ be finite linearly ordered sets and $G = (F, N, S, T, \mu, a)$ a directed $(I,J)$-graph.
  A \emph{neighbor-closed vertex set} of $G$ is a subset $N' \subseteq N$ such that $\mu(S \intersect \inv a(N')) = T \intersect \inv a(N')$.
  A \emph{closed connected component} of $G$ is a neighbor-closed vertex set that is inclusion minimal among non-empty neighbor-closed vertex sets.
  
  A neighbor-closed vertex set $N'$ of $G$ determines a directed $(\emptyset,\emptyset)$-graph and a directed $(I,J)$-graph
  \begin{align*}
    \induce[G](N')  &\defeq  \big(F', N', S', T', \restrict \mu {S'}, \restrict a {F'} \big) \in \DirGraph \\
    \coinduce[G](N')  &\defeq  \big(F \setminus F', N \setminus N', S \setminus S', T \setminus T', \restrict \mu {S \setminus S'}, \restrict a {F \setminus F'} \big) \in \DirGraph[{\id[I],\id[J]}]
  \end{align*}
  where $F' \defeq \inv a(N')$, $S' \defeq S \intersect \inv a(N')$, and $T' \defeq T \intersect \inv a(N')$.
\end{definition}

\begin{remark} \label{rem:neighbor-closed}
  Any neighbor-closed vertex set of $G$ is a disjoint union of a unique set of closed connected components of $G$.
  Moreover, when $I = J = \emptyset$ and $N' \subseteq N$ is a neighbor-closed vertex set of a directed graph $G = (F, N, S, T, \mu, a)$, then $N \setminus N'$ is one as well, and we have $\induce[G](N \setminus N') = \coinduce[G](N')$.
\end{remark}

\begin{definition} \label{def:dgc}
  Let $I$ and $J$ be linearly ordered sets.
  For a $\oBij$-module $\bijmod C$, we define the $(\Symm I \times \Symm J)$-module of \emph{$\bijmod C$-decorated directed $(I,J)$-graphs} to be
  \[ \DGC[I,J] m {\bijmod C}  \defeq  \colim{\DirGraph[{\id[I],\id[J]}]} \Dec m {\bijmod C; I, J} \]
  (when $I = J = \emptyset$, then we omit them from the notation).
  Here $\Dec m {\bijmod C; I, J}$ is the functor $\DirGraph[{\id[I],\id[J]}] \to \GrVect$ given, on a directed $(I,J)$-graph $G \defeq (F, N, S, T, \mu, a)$, by
  \[ \Dec m {\bijmod C; I,J} (G)  \defeq  (\shift \basefield)^{\tensor N} \tensor (\shift[-m] \bijmod C)(a) \tensor (\shift[m] \basefield)^{\tensor S} \]
  where a morphism $(f, k)$ of $\DirGraph[{\id[I],\id[J]}]$ acts on $(\shift[-m] \bijmod C)(a)$ by $(f, k)$ considered as a morphism of $\oBij \comma \oBij$, on $(\shift \basefield)^{\tensor N}$ by permuting the factors according to $k$, and on $(\shift[m] \basefield)^{\tensor S}$ by permuting the factors according to $\restrict {(f \cop {\id[I]} \cop {\id[J]})} S$.
  The action of an element $(g, h) \in \Symm I \times \Symm J$ on $\DGC[I,J] m {\bijmod C}$ is given by sending an element represented by $(F, N, S, T, \mu, a) \in \DirGraph[{\id[I],\id[J]}]$ and $x \in \Dec m {\bijmod C; I, J}$ to the element represented by $(F, N, S, T, \restrict c T \after \mu \after \inv {(\restrict c S)}, a)$ and $\sgn(g)^m x$, where $c \defeq {\id[F]} \cop g \cop h$.
  
  Given a family $\big( \xi_v \in \bijmod C(\inv a(v)) \big)_{v \in N}$ of elements of $\bijmod C$, we call
  \[ \shift[\card N] 1 \tensor \eTensor_{v \in N} \shift[-m] \xi_v \tensor \shift[m \card S] 1 \in \Dec m {\bijmod C} (G) \]
  the element \emph{represented} by ($G$ and) the decorations $\xi_v$.
  
  If $\bijmod C$ is concentrated in even degrees and equipped with the structure of a cyclic operad, then $\DGC[I,J] m {\bijmod C}$ becomes a chain complex, which we will call the \emph{directed $(I,J)$-graph complex} associated to $\bijmod C$.
  We now describe its differential $d$.
  For every directed $(I,J)$-graph $G \defeq (F, N, S, T, \mu, a)$, every $y \in \Dec m {\bijmod C; I, J} (G)$, and every internal outgoing flag $s \in S \intersect F$ (or, equivalently, every internal edge $e \defeq (s, \mu(s))$) we define an element $d^G_s(y) \in \DGC[I,J] m {\bijmod C}$ by specifying the following properties:
  \begin{itemize}
    \item For every isomorphism $\chi \colon G \to G'$ of directed $(I,J)$-graphs holds that $d^G_s(y) = d^{G'}_{\chi(s)}(\chi \act y)$.
    \item If $a(s) = a(\mu(s))$, i.e.\ if $e$ is a loop, then $d^G_s(y) = 0$.
    \item Assume that $v \defeq a(s)$ and $v' \defeq a(\mu(s))$ are, in this order, the first two elements of $N$.
    Moreover assume that $a$ is order-preserving, and that $s$ is the last element of $\inv a(v)$ and that $\mu(s)$ is the first element of $\inv a(v')$.
    Then $d^G_s$ applied to the element represented by some decorations $\xi_v$ is $(-1)^{m (\outdeg{v} - 1)}$ times the element represented by the directed $(I,J)$-graph $\contract{s}(G)$ decorated by $\xi_v \opcomp[s]{\mu(s)} \xi_{v'}$ at the collapsed vertex $\pr(v) = \pr(v')$ and by $\xi_w$ at all other vertices $w$.
  \end{itemize}
  For $x \in \DGC[I,J] m {\bijmod C}$ the element represented by $G$ and $y$, we set $d(x) \defeq \sum_{s \in S \intersect F} d^G_s(y)$.
  
  When $I = J = \emptyset$, we equip $\DGC m {\bijmod C}$ with the structure of a cocommutative differential graded coalgebra.
  The counit $\epsilon \colon \DGC m {\bijmod C} \to \basefield$ sends the element represented by the empty graph $\emptyset$ and $1 \in \QQ = \Dec m {\bijmod C} (\emptyset)$ to $1$ and an element represented by any other graph to $0$.
  The comultiplication $\Delta \colon \DGC m {\bijmod C} \to \DGC m {\bijmod C} \tensor \DGC m {\bijmod C}$ is given as follows: for an element $x \in \DGC m {\bijmod C}$ represented by $G$ and $(\xi_v)_{v \in N}$, we set
  \[ \Delta(x)  \defeq  \sum_{N'} (-1)^{m \card {N'} \card{N \setminus N'}} \epsilon(N') x_{N'} \tensor x_{N \setminus N'} \]
  where the sum runs over all neighbor-closed vertex subsets $N'$ of $G$ and $x_A$ is the element of $\DGC m {\bijmod C}$ represented by $\induce[G](A)$ and $(\xi_v)_{v \in A}$.
  The sign $\epsilon(N') \in \set {\pm 1}$ is the sign incurred by permuting
  \begin{gather*}
    (\shift 1)^{\tensor N} \tensor \Tensor_{v \in N} \shift[-m] \xi_v \tensor (\shift[m] 1)^{\tensor S} \\
    \shortintertext{into}
    (\shift 1)^{\tensor N'} \tensor \Tensor_{v \in N'} \shift[-m] \xi_v \tensor (\shift[m] 1)^{\tensor S'} \tensor (\shift 1)^{\tensor N \setminus N'} \tensor \Tensor_{v \in N \setminus N'} \shift[-m] \xi_v \tensor (\shift[m] 1)^{\tensor S \setminus S'}
  \end{gather*}
  where $S' \defeq S \intersect \inv a(N')$.
  
  For general $I$ and $J$, we equip $\DGC[I,J] m {\bijmod C}$ with the structure of a differential graded left comodule over $\DGC m {\bijmod C}$.
  Its structure map $\rho \colon \DGC[I,J] m {\bijmod C} \to \DGC m {\bijmod C} \tensor \DGC[I,J] m {\bijmod C}$ is given as follows: for an element $x \in \DGC[I,J] m {\bijmod C}$ represented by $G$ and $(\xi_v)_{v \in N}$, we set
  \[ \rho(x)  \defeq  \sum_{N'} (-1)^{m \card {N'} \card{N \setminus N'}} \epsilon(N') x_{N'} \tensor x'_{N'} \]
  where the sum runs over all neighbor-closed vertex subsets $N'$ of $G$, $x_{N'}$ is the element of $\DGC m {\bijmod C}$ represented by $\induce[G](N')$ and $(\xi_v)_{v \in N'}$, and $x'_{N'}$ is the element of $\DGC[I,J] m {\bijmod C}$ represented by $\coinduce[G](N')$ and $(\xi_v)_{v \in N \setminus N'}$.
  The sign $\epsilon(N') \in \set {\pm 1}$ is the same as above.
\end{definition}

\begin{remark}
  It will follow from \Cref{thm:graph_complex_identification} that the differential $d$ of $\DGC m {\bijmod C}_{I,J}$ is well-defined and actually a differential, that, in the case $I = J = \emptyset$, the maps $\epsilon$ and $\Delta$ actually equip it with the structure of a cocommutative differential graded coalgebra, and, for general $I$ and $J$, that $\rho$ actually equips it with the structure of a differential graded comodule.
  
  Note that, for defining the counit and comultiplication (respectively the comodule structure map), it is not necessary for $\operad C$ to be a cyclic operad.
  However, our proof below that they yield the structure of a cocommutative graded coalgebra (respectively comodule) only applies in that case; it is certainly also true without this assumption, though.
  The condition that $\operad C$ is concentrated in even degrees is purely for convenience; it avoids having to deal with even more signs.
\end{remark}

\begin{remark}
  We think of elements of $\DGC m {\bijmod C}_{I,J}$ as being represented by a directed $(I,J)$-graph $(F, N, S, T, \mu, a) \in \DirGraph[{\id[I],\id[J]}]$ together with a decoration $\xi_v \in \bijmod C(\inv a(v))$ for every vertex $v \in N$.
  To understand the action of an $(I,J)$-graph isomorphism, we should additionally think of such an element as being equipped with an orientation on the set of vertices if $m$ is even or an orientation on the set of edges if $m$ is odd; when an $(I,J)$-graph isomorphism flips the orientation of the respective set, it induces an extra sign.
\end{remark}

\begin{remark}
  The sign $(-1)^{m (\outdeg{v} - 1)}$ in the definition of the differential above corresponds to moving the to-be-contracted outgoing flag $s$ to the beginning of $S$.
\end{remark}

\begin{remark}
  In the case of the cyclic commutative operad $\operad C = \Com$, the directed graph complex $\DGC m {\bijmod C}$ (or versions of it) has appeared in the work of, for example, Willwacher \cite[Appendix~K]{Wil}, Živković \cite{Ziv}, and Dolgushev--Rogers \cite{DR}.
\end{remark}

We are now ready to state and prove the main results of this subsection.
(See \cite[Theorem~9.14]{BM} for a similar statement in the case where $H$ is concentrated in a single degree.)

\begin{theorem} \label{thm:graph_complex_identification}
  Let $I$ and $J$ be finite linearly ordered sets, and let $\operad C$ be a cyclic operad concentrated in even degrees.
  Then there is an isomorphism of differential graded $(\Symm I \times \Symm J)$-modules
  \[ \Phi_{I,J} \colon \DGC m {\operad C}_{I,J}  \xlongto{\iso}  \colim{i \in \NNo} \coinv {\Big( \CEchains * \big( \Schur {(\shift[-m] \operad C)} {H_i} \big) \tensor V_i^{\tensor I} \tensor W_i^{\tensor J} \Big) } {\Gamma_i} \]
  where $\Schur {(\shift[-m] \operad C)} {H_i}$ is equipped with the graded Lie algebra structure of \Cref{lemma:Schur_cyclic_Lie}.
  When $I = J = \emptyset$, this is an isomorphism of differential graded coalgebras, and, for general $I$ and $J$, it is compatible with the comodule structures over these algebras.
\end{theorem}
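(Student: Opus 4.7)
The plan is to reduce everything to \Cref{lemma:coinv_double_Schur}, applied to the $\oBij$-modules $\bijmod P = \freegca \shift$ and $\bijmod Q = \shift[-m] \operad C$, and then match the resulting combinatorial structure with \Cref{def:dgc}. More precisely, \Cref{lemma:CEchains_as_Schur} gives a canonical identification
\[ \CEchains * \big( \Schur {(\shift[-m] \operad C)} {H_i} \big)  \iso  \Schur {(\freegca \shift)} {\Schur {(\shift[-m] \operad C)} {H_i}} \]
of graded vector spaces. Feeding this into \Cref{lemma:coinv_double_Schur} and unpacking the definition of $\Dec m {\operad C; I, J}$ in \Cref{def:dgc} shows that both sides of the desired isomorphism are, as graded $(\Symm I \times \Symm J)$-modules, the colimit of the same functor $\DirGraph[{\id[I],\id[J]}] \to \GrVect$. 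This yields the underlying isomorphism $\Phi_{I,J}$; I would simply take the explicit description in \Cref{lemma:coinv_double_Schur} as a recipe for it.

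The next step is to verify that $\Phi_{I,J}$ is a map of chain complexes. The graded Lie algebra $\Schur {(\shift[-m] \operad C)} {H_i}$ of \Cref{lemma:Schur_cyclic_Lie} carries the zero internal differential, so only the bracket-contribution $d_1$ of the Chevalley--Eilenberg differential (see \Cref{not:CEchains}) survives. On a representative in which the two Lie algebra elements to be bracketed sit in the first two wedge positions, $d_1$ merges these two wedge factors into a single one whose operadic part is $\xi_v \opcomp[s]{\mu(s)} \xi_{v'}$, summed over flag pairs $(s, \mu(s))$. Under the colimit description of the target, picking a representative of a graph with $a$ order-preserving and $v, v'$ in the first two positions of $N$, flag-pair contractions $\opcomp[s]{\mu(s)}$ correspond exactly to the operation $\contract{s}$ of \Cref{def:edge_contraction}, and loop edges (the case $v = v'$) produce zero after taking $\Gamma_i$-coinvariants because, under $\Theta_1$ of \Cref{lemma:reduce_to_end}, they produce a trace of a single-entry matrix which dies after coinvariants. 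This is precisely the differential of $\DGC m {\operad C}_{I,J}$.

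For the coalgebra (resp.\ comodule) structure when $I = J = \emptyset$ (resp.\ general), the comultiplication on $\CEchains *$ of \Cref{not:CEchains} partitions the set of wedge factors, i.e.\ vertices, into two subsets $N'$ and $N \setminus N'$. Passing to the coinvariant-theoretic picture of \cref{lemma:reduce_to_end,cor:from_end_to_grpring,lemma:coinv_double_Schur}, the half-edges at the vertices of each part may pair among themselves to produce internal edges of each subgraph; but any pairing $\mu$ connecting a flag of $N'$ to a flag of $N \setminus N'$ produces an element in which one tensor factor of each side carries an unpaired $V$ or $\shift[m] \dual V$, which therefore lies outside the image of $\Theta_1$ from \Cref{lemma:reduce_to_end} and hence does not contribute. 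The surviving summands are exactly those with $N'$ neighbor-closed, and $\Phi_{I, J}$ sends them to $x_{N'} \tensor x_{N \setminus N'}$ (resp.\ $x_{N'} \tensor x'_{N'}$) as in \Cref{def:dgc}. The counit $\epsilon$ matches trivially since only the empty graph contributes a top wedge of length zero. This also shows that $\Delta$ and $\rho$ are actually well-defined and satisfy the coalgebra/comodule axioms, since they inherit these properties from the Chevalley--Eilenberg side.

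The main obstacle is the sign bookkeeping throughout. There are several competing sign conventions in play: the Koszul signs $b_i, b_{i,j}, \epsilon(A)$ of \Cref{not:CEchains}; the cyclic operad signs in \Cref{not:cyclic_operad_comp} and the arising sign $(-1)^c$ in the definition of $\opcomp[i]{j}$ for $V^{\tensor k}$; the signs $\sgn_{A,B}(c)$, $\sgn(g)^m$, and the $(-1)^{m \card{N'} \card{N \setminus N'}}$ produced by \Cref{lemma:coinv_double_Schur}; and the sign $(-1)^{m(\outdeg v - 1)}$ built into \Cref{def:dgc}. The strategy to manage this is to always choose a canonical representative: take $a \colon F \to N$ order-preserving, place the vertices involved in the operation (bracketing or partitioning) at the beginning of $N$, and place the flags to be contracted at the extremal positions dictated by the cyclic linear order of \Cref{rem:cyclic_operad_comp}. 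With these choices, the flag order prescribed in \Cref{def:edge_contraction} is designed precisely so that the Koszul sign coming from $\opcomp[s]{\mu(s)}$ cancels against the sign coming from reshuffling the $(\shift[m] \basefield)^{\tensor S}$-part in the target of $\Phi_{I,J}$, leaving exactly $(-1)^{m(\outdeg v - 1)}$.
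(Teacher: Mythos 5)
Your overall strategy matches the paper's: invoke \cref{lemma:CEchains_as_Schur,lemma:coinv_double_Schur} to produce the isomorphism of underlying graded $(\Symm I \times \Symm J)$-modules, then verify compatibility with the Chevalley--Eilenberg differential and the coalgebra/comodule structure. However, two concrete claims in your outline are wrong, and the substantive sign computation that makes up most of the actual proof is left as a plan.

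First, your explanation for why loop edges do not contribute to the differential --- that contracting a loop ``produces a trace of a single-entry matrix which dies after coinvariants'' --- does not hold: the trace map $\End V \to \QQ$ is conjugation-invariant, so it survives $\Gamma$-coinvariants. The real reason is more elementary: $d_1$ of \cref{not:CEchains} sums over pairs $i < j$ of \emph{distinct} wedge factors, so every bracket that appears is between two distinct vertices; a loop has both flags at a single vertex and therefore never arises from the expansion of $d_1$ in the first place. Second, ``simply taking the explicit description of \cref{lemma:coinv_double_Schur} as a recipe'' for $\Phi_{I,J}$ does not yield the sign $(-1)^{m(\outdeg{v}-1)}$ of \cref{def:dgc}. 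The paper must multiply that isomorphism by the vertex-count-dependent correction $\gamma(G) = (-1)^{m \frac{1}{2} \card N (\card N + 1) + n \card N}$; after an edge contraction, the product $\gamma(G)\gamma(\contract{s}(G)) = (-1)^{m \card N + n}$ absorbs the leftover sign, and only then does the pulled-back differential match \cref{def:dgc}. Without this twist, your claimed sign would be wrong. Finally, checking that each $d^G_s$ is invariant under directed $(I,J)$-graph isomorphisms, so that the differential is actually well-defined on the colimit, is where the paper spends the bulk of its effort, and your outline does not engage with it beyond naming canonical representatives.
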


\begin{remark} \label{rem:directed_gc_unstable}
  Tracing the dimension restriction of \cref{prop:coinv_monomial} through our arguments, the preceding theorem can be strengthened to an unstable isomorphism in a certain stable range that is compatible with the stabilization maps.
  More precisely we obtain an isomorphism from the differential graded subcoalgebra (resp.\ subcomodule) of $\DGC m {\operad C}_{I,J}$ spanned by the decorated graphs with at most $\dim H_i - \card I - \card J$ internal flags to the differential graded subcoalgebra (resp.\ subcomodule) of $\coinv {\CEchains * \big( \Schur {(\shift[-m] \operad C)} {H_i} \tensor V_i^{\tensor I} \tensor W_i^{\tensor J} \big) } {\Gamma_i}$ spanned by the elementary tensors involving at most $\dim H_i$ elements of $H_i$ (including $V_i$ and $W_i$).
\end{remark}

\begin{proof}[Proof of \cref{thm:graph_complex_identification}]
  \Cref{lemma:CEchains_as_Schur,lemma:coinv_double_Schur} provide us with an isomorphism $\Phi'_{I,J}$ on the underlying $(\Symm I \times \Symm J)$-modules (in graded vector spaces).
  Let $G = (F, N, S, T, \mu, a) \in \DirGraph[{\id[I],\id[J]}]$ be a directed $(I,J)$-graph and
  \begin{equation} \label{eq:pf_identification_element}
    x \defeq (\shift 1)^{\tensor N} \tensor \eTensor_{v \in N} \shift[-m] \xi_v \tensor \shift[m \card S] 1  \quad\in\quad  (\shift \basefield)^{\tensor N} \tensor (\shift[-m] \operad C)(a) \tensor (\shift[m] \basefield)^{\tensor S}
  \end{equation}
  an element.
  Denoting by $\eqcl x$ the element represented by $G$ and $x$, we set
  \begin{align*}
    \Phi_{I,J}(\eqcl x) &\defeq \gamma(G) \Phi'_{I,J}(\eqcl x) \\
    \gamma(G) &\defeq (-1)^{m \frac{1}{2} \card N (\card N + 1) + n \card N}
  \end{align*}
  (the sign $\gamma(G)$ serves to cancel out a sign encountered below).
  We have $\Phi_{I,J}(\eqcl x) = \eqcl {\Phi^G_{I,J}(x)}$ where we set $\Phi^G_{I,J}(x) \defeq \gamma(G) \epsilon(x) y$ with $y$ given by
  \begin{equation} \label{eq:pf_identification_image}
    \begin{aligned}
      y &\defeq \tilde y \tensor \eTensor_{f \in I \cop J} h_f \\
      \tilde y &\defeq \Wedge_{v \in N} \shift y_v \\
      y_v &\defeq \shift[-m] \xi_v \tensor \eTensor_{f \in \inv a(v)} h_f \\
      h_f &\defeq
        \begin{cases}
          e_{\inv s(f)} \in V, & \text{if } f \in S \\
          \dual{\shift[m] e_{\inv s(\inv \mu (f))}} \in \shift[m] \dual V, & \text{if } f \in T
        \end{cases}
    \end{aligned}
  \end{equation}
  (where $s \colon \finset{\card S} \to S$ is the unique order-preserving bijection) and the sign $\epsilon(x) \in \set{\pm 1}$ is the sign incurred by permuting
  \[ (\shift 1)^{\tensor N} \tensor \Tensor_{v \in N} \shift[-m] \xi_v \tensor \Tensor_{s \in S} (h_s \tensor h_{\mu(s)})  \quad \text{into} \quad  \Tensor_{v \in N} \shift \bigg( \shift[-m] \xi_v \tensor \Tensor_{f \in \inv a(v)} h_f \bigg) \tensor \Tensor_{f \in I \cop J} h_f \]
  according to the Koszul sign rule.
  
  We will now identify the result of pulling the Chevalley--Eilenberg differential $d = d_0 + d_1$ (cf.\ \Cref{not:CEchains}) back to $\DGC m {\operad C}_{I,J}$ along the isomorphism $\Phi_{I,J}$.
  To this end, we first evaluate it on $\tilde y$.
  We have $d_0 = 0$ (since the differential of $\Schur {(\shift[-m] \operad C)} {H_i}$ is trivial) and
  \begin{align}
    d_1(\tilde y) &= \sum_{\substack{v, v' \in N \\ v < v'}} (-1)^{b_{v,v'}} \shift \liebr{y_v}{y_{v'}} \wedge \Wedge_{w \in V \setminus \set{v, v'}} \shift y_w \nonumber\\
    \liebr{y_v}{y_{v'}} &= \sum_{\substack{i \in \inv a(v) \\ j \in \inv a(v')}} \shift[-m] (\xi_v \opcomp[i]{j} \xi_{v'}) \tensor (h_v \opcomp[i]{j} h_{v'}) \label{eq:lie_br_ys}\\
    b_{v,v'} &\defeq \deg{\shift y_v} (1 + \deg{\shift y_{v'}}) + \deg{\shift y_v} \sum_{\substack{w \in N \\ w < v}} \deg{\shift y_w} + \deg{\shift y_{v'}} \sum_{\substack{w \in N \\ w < v'}} \deg{\shift y_w} \nonumber
  \end{align}
  where $h_v \defeq \eTensor_{f \in \inv a(v)} h_f$.
  In the following, we will, for ease of notation, identify each $\inv a(v)$ with $\set{1, \dots, \deg v}$ via the unique order-preserving bijection, and set $h_{v;i} \defeq h_{f(v,i)}$ where $f(v,i)$ is the $i$-th element of $\inv a(v)$.
  We recall
  \begin{align*}
    h_v \opcomp[i]{j} h_{v'} &\defeq {(-1)}^{a^{i,j}_{v,v'}} h_{v;1,i-1} \tensor h_{v';j+1,\deg{v'}} \tensor h_{v';1,j-1} \tensor h_{v;i+1,\deg v} \iprod {h_{v';j}} {h_{v;i}} \\
    a^{i,j}_{v,v'} &\defeq (\deg {h_{v;i,\deg v}} - m) (\deg {h_{v'}} - m) + \deg {h_{v';1,j}} \deg {h_{v';j+1,\deg{v'}}} + 1 \\
    h_{w;k,l} &\defeq \eTensor_{i = k}^l h_{w;i}
  \end{align*}
  and note that, by definition of $h_f$, the expression $\iprod {h_{v';j}} {h_{v;i}}$ is zero unless $\set{i, j} = \set{s, \mu(s)}$ for some $s \in S \intersect F$.
  Hence the same is true for the summand in \eqref{eq:lie_br_ys} corresponding to $(i, j)$.
  We thus obtain the following description
  \begin{align*}
    d(y)  &=  \sum_{\substack{s \in S \intersect F \\ a(s) \neq a(\mu(s))}} d^G_s(y) \\
    d^G_s(y)  &\defeq  \delta_s \mult \big( \shift (y_{a(s)} \opcomp[s]{\mu(s)} y_{a(\mu(s))}) \wedge \Wedge_{v \in V \setminus \set{a(s), a(\mu(s))}} \shift y_v \big) \tensor \eTensor_{f \in I \cop J} h_f \\
    y_v \opcomp[i]{j} y_{v'}  &\defeq  \shift[-m] (\xi_v \opcomp[i]{j} \xi_{v'}) \tensor (h_{v} \opcomp[i]{j} h_{v'})
  \end{align*}
  where $\delta_s \in \set{\pm 1}$ is some sign.
  We note that $d^G_s(\Phi^G_{I,J}(x)) \defeq \gamma(G) \epsilon(x) d^G_s(y)$ is equal to some sign $\delta'_s \in \set{\pm 1}$ times the image under $\Phi_{I,J}$ of the element $\eqcl z$ represented by the directed $(I,J)$-graph $\contract{s}(G)$ decorated by $\xi_{a(s)} \opcomp[s]{\mu(s)} \xi_{a(\mu(s))}$ at the collapsed vertex $\pr(a(s)) = \pr(a(\mu(s)))$ and by $\xi_v$ at all other vertices $v$.
  (Note that the family of basis elements $h_\bullet$ obtained for $\contract{s}(G)$ does not agree with the family obtained for the non-contracted flags of $G$. However they are equivalent under the $\Gamma_i$-action.)
  
  We will now determine the sign $\delta'_s$ in a special case.
  To this end, let $s \in S \intersect F$ and $t \defeq \mu(s)$ such that $a(s) \neq a(t)$.
  Assume that $v \defeq a(s)$ and $v' \defeq a(t)$ are, in this order, the first two elements of $N$.
  Moreover assume that $a \colon F \to N$ is order-preserving and that $s$ is the last element of $\inv a(v)$ and that $t$ is the first element of $\inv a(v')$.
  (Note that any directed $(I,J)$-graph is isomorphic to one of this form.)
  We have
  \begin{align*}
    \delta'_s  &=  \gamma(G) \epsilon(x) \iprod {h_{v';t}} {h_{v;s}} (-1)^c \gamma(\contract{s}(G)) \epsilon(z) \\
    c  &\defeq  b_{v,v'}(x) + a_{v,v'}^{s,t}(x)
  \end{align*}
  by definition.
  We first note that $\iprod {h_{v';t}} {h_{v;s}} = 1$, that $\gamma(G) \gamma(\contract{s}(G)) = (-1)^{m \card N + n}$, and that
  \begin{align*}
    c &= \deg{\shift y_v} + \big( \deg{h_{v;s}} - m \big) \big( \deg{h_{v'}} - m \big) + \deg{h_{v';t}} \deg{h_{v';t+1,\deg{v'}}} + 1 \\
      &= \big( 1 - m + \weight(v) \big) + (n - m) \big( {\weight(v')} - m \big) + (m - n) \big( {\weight(v')} - (m - n) \big) + 1
  \end{align*}
  which reduces the question to determining $\epsilon(x) \epsilon(z)$.
  To this end we factor the permutation defining $\epsilon$ into three parts: firstly permuting the $h_f$ among each other, secondly interleaving the $\shift[-m] \xi_v$ and $h_v$, and thirdly interleaving the $\shift 1$ and $y_v$.
  The deviation between $\epsilon(x)$ and $\epsilon(z)$ for the first part is given by $-1$ to the power of
  \[ m \big( \outdeg{v} - 1 \big) + m \big( {\weight(v)} - n \big) \]
  for the second part it is $-1$ to the power of
  \[ \big( \deg{h_s} + \deg{h_t} \big) \big( \card N - 2 \big) (-m) + \deg{h_v} (-m) \equiv \card N m + \weight(v) m  \pmod 2 \]
  and for the the third part it is $-1$ to the power of
  \[ \big( {- m} + \deg{h_s} + \deg{h_t} \big) \card N + \deg{y_v} = -m + \weight(v) \]
  so that, putting everything together, we obtain $\delta'_s = (-1)^{m (\outdeg{v} - 1)}$.
  
  Lastly we show that, for any isomorphism of directed $(I,J)$-graphs $\chi \colon G \to G'$, we have
  \[ \eqcl[\big]{ d^G_s(\Phi^G_{I,J}(x)) }  =  \eqcl[\big]{ d^{G'}_{\chi(s)}(\Phi^{G'}_{I,J}(\chi \act x)) } \]
  which will finish the identification of the differential.
  The statement is clear up to sign, so it is enough to show that those agree as well.
  We write $\chi \act x = \alpha x'$ where $\alpha \in \set{\pm 1}$ and $x'$ is the element represented by $G'$ and the labels $(\chi \act \xi_{\inv \chi(w)})_{w \in N'}$.
  Lastly we define $y'$ analogously to \eqref{eq:pf_identification_image}, so that $\Phi^{G'}_{I,J}(x') = \gamma(G') \epsilon(x') y'$.
  
  By definition, we have $\eqcl {\gamma(G) \epsilon(x) y} = \Phi_{I,J}(\eqcl x) = \Phi_{I,J}(\eqcl {\chi \act x}) = \eqcl {\alpha \gamma(G') \epsilon(x') y'}$.
  Noting that $\gamma(G) = \gamma(G')$, it follows from the definitions that $\alpha \gamma(G) \epsilon(x) \gamma(G') \epsilon(x') = \alpha \epsilon(x) \epsilon(x')$ is the sign induced by permuting
  \[ \Tensor_{v \in N} \shift \bigg( \shift[-m] \xi_v \tensor \Tensor_{f \in \inv a(v)} h_f \bigg)  \quad \text{into} \quad  \Tensor_{w \in N'} \shift \bigg( \shift[-m] \xi_{\inv \chi(w)} \tensor \Tensor_{g \in \inv{a'}(w)} h_{\inv \chi(g)} \bigg)  \]
  (here we use that the families $(h_f)_{f \in F}$ and $(h'_{\chi(f)})_{f \in F}$ are equivalent under the action of $\Gamma_i$ for large enough $i$; hence we can assume that they are equal, which we will do from now on).
  This sign is the product of the sign $\alpha'$ incurred by permuting $\Tensor_{v \in N} \shift y_v$ into $\Tensor_{w \in N'} \shift y_{\inv \chi(w)}$ and the signs $\alpha_v$ incurred by permuting $\Tensor_{f \in \inv a(v)} h_f$ into $\Tensor_{g \in \inv{a'}(\chi(v))} h_{\inv \chi(g)}$.
  
  It remains to evaluate the sign difference between $d^G_s(y)$ and $d^{G'}_{\chi(s)}(y')$.
  For this, write $t \defeq \mu(s)$, and $\set {v, v'} \defeq \set{a(s), a(t)}$ such that $v < v'$.
  Also let $\set {w, w'} \defeq \set {\chi(v), \chi(v')}$ such that $w < w'$.
  Then the relevant summands of $d_1(\tilde y)$ and $d_1(\tilde y')$ are the ones indexed by $(v, v')$ and $(w, w')$.
  Note that $-1$ to the power of $b_{v,v'} + b'_{w,w'}$, the sign difference between $\Wedge_{\tilde v \in V \setminus \set{v, v'}} \shift y_{\tilde v}$ and $\Wedge_{\tilde w \in V' \setminus \set{w, w'}} \shift y'_{\tilde w}$, and $\alpha'$ and $\alpha_{\tilde v}$ for $\tilde v \in V \setminus \set{v, v'}$ cancel each other out, except for, if $\chi$ swaps the order of $v$ and $v'$, a sign of $-1$ to the power of $1 + \deg{y'_w} \deg{y'_{w'}}$.
  Now note that, in the case that $\chi$ swaps the order of $v$ and $v'$, we have $\liebr{y'_{w'}}{y'_w} = (-1)^{1 + \deg{y'_w} \deg{y'_{w'}}} \liebr{y'_w}{y'_{w'}}$ in the summand of $d_1(\tilde y')$.
  Hence the only sign that has not yet canceled is $\alpha_v \alpha_{v'}$.
  
  Thus, it remains to show that the summand of $\liebr{y_v}{y_{v'}}$ corresponding to $(s, t)$ equals the summand of $\liebr{y'_{\chi(v)}}{y'_{\chi(v')}}$ corresponding to $(\chi(s), \chi(t))$, up to a sign of $\alpha_v \alpha_{v'}$.
  (For ease of notation we assume from now on that $a(s) = v$, and we set $w \defeq \chi(v)$, $w' \defeq \chi(v')$, $s' \defeq \chi(s)$, and $t' \defeq \chi(t)$, possibly changing notation from before.)
  These two summands agree in $\Schur {(\shift[-m] \operad C)} {H_i}$ up to sign.
  The difference in signs is given by $-1$ to the power of $a_{v,v'}^{s,t} + a_{w,w'}^{s',t'}$ times the sign incurred by permuting
  \[ h_v \tildeopcomp[s]{t} h_{v'}  \defeq  h_{v;1,s-1} \tensor h_{v';t+1,\deg{v'}} \tensor h_{v';1,t-1} \tensor h_{v;s+1,\deg v} \]
  into $h'_w \tildeopcomp[s']{t'} h'_{w'}$.
  Now we note that $-1$ to the power of $a_{v,v'}^{s,t} + 1 + m + m (\deg {h_{v;s}} + \deg {h_{v'}})$ is precisely the sign incurred by permuting $h_v \tensor h_{v'}$ into $(h_v \tildeopcomp[s]{t} h_{v'}) \tensor h_{v';t} \tensor h_{v;s}$ and analogously for $a_{w,w'}^{s',t'}$.
  Noting that $\deg {h_{v;s}} = \deg {h'_{w;s'}}$ and $\deg {h_{v'}} = \deg {h'_{w'}}$, this implies the claim since $\alpha_v \alpha_{v'}$ is the sign incurred by permuting $h_v \tensor h_{v'}$ into $h'_w \tensor h'_{w'}$.
  
  Now we prove that $\Phi_{\emptyset,\emptyset}$ is a map of graded coalgebras, and that $\Phi_{I,J}$ is compatible with the comodule structures over these coalgebras.
  It is clear that $\Phi_{\emptyset,\emptyset}$ is compatible with the counits.
  We will now identify the comodule structure map $\rho$ of $\DGC m {\operad C}_{I,J}$ obtained by pulling back the one of the Chevalley--Eilenberg chains.
  When $I = J = \emptyset$, this also deals with the comultiplication $\Delta$ of the coalgebra structure since, in this case, we have $\Delta = \rho$ on both sides (see \cref{rem:neighbor-closed}).
  We continue to use the notation of \eqref{eq:pf_identification_element} and \eqref{eq:pf_identification_image}.
  We have
  \begin{equation} \label{eq:pf_identification_CE_coproduct}
    \rho(y)  \defeq  \sum_{N' \subseteq N} \delta(N') \mult \big( {\Wedge_{v \in N'} \shift y_v} \big) \tensor \big( (\Wedge_{w \in N \setminus N'} \shift y_w) \tensor \eTensor_{f \in I \cop J} h_f \big)
  \end{equation}
  where the sum runs over all subsets $N'$ of $N$ and $\delta(N') \in \set{\pm 1}$ is the sign incurred by permuting
  $\eTensor_{v \in N} \shift y_v$ into $\eTensor_{v \in N'} \shift y_v \tensor \eTensor_{w \in N \setminus N'} \shift y_w$.
  Note that (the equivalence class of) $\Wedge_{v \in N'} \shift y_v$ is trivial except when $\mu(S \intersect \inv a(N')) = T \intersect \inv a(N')$.
  Hence we can also consider the sum in \eqref{eq:pf_identification_CE_coproduct} to run only over all neighbor-closed vertex sets $N'$ of $G$.
  For such an $N'$, we set
  \begin{align*}
    x_{N'}  &\defeq  (\shift 1)^{\tensor N'} \tensor \eTensor_{v \in N'} \shift[-m] \xi_v \tensor \shift[m \card {S'}] 1 \\
    x'_{N'}  &\defeq  (\shift 1)^{\tensor N \setminus N'} \tensor \eTensor_{v \in N \setminus N'} \shift[-m] \xi_v \tensor \shift[m \card {S \setminus S'}] 1
  \end{align*}
  where $S' \defeq S \intersect \inv a(N')$.
  We denote the element of $\DGC m {\operad C}$ represented by $\induce[G](N')$ and $x_{N'}$ by $\eqcl {x_{N'}}$, and the element of $\DGC m {\operad C}_{I,J}$ represented by $\coinduce[G](N')$ and $x'_{N'}$ by $\eqcl {x'_{N'}}$.
  We have
  \begin{align*}
    \Phi_{\emptyset,\emptyset}(\eqcl{x_{N'}}) &= \gamma \big( {\induce[G](N')} \big) \epsilon(x_{N'}) \mult \Wedge_{v \in {N'}} \shift y_v \\
    \Phi_{I,J}(\eqcl{x'_{N'}}) &= \gamma \big( {\coinduce[G](N')} \big) \epsilon(x'_{N'}) \mult \big( { \Wedge_{v \in {N \setminus N'}} \shift y_v } \tensor \eTensor_{f \in I \cop J} h_f \big)
  \end{align*}
  so that it only remains to see that
  \[ \gamma(G) \epsilon(x) \delta(N') \gamma \big( {\induce[G](N')} \big) \epsilon(x_{N'}) \gamma \big( {\coinduce[G](N')} \big) \epsilon(x'_{N'}) \]
  is equal to the sign in the definition of the comodule structure of $\DGC m {\operad C}_{I,J}$.
  Noting that
  \[ \gamma(G) \gamma \big( {\induce[G](N')} \big) \gamma \big( {\coinduce[G](N')} \big) = (-1)^{m \card {N'} \card{N \setminus N'}} \]
  this is clear from the definitions.
\end{proof}

\subsection{Coinvariants of the truncation}

In this subsection our goal is to obtain a description of the differential graded $(\Symm I \times \Symm J)$-module
\[ \colim{i \in \NNo} \coinv {\Big( \CEchains * \big( \trunc{ \Schur {(\shift[-m] \operad C)} {H_i} } \big) \tensor V_i^{\tensor I} \tensor W_i^{\tensor J} \Big) } {\Gamma_i} \]
in terms of a graph complex (including the coalgebra/comodule structures, as before).
Here $\operad C$ is a cyclic operad, $H_i$, $V_i$, $W_i$, and $\Gamma_i$ are as in the preceding subsections, and $\trunc {(\blank)}$ denotes positive truncation (see \cref{def:pos_trunc}).
We begin with the following useful definition.

\begin{definition} \label{def:trunc_dgc}
  Let $I$ and $J$ be finite linearly ordered sets.
  A directed $(I,J)$-graph $G$ is \emph{truncated} if it has no vertices of valence zero or one, and each vertex of valence two has only incoming edges.
  We denote by
  \[ \DirGraphtrunc[{\id[I],\id[J]}] \subset \DirGraph[{\id[I],\id[J]}] \]
  the full subgroupoid spanned by the truncated directed $(I,J)$-graphs.
  
  We also denote by $\DGCtrunc m {\operad C}_{I,J} \subseteq \DGC m {\operad C}_{I,J}$ the differential graded $(\Symm I \times \Symm J)$-submodule spanned by the elements represented by truncated decorated $(I,J)$-graphs.
  When $I = J = \emptyset$, we omit them from the notation.
\end{definition}

\begin{remark}
  When $I = J = \emptyset$, we have that $\DGCtrunc m {\operad C} \subseteq \DGC m {\operad C}$ is a subcoalgebra.
  For general $I$ and $J$, the $\DGC m {\operad C}$-comodule structure of $\DGC m {\operad C}_{I,J}$ restricts to a $\DGCtrunc m {\operad C}$-comodule structure on $\DGCtrunc m {\operad C}_{I,J}$.
\end{remark}

\begin{corollary} \label{lemma:trunc_dgc}
  Let $I$ and $J$ be finite linearly ordered sets, and let $\operad C$ be a cyclic operad concentrated in non-negative even degrees such that $\cyc {\operad C} 0$, $\cyc {\operad C} 1$, and $\cyc {\operad C} 2$ are concentrated in degree $0$.
  Assume that $2n < m < 3n$ and set
  \[ \alpha  \defeq  \begin{cases} \frac 2 3, & \text{if $m = 3n - 1$.} \\ 1, & \text{otherwise.} \end{cases} \]
  Then, for all $i \in \NNge{1}$, there is, in homological degrees $\le \alpha \dim H_i$, an isomorphism of differential graded $(\Symm I \times \Symm J)$-modules
  \begin{equation} \label{eq:unstable_iso_trunc}
    \DGCtrunc m {\operad C}_{I,J}  \iso  \coinv {\Big( \CEchains * \big( \trunc {\Schur {(\shift[-m] \operad C)} {H_i}} \big) \tensor V_i^{\tensor I} \tensor W_i^{\tensor J} \Big)} {\Gamma_i}
  \end{equation}
  that is compatible with the stabilization maps on the right hand side.
  When $I = J = \emptyset$, it is an isomorphism of differential graded coalgebras.
  For general $I$ and $J$, it is compatible with the comodule structures over these coalgebras.
  
  In particular, after stabilizing, we obtain an isomorphism
  \[ \DGCtrunc m {\operad C}_{I,J}  \iso  \colim{i \in \NNo} \coinv {\Big( \CEchains * \big( \trunc {\Schur {(\shift[-m] \operad C)} {H_i}} \big) \tensor V_i^{\tensor I} \tensor W_i^{\tensor J} \Big)} {\Gamma_i} \]
  of differential graded $(\Symm I \times \Symm J)$-modules.
  When $I = J = \emptyset$, it is an isomorphism of differential graded coalgebras.
  For general $I$ and $J$, it is compatible with the comodule structures over these coalgebras.
\end{corollary}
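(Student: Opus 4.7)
The plan is to deduce the statement from \cref{thm:graph_complex_identification} together with its unstable strengthening \cref{rem:directed_gc_unstable}. Writing $L_i \defeq \Schur{(\shift[-m] \operad C)}{H_i}$, the key step is to check that, under the isomorphism $\Phi_{I,J}$ of \cref{thm:graph_complex_identification}, the subcomplex $\CEchains{*}(\trunc{L_i}) \tensor V_i^{\tensor I} \tensor W_i^{\tensor J}$ of the right-hand side corresponds exactly to the subcomplex $\DGCtrunc m {\operad C}_{I,J} \subseteq \DGC m {\operad C}_{I,J}$. Since the inclusion $\trunc{L_i} \hookrightarrow L_i$ is a DGLA inclusion, it induces a DG subcoalgebra (respectively subcomodule) inclusion on CE chains; combined with the fact that the truncated graph complex is manifestly a DG subcoalgebra/subcomodule of the full one, the coalgebra and comodule compatibility will follow automatically from the corresponding claims in \cref{thm:graph_complex_identification}.

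The identification at the level of underlying complexes is a direct computation from the explicit description of $\Phi_{I,J}$: the image of the element represented by a decorated directed graph $G = (F, N, S, T, \mu, a)$ with labels $(\xi_v)_{v \in N}$ is, up to sign, the equivalence class of $\Wedge_{v \in N} \shift y_v \tensor \Tensor_{f \in I \cop J} h_f$, where $y_v$ has degree $|\xi_v| + \weight(v) - m$ in $L_i$. Hence this element lies in $\CEchains{*}(\trunc{L_i}) \tensor V_i^{\tensor I} \tensor W_i^{\tensor J}$ if and only if every vertex $v$ satisfies $|\xi_v| + \weight(v) - m \geq 1$. Using that $\cyc{\operad C}{0}, \cyc{\operad C}{1}, \cyc{\operad C}{2}$ sit in degree $0$ and the rest is non-negatively graded, a case analysis on $\deg v$ identifies this condition with $G$ being truncated: valence $0$ and $1$ vertices have weight strictly less than $m$ and are excluded; a valence $2$ vertex has weight $2n$, $m$, or $2(m-n)$, only the last of which is $\geq m + 1$ (using $m \geq 2n + 1$), selecting exactly those with two incoming edges; and for valence $k \geq 3$, the minimum weight $kn$ satisfies $kn - m \geq 3n - m \geq 1$ by the assumption $m \leq 3n - 1$, so the condition is automatic regardless of $|\xi_v|$.

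It remains to verify the range. I would show that a truncated decorated graph in total homological degree $\leq \alpha \dim H_i$ has at most $\dim H_i - \card I - \card J$ internal flags, placing it within the scope of \cref{rem:directed_gc_unstable}. A vertex-by-vertex comparison of $\deg v$ with the minimum homological-degree contribution $1 + \deg y_v$ shows that the ratio $\deg v / (1 + \deg y_v)$ is bounded by $3/2$ exactly in the case $m = 3n - 1$ (achieved by a valence $3$ vertex with all outgoing flags and operadic label of degree zero), and by $1$ otherwise; summing over vertices one obtains that the total number of internal flags is at most $\alpha^{-1}$ times the homological degree. The stabilized statement then follows by passing to the colimit. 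The main obstacle I anticipate is purely bookkeeping: carefully tracking the sign conventions inherited from \cref{thm:graph_complex_identification}, and ensuring that the minor additive correction by $\card I + \card J$ in the range of \cref{rem:directed_gc_unstable} is absorbed into the stated bound, which is harmless for $i$ not too small and in particular causes no issue in the stable limit.
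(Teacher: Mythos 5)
Your approach is essentially the paper's: combine \cref{thm:graph_complex_identification} with the unstable refinement in \cref{rem:directed_gc_unstable}, identify $\CEchains{*}(\trunc{L_i}) \tensor V_i^{\tensor I} \tensor W_i^{\tensor J}$ with $\DGCtrunc m {\operad C}_{I,J}$ via the condition $\deg \xi_v + \weight(v) - m \ge 1$ at every vertex (which the paper phrases equivalently as an explicit description of $\trunc{L_i}$ using $2n < m < 3n$), and then do a degree count to show the unstable isomorphism is everything in degrees $\le \alpha \dim H_i$. Your vertex-by-vertex analysis and the extremal case $m = 3n-1$ with a trivalent all-outgoing vertex are exactly the paper's $\beta = \max(1, 3/(1 - m + 3n))$.

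The one place where you hedge, and shouldn't, is the $\card I + \card J$ correction. You write that it is ``harmless for $i$ not too small and in particular causes no issue in the stable limit,'' but the statement is claimed for all $i \ge 1$, so the correction must actually be absorbed, not just eventually. It is: the paper's computation shows
\[
\beta \deg x + \card I + \card J \;=\; \beta \deg y + (1 - \beta n)\card I + (1 - \beta (m-n)) \card J \;\le\; \beta \deg y,
\]
because $\beta \ge 1$, $n \ge 1$, and $m - n > n \ge 1$ force both correction coefficients to be nonpositive. This uses crucially that the external legs in $V_i^{\tensor I} \tensor W_i^{\tensor J}$ carry homological degree $n$ respectively $m-n$, which is part of the total degree $\deg y$ of an element; without tracking this contribution the bound would indeed be off by an additive constant. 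Relatedly, your argument explicitly checks only that $D'_p$ (the truncated graph side with $\le \dim H_i - \card I - \card J$ internal flags) fills out all of $\DGCtrunc m {\operad C}_{I,J}$ in degree $p \le \alpha \dim H_i$; you should also record that the same inequality gives that $C'_p$ (elementary tensors involving $\le \dim H_i$ elements of $H_i$) fills out all of the right-hand side in that degree range, since a priori the unstable isomorphism only relates the two subcomplexes to each other. Both checks are the same computation, and the paper does them in tandem.
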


\begin{proof}
  The condition $2n < m < 3n$ implies that
  \[ \trunc {\Schur {(\shift[-m] \operad C)} {H_i}}  =  \big( \shift[-m] \cyc {\operad C} 2 \tensor[\Symm{2}] (W_i \tensor W_i) \big) \dirsum \Dirsum_{k \ge 3} \shift[-m] \cyc {\operad C} k \tensor[\Symm{k}] H_i^{\tensor k}  \subseteq  \Schur {(\shift[-m] \operad C)} {H_i} \]
  where $W_i$ is the degree $m - n$ part of $H_i$.
  Then the explicit description of the isomorphism of \cref{thm:graph_complex_identification}, together with \cref{rem:directed_gc_unstable}, implies that there is an isomorphism from the differential graded $(\Symm I \times \Symm J)$-submodule $D'_*$ of $\DGCtrunc m {\operad C}_{I,J}$ spanned by the decorated graphs with at most $\dim H_i - \card I - \card J$ internal flags to the differential graded $(\Symm I \times \Symm J)$-submodule $C'_*$ of the right hand side of \eqref{eq:unstable_iso_trunc} spanned by the elementary tensors involving at most $\dim H_i$ elements of $H_i$ (including $V_i$ and $W_i$).
  
  We will now prove that $C'_p$ is equal to degree $p$ of the right hand side of \eqref{eq:unstable_iso_trunc} when $p \le \alpha \dim H_i$.
  To this end, we take elements
  \[
  \begin{array}{ll}
    y \defeq  x \tensor v \tensor w  &\in  \CEchains * \big( \trunc {\Schur {(\shift[-m] \operad C)} {H_i}} \big) \tensor V_i^{\tensor I} \tensor W_i^{\tensor J} \\
    x \defeq  \Wedge_{v \in N} \shift \big( \shift[-m] \xi_v \tensor \eTensor_{j = 1}^{k_v} h_{v,j} \big)  &\in  \CEchains * \big( \trunc {\Schur {(\shift[-m] \operad C)} {H_i}} \big)
  \end{array}
  \]
  and note that $\deg x = \sum_{v \in N} \big( 1 - m + \deg{\xi_v} + \weight(v) \big)$ where $\weight(v) \defeq \sum_{j = 1}^{k_v} \deg{h_{v,j}}$.
  Due to the truncation we have that
  \[ \weight(v) \begin{cases} = 2 (m - n), & \text{if } k_v = 2 \\ \ge k_v n, & \text{if } k_v \ge 3 \end{cases} \]
  and that $k_v \ge 2$.
  Setting $N_2 \defeq \set{v \in N \mid k_v = 2}$ and similarly for $N_{\ge 3}$, we thus obtain
  \[ \deg x  \ge  \card{N_2} (1 - m + 2 (m-n)) + \sum_{v \in N_{\ge 3}} (1 - m + k_v n) \]
  since $\deg{\xi_v} \ge 0$ by assumption.
  We note $1 - m + 2(m - n) \ge 2$ and, when $k_v \ge 3$, that
  \[ \frac{3}{1 - m + 3n} (1 - m + k_v n) \ge k_v \]
  since $\frac{3n}{1 - m + 3n} \ge 1$ as $-3n < 1 - m < 0$.
  Setting
  \[ \beta  \defeq  {\max} \Big( 1, \frac{3}{1 - m + 3n} \Big) = \inv \alpha \]
  we thus obtain
  \[ \beta \deg x  \ge  \card {N_2} 2 + \sum_{v \in N_{\ge 3}} k_v  =  \sum_{v \in N} k_v \]
  and hence that $\beta \deg x + \card I + \card J$ is larger or equal to the number of elements of $H_i$ involved in $y$.
  Furthermore
  \[ \beta \deg x + \card I + \card J  =  \beta \deg y + (1 - \beta n) \card I + (1 - \beta (m-n)) \card J  \le  \beta \card y \]
  since $\deg v = n \card I$ and $\deg w = (m - n) \card J$.
  Hence $\beta \deg y = \inv \alpha \deg y \le \dim H_i$ implies $\eqcl y \in C'_*$.
  
  Using the same argument we can show that $D'_p = \DGCtrunc m {\operad C}_p$ for $p \le \alpha \dim H_i$.
  (A combinatorial argument using graphs would produce a better bound here; however, we will not need this.)
\end{proof}

\section{The undirected graph complex} \label{sec:undirected_gc}

Throughout this section, we let $m \in \NN$ be a natural number and $\operad C$ a cyclic operad concentrated in non-negative even degrees such that $\cyc{\operad C}{2} = \gen \basefield \id$ with the trivial $\Symm{2}$-action.
(As in the previous section, the condition that $\operad C$ is concentrated in even degrees is purely for the convenience of avoiding more signs.)

Our goal is to obtain a simpler description of the homology of the truncated directed graph complex $\DGCtrunc m {\operad C}$ of \cref{def:trunc_dgc}.
In fact, we will obtain a description in terms of \emph{undirected} graphs.
The rough idea is as follows.
Restricting the differential of $\DGCtrunc m {\operad C}$ to the two edges incident to a vertex of valence two (which must be labeled by the identity operation by our assumption on $\operad C$) yields the following picture.
\begin{center}
  \begin{tikzpicture} [scale = 1.5]
  \tikzstyle{vertex}=[circle, draw, inner sep = 0.05cm]
  \tikzstyle{bigvertex}=[ellipse, draw, inner sep = 0.05cm, minimum width = 1cm]
  
  \node[vertex] (l1) at (-1,0) {$\xi_1$};
  \node[vertex] (m1) at (0,0) {$\id$};
  \node[vertex] (r1) at (1,0) {$\xi_2$};
  
  \draw (l1.north west) -- ($2*(l1.north west) - (l1.center)$);
  \draw (l1.west) -- ($2*(l1.west) - (l1.center)$);
  \draw (l1.south west) -- ($2*(l1.south west) - (l1.center)$);
  \draw (r1.north east) -- ($2*(r1.north east) - (r1.center)$);
  \draw (r1.east) -- ($2*(r1.east) - (r1.center)$);
  \draw (r1.south east) -- ($2*(r1.south east) - (r1.center)$);
  
  \node at (2,0) {$\longmapsto$};
  
  \node[vertex] (l2) at (3,0) {$\xi_1$};
  \node[bigvertex] (r2) at (4,0) {$\xi_2$};
  
  \draw (l2.north west) -- ($2*(l2.north west) - (l2.center)$);
  \draw (l2.west) -- ($2*(l2.west) - (l2.center)$);
  \draw (l2.south west) -- ($2*(l2.south west) - (l2.center)$);
  \draw (r2.north east) -- ($1.75*(r2.north east) - 0.75*(r2.center)$);
  \draw (r2.east) -- ($1.5*(r2.east) - 0.5*(r2.center)$);
  \draw (r2.south east) -- ($1.75*(r2.south east) - 0.75*(r2.center)$);
  
  \node at (4.75,0) {$+$};
  
  \node[bigvertex] (l3) at (5.5,0) {$\xi_1$};
  \node[vertex] (r3) at (6.5,0) {$\xi_2$};
  
  \draw (l3.north west) -- ($1.75*(l3.north west) - 0.75*(l3.center)$);
  \draw (l3.west) -- ($1.5*(l3.west) - 0.5*(l3.center)$);
  \draw (l3.south west) -- ($1.75*(l3.south west) - 0.75*(l3.center)$);
  \draw (r3.north east) -- ($2*(r3.north east) - (r3.center)$);
  \draw (r3.east) -- ($2*(r3.east) - (r3.center)$);
  \draw (r3.south east) -- ($2*(r3.south east) - (r3.center)$);
  
  \begin{scope}[decoration = {markings, mark = at position 0.6 with {\arrow{>}}}]
  \draw[postaction={decorate}] (l1.east) -- (m1.west);
  \draw[postaction={decorate}] (r1.west) -- (m1.east);
  
  \draw[postaction={decorate}] (l2.east) -- (r2.west);
  
  \draw[postaction={decorate}] (r3.west) -- (l3.east);
  \end{scope}
  \end{tikzpicture}
\end{center}
Thus, intuitively, taking homology should kill the difference between the two orientations of an edge.
We formalize this using a spectral sequence argument: filtering $\DGCtrunc m {\Lie}$ by the number of vertices of valence two yields a spectral sequence, for which we prove that its first page is concentrated in a single row.
We moreover show that this row is isomorphic as a chain complex to a (well-known) undirected version of the graph complex, which we denote by $\UGC m {\operad C}$.
The spectral sequence then implies that there is an isomorphism $\Ho * (\DGCtrunc m {\Lie}) \iso \Ho * (\UGC m {\operad C})$.
(This argument is similar to a proof sketched by Willwacher \cite[Appendix~K]{Wil}.)

\begin{remark} \label{rem:ugc_coeff}
  There should also be a version of everything we do in this section in the more general situation of the truncated directed $(I,J)$-graph complex $\DGCtrunc m {\operad C}_{I,J}$.
  We will not elaborate on this here, though.
  It might appear in future work.
\end{remark}

We begin by noting that the degree $k$ part of the graded vector space $\DGCtrunc m {\operad C}$ splits as a direct sum
\[ \DGCtrunc m {\operad C}_k  =  \Dirsum_{l \in \NN} \DGCtrunc m {\operad C}_{k,l} \]
where $\DGCtrunc m {\operad C}_{k,l}$ is the subspace generated by elements represented by truncated decorated graphs with exactly $l$ vertices of valence two.
Note that a truncated decorated graph with $N$ vertices and $E$ edges represents an element of homological degree $k \ge N (1 - m) + E m$ (since $\operad C$ is concentrated in non-negative degrees).
Since all of its vertices are at least bivalent we have $E \ge N$ and hence $k \ge N$ (since $m \ge 0$).
Thus, in the above decomposition, a summand is trivial if $l > k$ and hence the sum is finite.

The differential restricts to a map
\[ d  \colon  \DGCtrunc m {\operad C}_{k,l}  \xlongto{(d^1, d^2)}  \DGCtrunc m {\operad C}_{k-1,l} \dirsum \DGCtrunc m {\operad C}_{k-1,l-1} \]
so that regrading via $C_{p,q} \defeq \DGCtrunc m {\operad C}_{p+q,q}$ yields a double complex $C_{*,*}$ with a differential $d^1$ of bidegree $(-1,0)$ and a differential $d^2$ of bidegree $(0, -1)$; that $d^1 \circ d^2 = - d^2 \circ d^1$ follows immediately from $d \circ d = 0$.
The total complex of $C_{*,*}$ is canonically isomorphic to $\DGCtrunc m {\operad C}$.
Moreover $C_{p,q}$ can only be non-trivial if $0 \le q \le p + q$.
Hence we obtain a convergent spectral sequence of the following form
\begin{equation} \label{ss:DGCtrunc}
  E^1_{p,q} = \Ho{q} \big( C_{p,*}, d^2 \big)  \abuts  \Ho{p+q} \big( \DGCtrunc m {\operad C} \big)
\end{equation}
(see e.g.\ \cite[§5.6]{Wei}).

We will use this spectral sequence to identify the homology of $\DGCtrunc m {\operad C}$.
However we will need a few preliminaries before we can do so, beginning with an undirected version of \cref{def:dirgraph}.

\begin{definition}
  We denote by $\Graph$ the following groupoid:
  \begin{itemize}
    \item Objects are tuples $(F, N, \mu, a)$ with $F$ and $N$ finite linearly ordered sets, $a \colon F \to N$ a map of sets, and $\mu$ a matching of $F$ (i.e.\ $\mu$ is a fixed-point-free bijection $F \to F$ of order two).
    \item A morphism $(F, N, \mu, a) \to (F', N', \mu', a')$ is a pair of bijections $(f \colon F \to F', g \colon N \to N')$ such that $f \after \mu = \mu' \after f$ and $g \after a = a' \after f$.
  \end{itemize}
  Moreover we write $\Graph[\ge 3]$ for the full subgroupoid of $\Graph$ spanned by those objects $(F, N, \mu, a)$ such that $\card {\inv a(v)} \ge 3$ for all $v \in N$.
\end{definition}

\begin{remark}
  Analogously to \cref{rem:DirGraph_intuition}, we can think of $\Graph$ as the groupoid of \emph{undirected graphs} and of $\Graph[\ge 3]$ as the groupoid of undirected graphs such that each vertex has at least valence $3$.
  
  Given $G = (F, N, \mu, a) \in \Graph$, the set $F$ specifies the set of flags, $N$ is the set of vertices (or nodes), the map $a$ specifies to which vertex a flag is incident, and the matching $\mu$ specifies how the flags are connected to form edges.
  In particular an \emph{edge} of $G$ is an orbit of $\mu$, i.e.\ a subset $\set {f, f'} \subseteq F$ such that $\mu(f) = f'$ and $\mu(f') = f$.
\end{remark}

\begin{definition}
  For $G = (F, N, \mu, a) \in \Graph$, we write $\Edge(G)$ for the set of edges of $G$.
  We equip it with the linear order pulled back along the injection $\Edge(G) \to F$ given by $\set {f, f'} \mapsto \min(f, f')$.
  This construction yields a functor $\Edge \colon \Graph \to \oBij$.
\end{definition}

\begin{definition}
  Let $D = (F, N, S, T, \mu, a)$ be a directed graph.
  Its \emph{underlying undirected graph} is the undirected graph $(F, N, \mu', a)$, where $\mu' \colon F \to F$ is the matching given by $\mu'(s) = \mu(s)$ for $s \in S \subseteq F$ and $\mu'(t) = \inv\mu(t)$ for $t \in T \subseteq F$.
\end{definition}

\begin{definition}
  For a truncated directed graph $D$ we define its \emph{underlying graph} $\Underlying (D) \in \Graph[\ge 3]$ to be the underlying undirected graph of $D$, except that all vertices of valence two together with their two incident edges are replaced by a single edge.
\end{definition}

\begin{remark} \label{rem:underlying_unprecise}
  To be entirely precise we would need to specify a linear order on the set of vertices and the set of flags of $\Underlying (D)$.
  This choice won't matter, however, and so we can make it arbitrarily.
\end{remark}

We will now study (the homology of) the chain complexes $C^2_{p,*} \defeq (C_{p,*}, d^2)$ occurring in the spectral sequence \eqref{ss:DGCtrunc}.
To this end, let $D$ be a truncated directed graph.
Note that applying $d^2$ to an element represented by $D$ yields a sum of elements each of which is represented by a directed graph $D'$ such that $\Underlying(D') \iso \Underlying(D)$.
Hence there is a splitting of chain complexes
\[ C^2_{p,*}  \iso  \Dirsum_{\eqcl G} C^G_{p,*} \]
where $\eqcl G$ runs over all isomorphism classes of $\Graph[\ge 3]$ and $C^G_{p,*}$ is the subcomplex of $C^2_{p,*}$ generated by the elements represented by directed graphs $D$ with $\Underlying(D) \iso G$.
Thus we can restrict our attention to the chain complexes $C^G_{p,*}$.

To understand (the homology of) $C^G_{p,*}$ we will construct a functor $\widetilde C_{\bullet,*}$ from $\Graph[\ge 3]$ to chain complexes in graded vector spaces (here the first grading $\bullet$ is the one of the graded vector space, and the second grading $*$ is the one of the chain complex) such that
\begin{equation} \label{eq:covering_condition}
\coinv{\big( \widetilde C_{\bullet,*}(G) \big)}{\Aut(G)}  \iso  C^G_{\bullet,*}  \defeq  \Dirsum_{p} C^G_{p,*}
\end{equation}
for all $G$.
Since $\Aut(G)$ is finite and, in characteristic $0$, quotienting by a finite group commutes with taking homology, we will then be able to deduce the homology of $C^G_{\bullet,*}$ from the homology of $\widetilde C_{\bullet,*}(G)$ together with its $\Aut(G)$-action.

More precisely, we will take a ``coordinate-free'' approach to the isomorphism \eqref{eq:covering_condition} and actually construct an isomorphism
\[ \colim{\Graph[\ge 3]} \widetilde C_{\bullet,*} \iso C^2_{\bullet,*} \defeq \big( {\Dirsum_p C_{p,*}}, d^2 \big) \]
of chain complexes in graded vector spaces.
This will be the more convenient approach as it doesn't require us to chose representatives of isomorphism classes of $\Graph[\ge 3]$.
We need a few preliminaries before we can construct $\widetilde C_{\bullet,*}$.

\begin{definition}
  Let $S$ be a set.
  We write $\dir(S)$ for the set of functions $S \to \set{-, 0, +}$.
  For an element $o \in \dir(S)$, we set its \emph{degree} to be $\deg o \defeq \card{\inv o(0)}$.
  
  Moreover, for $\? \in \set{+, -}$, $o \in \dir(S)$, and $s \in S$ such that $o(s) = 0$, we write $o_s^\? \in \dir(S)$ for the function with $o_s^\?(s) = \?$ and $o_s^\?(s') = o(s')$ for all $s' \in S \setminus \set s$.
\end{definition}

\begin{definition}
  When $G$ is an undirected graph, we write $\dir(G) \defeq \dir(\Edge(G))$.
  This forms a (covariant) functor $\dir$ from $\Graph$ to the category of sets by letting an isomorphism $\chi \colon G \to G'$ act on $o \in \dir(G)$ such that
  \[ (\chi \act o)(e)  \defeq  \begin{cases} o(\inv\chi(e)), & \text{if } \inv\chi(f_1) < \inv\chi(f_2) \\ -o(\inv\chi(e)), & \text{if } \inv\chi(f_1) > \inv\chi(f_2) \end{cases} \]
  for each edge $e = \set{f_1 < f_2}$ of $G'$.
  (Here we set $-- = +$, $-0 = 0$, and $-+ = -$, as expected.)
\end{definition}

\begin{remark}
  We think of the elements of $\dir(G)$ as all possible ways to, for each edge of $G$, either chose an orientation or leave it undirected.
\end{remark}

\begin{definition}
  Let $e = \set{f < f'}$ be an edge of an undirected graph $G$.
  We set the vector space $\Or(e)$ of orientations of $e$ to be $\basefield$ (in degree $0$).
  An isomorphism $\chi \colon G \to G'$ of undirected graphs induces a map $\Or(e) \to \Or(\chi(e))$ by setting it to be $\id[\basefield]$ if $\chi(f) < \chi(f')$ and $- {\id[\basefield]}$ otherwise.
\end{definition}

\begin{definition} \label{def:Direct}
  Let $G \defeq (F, N, \mu, a) \in \Graph[\ge 3]$ and $o \in \dir(G)$.
  We write $\Direct(G, o)$ for the truncated directed graph obtained from $G$ by, for each edge $e = \set{f < f'}$, doing the following
  \begin{itemize}
    \item if $o(e) = +$, we orient it from $f$ to $f'$ (i.e.\ $f \in S$ and $f' \in T$).
    \item if $o(e) = -$, we orient it from $f'$ to $f$ (i.e.\ $f' \in S$ and $f \in T$).
    \item if $o(e) = 0$, we replace $e$ by a new vertex $v_e$ with two incoming edges $(s_e^+, t_e^+)$ and $(s_e^-, t_e^-)$ such that the flag $s_e^+$ is incident to $a(f')$, the flag $s_e^-$ is incident to $a(f)$, and the flags $t_e^+$ and $t_e^-$ are incident to $v_e$.
  \end{itemize}
  The linear orders on the set of vertices and the set of flags of $\Direct(G, o)$ are inherited from $G$ such that, for $e = \set{f < f'}$ with $o(e) = 0$, the new vertex $v_e$ is inserted as the successor of $a(f)$ (if there are multiple such vertices, they are inserted according to the order of the edges $e$), the two new flags $s_e^+$ and $t_e^+$ are successively inserted in this order instead of $f'$, and similarly for $s_e^-$, $t_e^-$, and $f$.
  
  This construction yields an equivalence of categories $\Direct$ from the Grothendieck construction $\gc[{\Graph[\ge 3]}] \dir$ to $\DirGraphtrunc$.
\end{definition}

\begin{remark}
  Note that $\Underlying(\Direct(G, o))$ is canonically isomorphic to $G$ for all $o$.
\end{remark}

\begin{definition}
  We say that a collection of decorations $\xi_v$ of the vertices $v$ of $G$ \emph{represents} the element of $\Dec{m}{\operad C}(\Direct(G, o))$ that is represented by the $\xi_v$ and $\id \in \cyc {\operad C} 2$ at all of the added vertices of valence two.
\end{definition}

We are now ready to construct the functor $\widetilde C_{\bullet,*}$ of \eqref{eq:covering_condition}.
As a functor to bigraded vector spaces we set
\begin{equation} \label{eq:def_covering_complex}
  \widetilde C(G)  \defeq  \Dirsum_{o \in \dir(G)} \Dec{m}{\operad C}(\Direct(G, o))
\end{equation}
bigraded by $(k - \deg o, \deg o)$, where $k$ is the homological degree.
An isomorphism $\chi \colon G \to G'$ acts on this in the canonical way, i.e.\ it maps $\Dec{m}{\operad C}(\Direct(G, o))$ to $\Dec{m}{\operad C}(\Direct(G, \chi \act o))$ via the map induced by $\Direct(\chi, o)$.
This is precisely the following left Kan extension
\[
\begin{tikzcd}
  \gc[{\Graph[\ge 3]}] \dir \dar[swap]{\pr} \rar{\Direct}[swap]{\eq} & \DirGraphtrunc \dar{\Dec{m}{\operad C}} \\
  \Graph[\ge 3] \rar[dashed]{\widetilde C} & \BiGrVect
\end{tikzcd}
\]
of $\Dec{m}{\operad C} \circ \Direct$ along the projection $\pr$.

Since $\Direct$ is an equivalence of categories and since, by definition of $C^2$, there is an isomorphism of bigraded vector spaces $\colim{\DirGraphtrunc} \Dec{m}{\operad C} \iso C^2$, we obtain an isomorphism $\colim{\Graph[\ge 3]} \widetilde C \iso C^2$ of bigraded vector spaces.
(This uses that the colimit of a left Kan extension is isomorphic to the colimit of the original functor.)

To construct the differential of $\widetilde C(G)$ we will use the technical lemma following the next definition.

\begin{definition}
  Our standing assumption that $\cyc {\operad C} 2 = \gen \basefield \id$ implies that the vector spaces $\Dec{m}{\operad C}(\Direct(G, o))$ are independent of $o$ up to isomorphism (when $o(e) = 0$, the extra vertex $v_e$ is labeled by an element of $\cyc {\operad C} 2 \iso \QQ$).
  We denote by
  \[ c_{G,e}  \colon  \Dec{m}{\operad C}(\Direct(G, o_e^+))  \xlongto{\iso}  \Dec{m}{\operad C}(\Direct(G, o_e^-)) \]
  the canonical such isomorphism (by a slight abuse, we do not include $o$ in the notation).
  It is induced by the canonical bijection $\sigma$ between the set of initial flags of $\Direct(G, o_e^+)$ and the set of initial flags of $\Direct(G, o_e^-)$, i.e.\ it is given by $\id \tensor (\shift[m] \basefield)^{\tensor \sigma}$.
\end{definition}

\begin{lemma} \label{lemma:props_lifted_partial_diffs}
  There exist, for all $G \in \Graph[\ge 3]$, $e \in \Edge(G)$, and $o \in \dir(G)$ such that $o(e) = 0$, two isomorphisms
  \begin{align*}
    d_{G,e}^+ &\colon \Dec{m}{\operad C}(\Direct(G, o)) \longto \Dec{m}{\operad C}(\Direct(G, o_e^+)) \\
    d_{G,e}^- &\colon \Dec{m}{\operad C}(\Direct(G, o)) \longto \Dec{m}{\operad C}(\Direct(G, o_e^-))
  \end{align*}
  such that the following properties hold for all elements $\? \in \set{\pm}$ and $x \in \Dec{m}{\operad C}(\Direct(G, o))$:
  \begin{enumerate}
    \item Let $\chi \colon G \to G'$ be an isomorphism of $\Graph[\ge 3]$ and set $\?' \defeq (\chi \act o_e^\?)(\chi(e))$.
    Then $\chi \act d_{G,e}^\?(x) = d_{G',\chi(e)}^{\?'}(\chi \act x)$.
    \item Let $e' \neq e$ be another edge of $G$ such that $o(e') = 0$, and let $\?' \in \set{\pm}$.
    Then $d_{G,e'}^{\?'} (d_{G,e}^\? (x))= - d_{G,e}^\? (d_{G,e'}^{\?'} (x))$.
    \item We have $r (d_{G,e}^\? (x)) = d^{\Direct(G,o)}_{s_e^\?} (r (x))$, where $r$ denotes the canonical map from $\Dec{m}{\operad C}(\Direct(G, o))$ respectively $\Dec{m}{\operad C}(\Direct(G, o_e^\?))$ to $C^2$.
    \item We have that $d_{G,e}^- \circ \inv{(d_{G,e}^+)} \colon \Dec{m}{\operad C}(\Direct(G, o_e^+)) \to \Dec{m}{\operad C}(\Direct(G, o_e^-))$ is equal to $(-1)^m c_{G,e}$.
  \end{enumerate}
\end{lemma}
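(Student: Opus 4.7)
The plan is to define the maps $d^{\pm}_{G,e}$ as explicit ``lifts'' of the edge-contraction operations $d^{\Direct(G,o)}_{s_e^{\?}}$ appearing in the differential of $\DGC m {\operad C}$, exploiting that when $o(e) = 0$ the directed graph $\Direct(G, o)$ has an extra valence-two vertex $v_e$ labeled (necessarily) by $\id \in \cyc{\operad C}{2}$. Contracting the flag $s_e^{\?}$ fuses $v_e$ with the adjacent vertex $a(f^{\?})$ (where $f^+ = f'$, $f^- = f$ for $e = \set{f < f'}$), and since composition with $\id$ is the identity of the cyclic operad, this yields exactly $\Direct(G, o_e^{\?})$ with the same vertex decorations as in $\Direct(G, o)$. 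On the level of the vector spaces $\Dec{m}{\operad C}(\Direct(G, o)) = (\shift\basefield)^{\tensor N} \tensor (\shift[-m] \operad C)(a) \tensor (\shift[m] \basefield)^{\tensor S}$, this identification is canonical up to a sign incurred by reordering vertices and source flags, which I will fix so as to force property~(3) to hold.

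More precisely, I would first reduce to the ``canonical position'' used in the third bullet of \cref{def:dgc}: apply an automorphism of $\Direct(G, o)$ that places $a(f^{\?})$ and $v_e$ in the first two vertex positions (in this order), sends $s_e^{\?}$ to the last flag of $\inv a(a(f^{\?}))$ and its partner $t_e^{\?}$ to the first flag of $\inv a(v_e)$, and makes the attachment map order-preserving there. In this position the formula for $d^{\Direct(G,o)}_{s_e^{\?}}$ becomes a specific $\epsilon_s = (-1)^{m(\outdeg{a(f^{\?})} - 1)}$ times the element representing $\contract{s_e^{\?}}(\Direct(G, o)) = \Direct(G, o_e^{\?})$ with the vertex labels unchanged. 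I then define $d^{\?}_{G,e}$ to be the map that, after being pulled back to the original position by the reverse automorphism, sends an arbitrary $x$ to $\epsilon_s$ times the ``same'' element in $\Dec{m}{\operad C}(\Direct(G, o_e^{\?}))$ obtained by deleting the extra source/target pair and vertex, keeping the remaining flags and source indices in their canonical order. This definition does not depend on the chosen automorphism, and property~(3) holds by construction.

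Property~(1) is then automatic: both sides of the equation in~(1) are defined by the same combinatorial recipe applied through the isomorphism $\chi$. Property~(2) reduces to the observation that contractions at two distinct edges $e \ne e'$ act on disjoint parts of the graph and each carries a single $\shift$ degree, so they anticommute by the Koszul sign rule; alternatively, one can derive~(2) from $d \after d = 0$ in $\DGC m {\operad C}$ applied to an element represented by a graph with only $v_e$ and $v_{e'}$ as its valence-two vertices of this type, noting that the only terms of $d^2$ that survive are precisely the pairwise compositions $d^{\?'}_{G,e'} \after d^{\?}_{G,e}$ and $d^{\?}_{G,e} \after d^{\?'}_{G,e'}$.

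The main sign calculation, which I expect to be the bulk of the work, is property~(4). Here I would compute $d^-_{G,e} \after \inv{(d^+_{G,e})}$ on a representative $x \in \Dec{m}{\operad C}(\Direct(G, o_e^+))$ by tracking both canonical-position reorderings. The underlying label data is unchanged (both maps factor through the same $\Dec{m}{\operad C}(\Direct(G, o))$ with $v_e$ labeled by $\id$), so only signs differ; these arise from (a) the different positions of $s_e^+$ versus $s_e^-$ within $S$, and (b) swapping the roles of $a(f)$ and $a(f')$ in the canonical ordering used by $d^{\?}$. A careful bookkeeping using the linear order on $\Direct(G, o)$ prescribed in \cref{def:Direct} (where $s_e^+, t_e^+$ replace $f'$ and $s_e^-, t_e^-$ replace $f$, and $v_e$ is inserted right after $a(f)$) will yield exactly one ``extra'' transposition of a degree-$m$ source flag, producing the sign $(-1)^m$ and matching the definition of $c_{G,e}$, which is by construction the unique isomorphism $\Dec{m}{\operad C}(\Direct(G, o_e^+)) \iso \Dec{m}{\operad C}(\Direct(G, o_e^-))$ induced by the canonical bijection on source flags.
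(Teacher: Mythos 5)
Your overall strategy matches the paper's: define $d^{\?}_{G,e}$ in a normalized/canonical position (with a sign chosen exactly so that property~(3) holds), then extend to general $(G,e)$ by transport along isomorphisms, and verify the remaining properties by sign bookkeeping. So properties~(1), (3), and (4) are handled in essentially the same way as the paper, modulo the level of detail. One point the paper makes explicit and you only assert is well-definedness: after defining $d^{\?}_{G,e}$ by moving to canonical position, one must check that the definition is unchanged if one moves via a different isomorphism (equivalently, that the formula is invariant under automorphisms of the normalized pair fixing the distinguished edge). This is a genuine check, not automatic, because the sign $\epsilon_s$ and the canonical relabeling both change under such an automorphism; the paper handles it by comparing the signs induced by $\Direct(\chi, o)$ and $\Direct(\chi, o_{e_0}^{\?})$.

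The more serious concern is your treatment of property~(2). Your first argument (``contractions at $e \ne e'$ act on disjoint parts, so they anticommute by Koszul signs'') is not adequate: the two edges of $G$ may be incident to the same vertices, and even when they are not, $\Dec{m}{\operad C}$ is a tensor product ordered by a global linear order, so the two partial differentials do not split as a tensor product of operators in any obvious way; the signs genuinely need to be computed. Your second argument (deduce~(2) from $d \circ d = 0$ in $\DGC m {\operad C}$) has a gap as stated: property~(2) is an identity of linear maps $\Dec{m}{\operad C}(\Direct(G,o)) \to \Dec{m}{\operad C}(\Direct(G, o_{e\,e'}^{\?\,\?'}))$, whereas $d \circ d = 0$ combined with property~(3) only tells you that the two composites have equal image under the canonical map $r$ to the colimit $C^2$, and $r$ is a quotient by the action of $\Aut$ and is in general not injective. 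Since both composites are scalar multiples of the same canonical relabeling isomorphism, this would suffice if you could exhibit a single element on which $r$ is nonzero, but this is not automatic when the graph has non-trivial automorphisms (the colimit may kill the element). The paper sidesteps this by checking~(2) directly via a case analysis on how the vertices incident to $e$ and $e'$ can coincide, which is more work but avoids the injectivity issue.
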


Assuming this lemma, we can define a differential on $\widetilde C(G)$ of bidegree $(0, -1)$ by
\begin{equation} \label{eq:ugc_ss_diff}
  d \defeq \sum_{e \in \Edge(G)} (d_{G,e}^+ + d_{G,e}^-)
\end{equation}
(here $d_{G,e}^\?$ is understood to be zero on $\Dec{m}{\operad C}(\Direct(G, o))$ when $o(e) \neq 0$).
By the second property above we have $d \circ d = 0$ and hence that $\widetilde C_{\bullet,*}(G) \defeq (\widetilde C(G), d)$ is a chain complex.
Moreover the first property implies functoriality of $\widetilde C_{\bullet,*}$, and the third property implies that the isomorphism $\colim{\Graph[\ge 3]} \widetilde C \iso C^2$ is compatible with the differentials.

\begin{proof}[Proof of \Cref{lemma:props_lifted_partial_diffs}]
  It will be convenient to construct the isomorphisms $d_{G,e}^+$ and $d_{G,e}^-$ simultaneously for all elements $(G, e)$ of some fixed isomorphism class of the category of undirected graphs with a specified edge.
  To this end we fix some pair $(G_0 = (F_0, N_0, \mu_0, a_0), e_0)$ in this isomorphism class such that the one or two vertices incident to $e_0$ are the first elements of $N_0$, such that $a_0$ is order preserving, and such that the two flags making up $e_0$ come first in the linear orders of their respective vertices (if they are incident to the same vertex, we ask these two flags to be the first two elements).
  
  Now let $o \in \dir(G_0)$ such that $o(e_0) = 0$ and let $x \in \Dec{m}{\operad C}(\Direct(G_0, o))$ be the element represented by some decorations $(\xi_v)_{v \in N_0}$.
  Then we set $d_{G_0,e_0}^\?(x)$ to be $(-1)^{\card N m + n + b_\?}$ times the element of $\Dec{m}{\operad C}(\Direct(G_0, o_{e_0}^\?))$ represented by the $\xi_v$.
  Here
  \begin{align*}
    b_- &\defeq 0 \\
    b_+ &\defeq \begin{cases} m, & \text{if $e_0$ is a loop} \\ m \outdeg{v_{1,o}}, & \text{otherwise} \end{cases}
  \end{align*}
  where $v_{1,o}$ is the first element of $N_0$ considered as a vertex of $\Direct(G_0, o)$.
  
  We extend this to all pairs $(G, e)$ by choosing some isomorphism $\chi \colon G \to G_0$ such that $\chi(e) = e_0$, and setting for $o \in \dir(G)$, $x \in \Dec{m}{\operad C}(\Direct(G, o))$, and $\? \in \set{\pm}$,
  \begin{equation} \label{eq:def_lifted_partial_diffs}
    d_{G,e}^\?(x) \defeq \inv{\Direct(\chi, o_e^\?)} \act d_{G_0,e_0}^{\?'}(\Direct(\chi, o) \act x)
  \end{equation}
  where $\?' \defeq (\chi \act o_e^\?)(e_0)$.
  To see that this is well-defined we need to show that equation \eqref{eq:def_lifted_partial_diffs} holds when $(G, e) = (G_0, e_0)$ and $\chi$ is any automorphism of $G_0$ such that $\chi(e_0) = e_0$.
  This follows directly from comparing the signs induced by $\Direct(\chi, o)$ and $\Direct(\chi, o_{e_0}^\?)$ (distinguishing the cases of $e_0$ being a loop or not and $\chi$ flipping the two flags making up $e_0$ or not).
  The first property in the statement of the lemma is thus true by definition.
  
  To prove the third property, it is enough, by equation \eqref{eq:def_lifted_partial_diffs} and the fact that $r(\chi \act x) = r(x)$, to do so for the pair $(G_0, e_0)$.
  In this case, the equality of the signs involved can be deduced easily from \Cref{def:dgc,def:Direct}.
  
  The second property follows from similar arguments: by equation \eqref{eq:def_lifted_partial_diffs} it is enough to check this for one member of each of the isomorphism classes of the category of undirected graphs with two distinct specified edges.
  Distinguishing various cases, corresponding to the different ways the vertices incident to the two edges can coincide or not, we choose representatives of the isomorphism classes that behave nicely with our construction above.
  In these cases it is then straightforward (but tedious) to check the equality directly from the definition.
  
  To show the fourth property, we first note that, for all isomorphisms $\chi \colon G \to G'$, the following diagram of isomorphisms commutes
  \[
  \begin{tikzcd}[column sep = 40]
    \Dec{m}{\operad C}(\Direct(G, o_e^+)) \rar{c_{G,e}} \dar[swap]{\chi} & \Dec{m}{\operad C}(\Direct(G, o_e^-)) \dar{\chi} \\
    \Dec{m}{\operad C}(\Direct(G', o_{\chi(e)}^\?)) \rar[leftrightarrow]{c_{G',\chi(e)}} & \Dec{m}{\operad C}(\Direct(G', o_{\chi(e)}^{-\?}))
  \end{tikzcd}
  \]
  where $\? \defeq (\chi \act o)(\chi(e))$.
  Hence it is enough to prove this property in the cases of the pairs $(G_0, e_0)$.
  There it is true by definition since $(-1)^{b_+ + b_- + m}$ is (regardless of whether $e_0$ is a loop or not) precisely the $m$-th power of the sign of the canonical bijection between the set of initial flags of $\Direct(G_0, o_{e_0}^+)$ and the set of initial flags of $\Direct(G_0, o_{e_0}^-)$.
\end{proof}

We now claim that, for any $G$, the homology of $\widetilde C_{\bullet,*}(G)$ is concentrated in bidegrees $(p, 0)$ (which implies that the spectral sequence \eqref{ss:DGCtrunc} collapses on the second page).
This follows immediately from the following more general lemma.

\begin{lemma} \label{lemma:cube_homology}
  Let $S$ be a finite set and $(V_o)_{o \in \dir(S)}$ a family of vector spaces.
  Furthermore let there be, for each $s \in S$ and $o \in \dir(S)$ such that $o(s) = 0$, two isomorphisms $d_s^+ \colon V_o \to V_{o_s^+}$ and $d_s^- \colon V_o \to V_{o_s^-}$.
  Assume that, for all $\?, \?' \in \set{\pm}$ and $s \neq s' \in S$ such that $o(s) = 0 = o(s')$, we have an equality $d_s^? \circ d_{s'}^{\?'} = - d_{s'}^{\?'} \circ d_s^\?$ of maps $V_o \to V_{o_{s\,s'}^{\?\,\?'}} = V_{o_{s'\,s}^{\?'\,\?}}$.
  
  Let $C \defeq \Dirsum_{o \in \dir(S)} V_o$ be graded by the degree of $o$, and equip it with the differential given by $d \defeq \sum_{s \in S} (d_s^+ + d_s^-)$ (here we set $d_s^?(V_o) = 0$ for all $o$ with $o(s) \neq 0$).
  Then $C_*$ is a chain complex with homology concentrated in degree $0$.
\end{lemma}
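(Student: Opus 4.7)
The plan is to proceed by induction on $|S|$. The base case $|S| = 0$ is immediate: then $\dir(S)$ has a single element, of degree $0$, and $d = 0$. For the inductive step, I fix some $s \in S$ and bigrade $C$ by setting $C_{p, q} \defeq \Dirsum_o V_o$, summed over those $o \in \dir(S)$ with $\deg o = p + q$ and such that $q = 1$ precisely when $o(s) = 0$. Setting $d^2 \defeq d_s^+ + d_s^-$ (of bidegree $(0, -1)$) and $d^1 \defeq \sum_{s' \neq s}(d_{s'}^+ + d_{s'}^-)$ (of bidegree $(-1, 0)$), the anticommutativity hypothesis ensures that $(C_{*, *}, d^1, d^2)$ is a double complex whose total complex is $C_*$.

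Next I compute the vertical homology $\Ho * (C_{p, *}, d^2)$. For each $o$ with $o(s) = 0$, the component $V_o \to V_{o_s^+} \dirsum V_{o_s^-}$ of $d^2$ is injective (as $d_s^+$ is an isomorphism), so $\Ho 1 (C_{p, *}, d^2) = 0$. Let $C' \defeq \Dirsum_{o:\, o(s) = -} V_o$. The linear map $C_{p, 0} \to C'_p$ that is the identity on each $V_o$ with $o(s) = -$ and is $-d_s^- \inv{(d_s^+)}$ on each $V_o$ with $o(s) = +$ is surjective with kernel exactly the image of $d^2$, and so induces an isomorphism $\Ho 0 (C_{p, *}, d^2) \iso C'_p$.

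A brief sign check, exploiting that two applications of the anticommutativity cancel, shows that $d_s^- \inv{(d_s^+)}$ commutes with $d_{s'}^+$ and $d_{s'}^-$ for every $s' \neq s$; hence, under the identification above, the horizontal differential induced on $\Ho 0 (C_{\bullet, *}, d^2)$ agrees with the restriction of $d^1$ to $C'_*$. Applying \Cref{lemma:double_complex_ss_collapse}, I conclude that $C_*$ is quasi-isomorphic to $C'_*$. But $C'_*$ is of exactly the form appearing in the statement, now for the smaller set $S \setminus \set s$ (identifying $\dir(S \setminus \set s)$ with its image in $\dir(S)$ via extension by $s \mapsto -$), with the required maps and anticommutativity relations inherited from the hypothesis. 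The inductive hypothesis then yields that $\Ho * (C'_*)$, and so $\Ho * (C_*)$, is concentrated in degree $0$. The only non-routine point I foresee is the sign verification that $d_s^- \inv{(d_s^+)}$ commutes with the remaining $d_{s'}^{\pm}$, which is essential for the inductive step to reduce cleanly.
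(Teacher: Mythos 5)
Your proof is correct and follows the same strategy as the paper's: fix $s \in S$, bigrade $C$ so that the $q$-direction records whether $o(s) = 0$, use the resulting double complex with $d^2 = d_s^+ + d_s^-$, observe that the vertical homology is concentrated in bidegrees $(p, 0)$, identify the $E^1$-row with a complex of the same form for $S \setminus \set{s}$, and induct. The only substantive difference is how you realize the isomorphism between $\Ho_0(C_{p,*}, d^2)$ and a genuine subquotient of $C_{p,0}$. You project onto $C' = \Dirsum_{o(s)=-} V_o$, sending the $o(s)=+$ summands across via $-d_s^-\inv{(d_s^+)}$; this is a correct retraction of $C_{p,0}$ onto a complement of $\im d^2$, but it only intertwines $d^1$ once you have verified that $d_s^-\inv{(d_s^+)}$ commutes with each $d_{s'}^{\pm}$ for $s' \neq s$ (which your double-anticommutation argument does establish). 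The paper avoids this verification by instead using the inclusion $C^+ = \Dirsum_{o(s)=+} V_o \hookrightarrow C_{p,0}$: this is a $d^1$-subcomplex transversal to $\im d^2$, so the composite $C^+ \hookrightarrow C_{p,0} \twoheadrightarrow \Ho_0$ is a chain map for free, with no sign check needed. Both routes produce the same induction on $\card S$, so yours is a valid alternative, just marginally more work.
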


\begin{proof}
  First note that the anticommutativity of the partial differentials implies that $d \circ d = 0$, so we only need to show the claim about the homology.
  Now choose some $s \in S$.
  Then $C$ can be bigraded via
  \[
  C_{p,0}  \defeq  \Dirsum_{\substack{o \in \dir(S) \\ \card{\inv o(0)} = p \\ o(s) \neq 0}} V_o
  \qquad \text{and} \qquad
  C_{p,1}  \defeq  \Dirsum_{\substack{o \in \dir(S) \\ \card{\inv o(0)} = p + 1 \\ o(s) = 0}} V_o
  \]
  and $C_{p,q} \defeq 0$ for $q \not\in \set{0, 1}$.
  The two differentials $d^1 \defeq \sum_{s' \in S \setminus \set {s}} (d_{s'}^+ + d_{s'}^-)$ and $d^2 \defeq d_{s}^+ + d_{s}^-$ give this the structure of a first-quadrant double complex with total complex isomorphic to $C$.
  Hence there is a convergent spectral sequence
  \begin{equation} \label{eq:cube_ss}
    E^1_{p,q} = \Ho{q} (C_{p,*}, d^2)  \abuts  \Ho{p+q} (C_*)
  \end{equation}
  with the differential on the first page induced by $d^1$ (see e.g.\ \cite[§5.6]{Wei}).
  
  Note that $C_{p,*}$ splits as $\Dirsum_o C_{p,*}^o$ where $o$ runs over those elements of $\dir(S)$ of degree $p+1$ such that $o(s) = 0$, and $C_{p,*}^o$ is the subcomplex
  \[ V_o  \xlongto{(d_s^+, d_s^-)}  V_{o_{s}^+} \dirsum V_{o_{s}^-} \]
  of $C_{p,*}$.
  Since $d_s^+$ and $d_s^-$ are both isomorphisms, the homology of $C_{p,*}^o$ is concentrated in degree 0.
  Moreover the inclusion $V_{o_s^+} \to C_{p,*}^o$ induces an isomorphism on homology (where the former is considered as a chain complex concentrated in degree 0).
  Hence, setting
  \[ C^+  \defeq  \Dirsum_{\substack{o \in \dir(S) \\ o(s) = +}} V_o \]
  grading it by the degree of $o$ and equipping it with $d^1$ as a differential, we obtain an isomorphism of chain complexes $C^+_\bullet \to \big( \Ho{0}(C_{\bullet,*}, d^2), d^1 \big)$.
  Since the $E_1$-page of the spectral sequence \eqref{eq:cube_ss} is concentrated in the row $q = 0$, there thus is an isomorphism $\Ho{*}(C) \iso \Ho{*}(C^+)$.
  Hence it is enough to show that the homology of $C^+$ is concentrated in degree $0$.
  
  To see this, we note that $C^+$ is a chain complex as described in the statement of this lemma, but with base set $S \setminus \set s$ instead of $S$.
  Thus we can, by induction, reduce to the case where $S$ is empty.
  In that case the statement is trivially true, as any such chain complex is concentrated in degree $0$.
\end{proof}

Let us recap what we did so far.
We constructed the spectral sequence \eqref{ss:DGCtrunc}
\[ E^1_{p,q} = \Ho{q} \big( C^2_{p,*} \big)  \abuts  \Ho{p+q} \big( \DGCtrunc m {\operad C} \big) \]
and, in \eqref{eq:def_covering_complex} and \eqref{eq:ugc_ss_diff}, a functor $\widetilde C_{\bullet,*}$ from $\Graph[\ge 3]$ to chain complexes in graded vector spaces such that there is an isomorphism $\colim{} \widetilde C_{\bullet,*} \iso C^2_{\bullet,*}$.
Then we proved, in \cref{lemma:cube_homology}, that the pointwise homology $\Ho * \after \widetilde C_{\bullet,*}$ is concentrated in bidegrees $(p, 0)$.

Now note that $\Graph[\ge 3]$ is a disjoint union of groupoids equivalent to finite groups, and hence that taking rational homology commutes with $\Graph[\ge 3]$-colimits (since rational homology commutes with direct sums and quotients by finite groups).
Hence there are isomorphisms
\begin{equation} \label{eq:ss_first_page}
  \Ho * \big( C^2_{\bullet,*} \big)  \iso  \Ho * \big( \colim {\Graph[\ge 3]} \widetilde C_{\bullet,*} \big)  \iso  \colim {\Graph[\ge 3]} \big( \Ho * \after \widetilde C_{\bullet,*} \big)
\end{equation}
of bigraded vector spaces.
In particular the homology of $C^2_{\bullet,*}$ is also concentrated in bidegrees $(p, 0)$.
The spectral sequence then implies that the differential $d^1$ of the double complex $C_{*,*}$ induces a differential on $\Ho 0 (C^2_{\bullet,*})$ such that there is an isomorphism
\begin{equation} \label{eq:ss_collapse}
  \Ho \bullet \big( \DGCtrunc m {\operad C} \big)  \iso  \Ho \bullet \big( \Ho 0 (C^2_{\bullet,*}), d^1 \big) 
\end{equation}
of graded vector spaces (\cref{lemma:double_complex_ss_collapse} provides an explicit description of this isomorphism, which we will use later).
Pulling back the differential $d^1$ along the isomorphism \eqref{eq:ss_first_page} hence yields a differential on the graded vector space $\colim{} (\Ho{0} \circ \widetilde C_{\bullet,*})$ such that the homology is isomorphic to the homology of $\DGCtrunc m {\operad C}$.
This colimit, equipped with the pulled-back differential, is the undirected graph complex we are looking for.
To prove this, we will first give an explicit description of the functor $\Ho{0} \circ \widetilde C_{\bullet,*}$, and then also identify the differential in these terms.

\begin{definition}
  For an undirected graph $G \defeq (F, N, \mu, a) \in \Graph[\ge 3]$ we set
  \[ \UDec{m}{\operad C}(G)  \defeq  (\shift \basefield)^{\tensor N} \tensor (\shift[-m] \operad C)(a) \tensor \Tensor_{e \in \Edge(G)} \shift[m] \big( \Or(e)^{\tensor m+1} \big) \]
  and call it the \emph{(graded) vector space of $\operad C$-decorations on $G$}.
  This yields a functor $\UDec{m}{\operad C} \colon \Graph[\ge 3] \to \GrVect$.
  Moreover we set
  \[ \UGC m {\operad C}  \defeq  \colim{\Graph[\ge 3]} \UDec{m}{\operad C} \]
  and call it the \emph{(graded) vector space of $\operad C$-decorated undirected graphs}.
  
  Given a family $\big( \xi_v \in \operad C(\inv a(v)) \big)_{v \in N}$ of elements of $\operad C$, we will call
  \[ \shift[\card N] 1 \tensor \Tensor_{v \in N} \shift[-m] \xi_v \tensor \shift[m \card{\Edge(G)}] 1 \in \UDec m {\operad C} (G) \]
  the element \emph{represented} by ($G$ and) the decorations $\xi_v$.
  By a slight abuse of terminology we will use the same name for its image in $\UGC{m}{\operad C}$.
\end{definition}

\begin{lemma} \label{lemma:UDec_iso}
  The following composition
  \[ \UDec{m}{\operad C}(G)  \xlongto{\iso}  \Dec{m}{\operad C}(\Direct(G, \const[+]))  \xlongto{\iso}  \Ho{0}(\widetilde C_{\bullet,*}(G)) \]
  is a natural isomorphism $\UDec{m}{\operad C} \to \Ho{0} \after \widetilde C_{\bullet,*}$ of functors $\Graph[\ge 3] \to \GrVect$.
  Here the left hand map is given by sending the element represented by $G$ and some decorations $\xi_v$ to the element represented by $\Direct(G, \const[+])$ and the same decorations $\xi_v$.
  The right hand map is the composite of the canonical inclusion of the middle term into $\widetilde C_{\bullet,0}(G)$ followed by the projection onto the zeroth homology.
\end{lemma}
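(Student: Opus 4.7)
The plan is to verify that both maps in the composition are pointwise isomorphisms and then establish naturality in $G \in \Graph[\ge 3]$. For the right-hand map, I would invoke the proof of \cref{lemma:cube_homology} applied to $S \defeq \Edge(G)$ and $V_o \defeq \Dec{m}{\operad C}(\Direct(G, o))$: iterating the spectral sequence argument of that proof shows that the inclusion of the single summand $V_{\const[+]}$ into the total chain complex $\widetilde C_{\bullet,*}(G)$ is a quasi-isomorphism, and hence that the composition with the projection to $\Ho 0$ is an iso. The left-hand map is an iso of graded vector spaces for each fixed $G$ by direct inspection: both sides are a tensor product $(\shift \basefield)^{\tensor N} \tensor (\shift[-m] \operad C)(a) \tensor (\shift[m \card{\Edge(G)}] \basefield)$, and the edge factors are canonically identified by choosing the small-to-large generator of each $\Or(e)$, which on $\Dec{m}{\operad C}(\Direct(G, \const[+]))$ corresponds to the unique source flag of each edge.

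The main work is naturality. For an isomorphism $\chi \colon G \to G'$, the action $\chi_*$ on the middle term lands in $\Dec{m}{\operad C}(\Direct(G', \chi \act \const[+]))$ rather than in $\Dec{m}{\operad C}(\Direct(G', \const[+]))$: they agree except on those edges $e$ whose ordered flag pair is reversed by $\chi$. Inside $\Ho 0 (\widetilde C_{\bullet,*}(G'))$ one identifies these two summands via the zigzag $V_{\chi \act \const[+]} \xleftarrow{d^-_{\chi(e)}} V_{0\text{ at }\chi(e)} \xrightarrow{d^+_{\chi(e)}} V_{\const[+]}$ for each flipped edge. Combining the cycle relation in $\Ho 0$ (namely $d^+_{\chi(e)} y + d^-_{\chi(e)} y \equiv 0$) with the fourth property of \cref{lemma:props_lifted_partial_diffs}, the identification per flipped edge contributes a factor of $(-1)^{m+1} c_{G', \chi(e)}^{-1}$. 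On the $\UDec$ side, the $\chi$-action contributes precisely $(-1)^{m+1}$ per flipped edge from the $\Or(e)^{\tensor m+1}$ factor.

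Verifying that these two contributions agree on every elementary tensor is the heart of the argument, and its main obstacle. After splitting off the $(-1)^{m+1}$ signs, what remains is a Koszul permutation sign comparison: on the $V$ side, the composite of $\chi_*^V$ (permuting the source-flag factor from $S_G$ to $S_{G', \chi \act \const[+]}$) with the $c_{G', \chi(e)}^{-1}$'s (further permuting within the source-flag factor to reach $S_{G', \const[+]}$); on the $\UDec$ side, the permutation of the factors of $\bigotimes_{e \in \Edge(G)} \shift[m] \Or(e)^{\tensor m+1}$ induced by $\chi \colon \Edge(G) \to \Edge(G')$. I would check that these two bijections of ordered sets coincide, since in both cases they send an edge's minimum-flag representative to its image edge's minimum-flag representative (on a non-flipped edge $e = \set{f < f'}$ both send $f$ to $\chi(f)$; on a flipped edge both send $f$ to $\chi(f')$). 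With this in hand the Koszul signs agree, naturality follows, and the statement of the lemma is established.
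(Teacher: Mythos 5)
Your proposal is correct and takes essentially the same approach as the paper's proof: the zigzag relation per flipped edge combined with property (4) of \cref{lemma:props_lifted_partial_diffs} (yielding the $(-1)^{m+1}\inv{c_{G',\chi(e)}}$ factor), and the final identification of the two source-flag/edge bijections to match the Koszul signs, are exactly the paper's naturality argument. The only small deviation is that the paper establishes bijectivity of the right-hand map directly from the explicit quotient description of $\Ho{0}(\widetilde C_{\bullet,*}(G))$ together with property (4), rather than by iterating the spectral sequence argument of \cref{lemma:cube_homology}.
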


\begin{proof}
  That the left hand map is an isomorphism is clear.
  Hence it is enough to show that the right hand map is one as well, and that their composition is natural.
  We begin with the former.
  
  By definition we have
  \[ \Ho{0}(\widetilde C_*(G))  =  \quot {\big( \Dirsum_{\substack{o \in \dir(G) \\ \deg o = 0}} \Dec{m}{\operad C}(\Direct(G, o)) \big)} {\sim} \]
  where $\sim$ is generated by the relations $d_{G,e}^+(x) \sim - d_{G,e}^-(x)$ for each edge $e$ of $G$, element $o \in \dir(S)$ such that $\inv o(0) = \set e$, and $x \in \Dec{m}{\operad C}(\Direct(G, o))$.
  The relation $\sim$ can be equivalently described as being generated by $x \sim - d_{G,e}^- \big( \inv{(d_{G,e}^+)}(x) \big)$ for each edge $e$, element $o \in \dir(S)$ with $\deg o = 0$ and $o(e) = +$, and $x \in \Dec{m}{\operad C}(\Direct(G, o))$.
  
  By the last property of \Cref{lemma:props_lifted_partial_diffs}, we have an equality of isomorphisms $- d_{G,e}^- \after \inv{(d_{G,e}^+)} = (-1)^{m+1} c_{G,e}$.
  We also have, by definition, that $c_{G,e} \after c_{G,e'} = c_{G,e'} \after c_{G,e}$.
  This implies that each of the canonical maps $\Dec{m}{\operad C}(\Direct(G, o)) \to \Ho{0}(\widetilde C_*(G))$ is an isomorphism.
  
  We will now identify the functorial structure of $\Ho{0} \circ \widetilde C_{\bullet,*}$ in these terms, which will imply the desired naturality.
  Let $\chi \colon G \to G'$ be an isomorphism and let $x \in \Dec{m}{\operad C}(\Direct(G, \const[+]))$ be represented by some decorations $(\xi_v)_{v \in N}$ where $N$ is the set of vertices of $G$.
  Then $\chi \act x$ is $\epsilon \in \set{\pm 1}$ times the element $y$ of $\Dec{m}{\operad C}(\Direct(G', \chi \act \const[+]))$ represented by the decorations $(\chi \act \xi_{\inv \chi(v)})_{v \in N'}$ where $N'$ is the set of vertices of $G'$.
  This element $y$ is equivalent under $\sim$ to
  \[ (-1)^{k(m+1)} \big( \inv {c_{G',e_k}} \circ \dots \circ \inv{c_{G',e_1}} \big) (y)  \in  \Dec{m}{\operad C}(\Direct(G', \const[+])) \]
  where $\set{e_1, \dots, e_k} \defeq \set{e \in \Edge(G') \mid (\chi \act \const[+])(e) \neq +}$.
  We hence obtain that $\chi \act x$ is equivalent under $\sim$ to $\epsilon (-1)^{k(m+1)} \sgn(\sigma)^m$ times the element of $\Dec{m}{\operad C}(\Direct(G', \const[+]))$ represented by the decorations $(\chi \act \xi_{\inv \chi(v)})_{v \in N'}$, where $\sigma$ is the canonical bijection between the set of initial flags of $\Direct(G', \chi \act \const[+])$ and the set of initial flags of $\Direct(G', \const[+])$.
  Finally we note that, under the canonical identification of $\Edge(G)$ with the initial flags $S$ of $\Direct(G, \const[+])$ and the analogous identification for $G'$ (both of which are order preserving), the composition $\sigma \circ \restrict{\Direct(\chi, \const[+])}{S}$ corresponds precisely to the map $\Edge(G) \to \Edge(G')$ induced by $\chi$.
  This finishes the proof.
\end{proof}

Combining \cref{lemma:UDec_iso} with the isomorphism \eqref{eq:ss_first_page}, we obtain an isomorphism
\begin{equation} \label{eq:gc_identification}
  \UGC m {\operad C}  =  \colim{\Graph[\ge 3]} \UDec{m}{\operad C}  \iso  \colim{\Graph[\ge 3]} (\Ho{0} \circ \widetilde C_{\bullet,*})  \iso  \Ho 0 (C^2_{\bullet,*})
\end{equation}
of graded vector spaces.
As explained after \eqref{eq:ss_collapse}, we will now identify the result of pulling back the differential $d^1$ of $\Ho 0 (C^2_{\bullet,*})$ to $\UGC m {\operad C}$ along the isomorphism \eqref{eq:gc_identification}.
Moreover, note that, via the isomorphisms \eqref{eq:ss_collapse} and \eqref{eq:gc_identification}, the graded coalgebra structure of $\Ho \bullet \big( \DGCtrunc m {\operad C} \big)$ determines a graded coalgebra structure on $\Ho \bullet \big( \UGC{m}{\operad C} \big)$.
We will also describe this graded coalgebra structure explicitly.

\begin{definition}
  Let $G = (F, N, \mu, a) \in \Graph[\ge 3]$ be an undirected graph.
  A \emph{neighbor-closed vertex set} of $G$ is a subset $N' \subseteq N$ such that $\mu(\inv a(N')) = \inv a(N')$.
  A \emph{connected component} of $G$ is a neighbor-closed vertex set that is inclusion minimal among non-empty neighbor-closed vertex sets.
  
  A neighbor-closed vertex set $N'$ of $G$ determines an undirected graph
  \[ \induce[G](N')  \defeq  \big(F', N', \restrict \mu {F'}, \restrict a {F'} \big) \in \Graph[\ge 3] \]
  where $F' \defeq \inv a(N')$.
\end{definition}

\begin{remark}
  When $N' \subseteq N$ is a neighbor-closed vertex set of an undirected graph $G = (F, N, \mu, a)$, then $N \setminus N'$ is one as well.
  Moreover, any neighbor-closed vertex set of $G$ is a disjoint union of a unique set of connected components of $G$.
\end{remark}

\begin{definition} \label{def:UGC_diff}
  We equip the graded vector space $\UGC m {\operad C}$ of $\operad C$-decorated undirected graphs with the structure of a cocommutative differential graded coalgebra in the following way.
  The differential is given by
  \[ d(\eqcl x)  \defeq  \sum_{e \in \Edge(G)} d^G_e(x) \]
  where $G = (F, N, \mu, a) \in \Graph[\ge 3]$ is an undirected graph and $x \in \UDec{m}{\operad C}(G)$.
  Here the $d^G_e(x)$ are elements of $\UGC m {\operad C}$ uniquely determined by the following properties:
  \begin{itemize}
    \item For every isomorphism $\chi \colon G \to G'$ of undirected graphs holds $d^G_e(x) = d^{G'}_{\chi(e)}(\chi \act x)$.
    \item If $e$ is a loop, then $d^G_e(x) = 0$.
    \item If $e$ is not a loop, let $f < f'$ be the two flags making up $e$ and assume that $v \defeq a(f)$ and $v' \defeq a(f')$ are, in this order, the first two elements of $N$.
    Moreover assume that $a$ is order-preserving, and that $f$ is the first element of $F$ (in particular $e$ is the first element of $\Edge(G)$).
    Then $d^G_e$ applied to the element represented by $G$ and some decorations $(\xi_w)_{w \in N}$ is the element represented by the graph
    \[ \contract{e}(G)  \defeq  \big( F \setminus e, N / (v \sim v'), \restrict \mu {F \setminus e}, a' \defeq \pr \after \restrict a {F \setminus e} \big) \]
    (obtained from $G$ by contracting $e$) decorated by $\omega \act (\xi_v \opcomp[f]{f'} \xi_{v'})$ at the collapsed vertex $\bar v \defeq \pr(v) = \pr(v')$ and by $\xi_w$ at all other vertices $w$.
    Here $\omega$ is the canonical bijection between the linearly ordered set $\inv{a}(v) \opcomp[f]{\mu(f)} \inv{a}(v')$ of \cref{rem:cyclic_operad_comp} and $\inv{a'}(\bar v) \subseteq F \setminus e$.
  \end{itemize}

  The counit $\epsilon \colon \UGC m {\operad C} \to \QQ$ sends the element represented by the empty graph to $1$ and an element represented by any other graph to $0$.
  The comultiplication $\Delta \colon \UGC m {\operad C} \to \UGC m {\operad C} \tensor \UGC m {\operad C}$ is given as follows: for an element $x \in \UGC m {\operad C}$ represented by $G = (F, N, \mu, a)$ and decorations $(\xi_v)_{v \in N}$, we set
  \[ \Delta(x)  \defeq  \sum_{N'} (-1)^{m \card {N'} \card{N \setminus N'}} \delta(N') x_{N'} \tensor x_{N \setminus N'} \]
  where the sum runs over all neighbor-closed vertex subsets $N'$ of $G$ and $x_A$ is the element of $\UGC m {\operad C}$ represented by $\induce[G](A)$ and $(\xi_v)_{v \in A}$.
  The sign $\delta(N') \in \set {\pm 1}$ is the sign incurred by permuting
  \begin{gather*}
  (\shift 1)^{\tensor N} \tensor \Tensor_{v \in N} \shift[-m] \xi_v \tensor (\shift[m] 1)^{\tensor \Edge(G)} \\
  \intertext{into}
  (\shift 1)^{\tensor N'} \tensor \Tensor_{v \in N'} \shift[-m] \xi_v \tensor (\shift[m] 1)^{\tensor \Edge(G_1)} \tensor (\shift 1)^{\tensor N \setminus N'} \tensor \Tensor_{v \in N \setminus N'} \shift[-m] \xi_v \tensor (\shift[m] 1)^{\tensor \Edge(G_2)}
  \end{gather*}
  where $G_1 \defeq \induce[G](N')$ and $G_2 \defeq \induce[G](N \setminus N')$.
\end{definition}

\begin{remark}
  Note that it is easy to see that $\Delta$ is well-defined, i.e.\ that $\Delta(\chi \act x) = \Delta(x)$ for any isomorphism $\chi \colon G \to G'$.
  In the proof of \cref{lemma:UGC_diff} we will show that the elements $d^G_e(x)$ actually exist, which in particular implies that $d$ is a well-defined map.
  Moreover, we will construct a surjection from a cocommutative differential graded coalgebra onto $\UGC m {\operad C}$ that is compatible with the differentials, the counits, and the comultiplications.
  This implies that the maps above indeed equip $\UGC m {\operad C}$ with the structure of a cocommutative differential graded coalgebra.
\end{remark}

\begin{remark}
  The \emph{(undirected) graph complex} $\UGC{m}{\operad C}$, equipped with the differential graded coalgebra structure of \cref{def:UGC_diff}, is a well-known object.
  Variants of it were first described by Kontsevich \cite{Kon93,Kon94} and have, since then, appeared in the works of many different authors.
  Using the language of modular operads due to Getzler--Kapranov \cite{GK98}, one alternative description is
  \[ \dual{\UGC{m}{\operad C}}  \iso  \mathsf{F}_{\mathfrak T^{1 - m}}(\Sigma^{1 - m} \operad C)  \iso  \Sigma^{m - 1} \mathsf{F}_{\mathrm{Det}^{1 - m}}(\operad C) \]
  where $\mathsf F$ denotes the ``Feynman transform'', $\mathfrak T$ and $\mathrm{Det}$ are certain ``hyperoperads'', and $\Sigma$ denotes a degree shift.
  Other descriptions have been given for example by Lazarev--Voronov \cite[§3.1]{LV} (though with a different grading convention) and, in the case $m = 0$, by Conant--Vogtmann \cite[§2.3]{CV}.
  
  The homology of $\UGC{m}{\operad C}$ is in general very complicated and in most cases of interest only partial information is known.
  See, however, the following remarks for a couple of basic observations.
  Also see \cref{rem:lie_graph_homology} for an overview of what is known about this homology in the case of the cyclic Lie operad $\operad C = \Lie$.
\end{remark}

\begin{remark}
  Note that, up to a regrading, the differential graded coalgebra $\UGC{m}{\operad C}$, and hence its homology, only depend on the parity of $m$.
\end{remark}

\begin{remark} \label{rem:graph_complex_free}
  It follows from the description of the coalgebra structure of $\UGC {m} {\operad C}$ that the cohomology $\Coho * \big( \dual{\UGC {m} {\operad C}} \big)$ of its linear dual is a free graded commutative algebra generated by the cohomology of the dual of the subcomplex $\UGC[\mathrm{conn}] {m} {\operad C} \subseteq \UGC {m} {\operad C}$ spanned by the connected graphs.
  This subcomplex splits as a direct sum
  \[ \UGC[\mathrm{conn}] {m} {\operad C}  \iso  \Dirsum_{g \in \NN} \UGC[\mathrm{conn}; g] {m} {\operad C} \]
  where $\UGC[\mathrm{conn}; g] {m} {\operad C}$ denotes the subcomplex spanned by connected graphs of genus $g$ (here the \emph{genus} of a graph is $1$ plus the number of edges minus the number of vertices).
  Hence, understanding the homology of $\UGC {m} {\operad C}$ (or the cohomology of its dual) is equivalent to understanding the homology of $\UGC[\mathrm{conn}; g] {m} {\operad C}$ for all $g$ (or the cohomologies of their duals).
\end{remark}

\begin{remark} \label{rem:graph_complex_concentrated}
  Let $G \in \Graph[\ge 3]$ be a connected graph with $N$ vertices, $E$ edges, and genus $g = 1 - N + E$.
  Since every vertex is at least trivalent, we have $E \ge \frac{3}{2} N$, which implies $E \le 3 (g - 1)$.
  Since $G$ is non-empty, we have $N \ge 1$, which implies $g \le E$.
  (In particular there are no such graphs with $g \le 1$.)
  Now assume that $\operad C$ is concentrated in degree $0$.
  Then an element of $\UGC {m} {\operad C}$ represented by $G$ has homological degree $p = (1 - m) N + m E = E + (m - 1) (g - 1)$.
  This implies that, in this case, the subcomplex $\UGC[\mathrm{conn}; g] {m} {\operad C}$ of \cref{rem:graph_complex_free} is concentrated in those homological degrees $p$ such that $m (g - 1) + 1 \le p \le (m + 2) (g - 1)$.
\end{remark}

\begin{theorem} \label{lemma:UGC_diff}
  Let $m \in \NN$ be a natural number and $\operad C$ a cyclic operad concentrated in non-negative even degrees such that $\cyc{\operad C}{2} = \gen \basefield \id$ with the trivial $\Symm{2}$-action.
  Then there is an isomorphism of graded coalgebras
  \[ \Ho * \big( \UGC m {\operad C} \big)  \iso  \Ho * \big( \DGCtrunc m {\operad C} \big) \]
  where $\UGC m {\operad C}$ is equipped with the differential graded coalgebra structure of \cref{def:UGC_diff}.
\end{theorem}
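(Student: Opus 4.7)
The plan is to carry out the argument sketched throughout the section: use the spectral sequence \eqref{ss:DGCtrunc} associated to filtering $\DGCtrunc m {\operad C}$ by the number of bivalent vertices, show that it collapses on the second page, and identify the remaining row with $\UGC m {\operad C}$ equipped with the differential and coalgebra structure of \cref{def:UGC_diff}. All of the technical work has already been set up above; what remains is to assemble the pieces and check compatibility with the coalgebra structures.

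\textbf{Identification of the first page.} First I would combine \cref{lemma:cube_homology} with the decomposition $\colim{\Graph[\ge 3]} \widetilde C_{\bullet,*} \iso C^2_{\bullet,*}$ of \eqref{eq:ss_first_page} to conclude that $\Ho q (C^2_{p,*}) = 0$ for $q \neq 0$. Indeed, \cref{lemma:props_lifted_partial_diffs} verifies the hypotheses of \cref{lemma:cube_homology} for each $\widetilde C_{\bullet,*}(G)$, and since $\Graph[\ge 3]$ is equivalent to a disjoint union of finite groups, rational homology commutes with the colimit. Together with \cref{lemma:UDec_iso}, this shows the $E^1$-page of \eqref{ss:DGCtrunc} is concentrated in the row $q = 0$ and naturally isomorphic, as a graded vector space, to $\UGC m {\operad C}$.

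\textbf{Identification of the differential.} Next I would transport the induced differential $d^1$ on $\Ho 0 (C^2_{\bullet,*})$ back to $\UGC m {\operad C}$ along the isomorphism \eqref{eq:gc_identification}, and verify that it agrees with the formula of \cref{def:UGC_diff}. Using the explicit description of the quasi-isomorphism in \cref{lemma:double_complex_ss_collapse}, one represents a class in $\Ho 0 (C^2_{\bullet,*})$ by a cycle in $C_{p,0} = \DGCtrunc m {\operad C}_{p,0}$, that is, a graph with no bivalent vertices, which under \cref{lemma:UDec_iso} corresponds to the class in $\UDec m {\operad C}(G)$ obtained by directing every edge as $+$. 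The differential $d^1$ is then just the part of $d$ that contracts non-loop edges between trivalent-or-higher vertices, and a direct comparison with \cref{def:dgc} shows it matches the contraction $d^G_e$ of \cref{def:UGC_diff}, up to the signs collected when rewriting the contracted directed edge in canonical form via \cref{lemma:props_lifted_partial_diffs}(i), (iii).

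\textbf{Coalgebra structure.} For the compatibility with the coalgebra structures, I would observe that the filtration of $\DGCtrunc m {\operad C}$ by the number of bivalent vertices is a filtration of coalgebras, since splitting a graph into two neighbor-closed vertex subsets preserves the set of bivalent vertices. Therefore the spectral sequence \eqref{ss:DGCtrunc} is one of coalgebras, the $E^1$-page inherits a coalgebra structure, and the isomorphism of \eqref{eq:gc_identification} transports this to $\UGC m {\operad C}$. Running through the explicit identification above, the induced comultiplication on $\UGC m {\operad C}$ is given by summing over splittings of the vertex set into two neighbor-closed pieces and applying the canonical sign rule — which is exactly the formula in \cref{def:UGC_diff}. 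The isomorphism on the $E^\infty$-page then yields the desired graded coalgebra isomorphism $\Ho * (\UGC m {\operad C}) \iso \Ho * (\DGCtrunc m {\operad C})$.

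\textbf{Main obstacle.} The main difficulty is bookkeeping of signs. Checking that the pulled-back differential matches \cref{def:UGC_diff} on the nose requires tracking the signs introduced at each step: the Koszul signs in the double complex filtration, the sign $\sgn(\sigma)^m$ coming from permuting the basis elements of $\Or(e)^{\tensor m+1}$ that appears in \cref{lemma:UDec_iso}, and the signs produced by the contraction formula in $\DGCtrunc m {\operad C}$. A careful but routine verification, analogous to the proof of \cref{lemma:props_lifted_partial_diffs} and parallel to the sign computation in the proof of \cref{thm:graph_complex_identification}, reconciles them; the same remark applies to the verification of the comultiplication.
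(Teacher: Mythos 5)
Your proposal follows essentially the same route as the paper: collapse the spectral sequence \eqref{ss:DGCtrunc} via \cref{lemma:cube_homology,lemma:UDec_iso}, transport $d^1$ back along \eqref{eq:gc_identification} to recover the contraction differential of \cref{def:UGC_diff}, and exploit that the comultiplication of $\DGCtrunc m {\operad C}$ respects the number of bivalent vertices to get the coalgebra comparison. The only cosmetic difference is that you invoke a ``spectral sequence of coalgebras'' at the last step, whereas the paper works directly with the explicit surjection $\Psi$ of \cref{lemma:double_complex_ss_collapse} and shows $\Theta = \inv\Phi\after\Psi$ is a coalgebra map on the nose; the latter sidesteps any worry about whether $E^\infty$ really computes the homology coalgebra, but your version is correct here because the sequence collapses to a single row so no extension issues arise.
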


\begin{proof}
  We first show that there is an isomorphism $\UGC m {\operad C} \to \Ho 0 (C^2_{\bullet,*})$ of chain complexes, where the domain is equipped with the differential of \cref{def:UGC_diff} and the target with the differential induced by $d^1$.
  Together with \eqref{eq:ss_collapse}, this yields the desired isomorphism on the level of the underlying graded vector spaces.
  
  The isomorphism $\Phi \colon \UGC m {\operad C} \to \Ho 0 (C^2_{\bullet,*})$ of \eqref{eq:gc_identification} maps the element $\eqcl x \in \UGC m {\operad C}$ represented by $G = (F, N, \mu, a)$ and decorations $(\xi_v)_{v \in N}$ to the element $\eqcl y$ of $\Ho 0 (C^2_{\bullet,*})$ represented by the directed graph $D_+(G) \defeq \Direct(G, \const[+])$ and the decorations $\xi_v$.
  By \cref{def:dgc}, we have $d^1(\eqcl y) = \sum_{s \in S} d^{D_+(G)}_s(y)$ where $S$ is the set of initial flags of $D_+(G)$.
  There is a canonical bijection $\sigma^G \colon \Edge(G) \to S$ (which is order preserving).
  We now define, for $e \in \Edge(G)$,
  \[ d^G_e(x)  \defeq  \inv \Phi \Big( \eqcl[\big] { d^{D_+(G)}_{\sigma^G(e)}(\widetilde \Phi^G(x)) } \Big) \]
  where $\widetilde \Phi^G \colon \UDec{m}{\operad C}(G)  \to  \Dec{m}{\operad C}(D_+(G))$ is the isomorphism from \cref{lemma:UDec_iso}.
  We will now prove that these elements $d^G_e(x)$ fulfill the properties of \cref{def:UGC_diff}.
  This implies that the map $\Phi$ is an isomorphism of chain complexes as desired.
  
  First note that, if $e$ is a loop, then $d^{D_+(G)}_{\sigma^G(e)}(\widetilde \Phi(x)) = 0$ and hence $d^G_e(x) = 0$.
  Hence the second property of \cref{def:UGC_diff} is fulfilled.
  
  Now we show that $d^G_e(x) = d^{G'}_{\chi(e)}(\chi \act x)$ for any isomorphism $\chi \colon G \to G'$ and edge $e = \set {f_1 < f_2} \in \Edge(G)$.
  This is clear when $e$ is a loop.
  Hence it is enough to consider the case where $G$ fulfills that $a$ is order preserving, that $v_1 \defeq a(f_1)$ and $v_2 \defeq a(f_2)$ are (in this order) the first two elements of $N$, and that $f_1$ is the last element of $\inv a(v_1)$ and $f_2$ the first of $\inv a(v_2)$.
  (This is enough since any isomorphism of undirected graphs with a distinguished non-loop edge factors through a graph of this form.)
  We need to show that
  \begin{equation} \label{eq:undirected_diff_equiv}
    \eqcl[\big] { d^{D_+(G)}_{\sigma^G(e)} (\widetilde \Phi^G(x)) }  =  \eqcl[\big] { d^{D_+(G')}_{\sigma^{G'}(\chi(e))} (\widetilde \Phi^{G'}(\chi \act x)) }
  \end{equation}
  as elements of $\Ho 0 (C^2_{\bullet,*})$.
  We first note that the left hand side is equal to $(-1)^{m (\outdeg{v_1} - 1)}$ times the equivalence class of the element $y$ represented by $\contract{\sigma^G(e)}(D_+(G))$ and the decorations $\xi_{v_1} \opcomp[f_1]{f_2} \xi_{v_2}$ at the collapsed vertex $\pr(v_1) = \pr(v_2)$ and by $\xi_w$ at all other vertices $w$.
  Now, we set $D' \defeq \Direct(G, \inv \chi \act \const[+])$ and write $\psi \colon D' \to \Direct(G', \const[+]) = D_+(G')$ for the isomorphism induced by $\chi$.
  We further denote by $S'$ the linearly ordered set of initial flags of $D'$ and set $s' \defeq \inv \psi (\sigma^{G'}(\chi(e))) \in S'$.
  Then the right hand side of \eqref{eq:undirected_diff_equiv} is equal to $\eqcl { d^{D'}_{s'} (\inv \psi \act \widetilde \Phi^{G'}(\chi \act x)) }$.
  It follows from the definitions that $\inv \psi \act \widetilde \Phi^{G'}(\chi \act x)$ is $\sgn(\omega)^m (-1)^{k (1 + m)}$ times the element $x'$ of $\Dec{m}{\operad C}(D')$ represented by the decorations $(\xi_w)_{w \in N}$, where $k$ is the number of edges whose orientation is flipped by $\chi$, and $\omega \colon S \to S'$ is the canonical bijection.
  
  There are now two cases: either $\chi$ preserves the orientation of $e$, i.e.\ we have $\chi(f_1) < \chi(f_2)$, or it reverses it.
  In both cases, we have that $d^{D'}_{s'} (x')$ is some sign $\epsilon \in \set {\pm 1}$ times the element $y'$ represented by $\contract{s'}(D')$ and the decorations $\xi_{v_1} \opcomp[f_1]{f_2} \xi_{v_2}$ (in the first case) or $\xi_{v_2} \opcomp[f_2]{f_1} \xi_{v_1}$ (in the second case) at the collapsed vertex $\pr(v_1) = \pr(v_2)$ and $\xi_w$ at all other vertices $w$.
  In the first case, we have $\epsilon = (-1)^{m (\outdeg{v_1'} - 1)}$, where $v_1' \defeq v_1$ considered as a vertex of $D'$.
  In the second case, we have $\epsilon = (-1)^{1 + m + m \outdeg{v_1'}}$ (to see this, one transforms $D'$ into the form necessary to read off $d^{D'}_{s'}$, and then identifies the result of the contraction with $\contract{s'}(D')$).
  It follows from the proof of \cref{lemma:UDec_iso} that $\eqcl {y'} = \delta \eqcl {y} \in \Ho 0 (C^2_{\bullet,*})$, where $\delta = \sgn(\hat \omega)^m (-1)^{k(1 + m)}$ in the first case, and $\delta = \sgn (\hat \omega)^m (-1)^{(k - 1)(1 + m)}$ in the second.
  Here $\hat \omega$ is the canonical bijection $S \setminus \set {s} \to S' \setminus \set {s'}$, and $k$ is the number of edges flipped by $\chi$, as above.
  Lastly, we note that commutativity of
  \[
  \begin{tikzcd}
    \set {s} \cop (S \setminus \set {s}) \rar{\iso} \dar[swap]{\hat \omega} & S \dar{\omega} \\
    \set {s'} \cop (S' \setminus \set {s'}) \rar{\iso} & S'
  \end{tikzcd}
  \]
  implies that the product $\sgn(\omega) \sgn(\hat \omega)$ equals $(-1)^{\outdeg{v_1} - 1 + \outdeg{v_1'} - 1}$ in the first case, and $(-1)^{\outdeg{v_1} - 1 + \outdeg{v_1'}}$ in the second.
  This finishes the proof of \eqref{eq:undirected_diff_equiv}.
  
  We now prove that the elements $d^G_e(x)$ fulfill the third property of \cref{def:UGC_diff}.
  We assume that $e = \set {f_1 < f_2}$ is not a loop, that $v_1 \defeq a(f_1)$ and $v_2 \defeq a(f_2)$ are (in this order) the first two elements of $N$, that $a$ is order-preserving, and that $f_1$ is the first element of $F$.
  We also assume that $f_2$ is the first element of $\inv{a}(v_2)$ (we can arrange this without introducing a sign).
  Now, we let $D'$ be equal to $D_+(G)$ except that the linear order of $F$ has been changed such that $f_1$ is the last element of $\inv{a}(v_1)$ instead of the first.
  Moreover, we denote by $\chi \colon D_+(G) \to D'$ the canonical isomorphism.
  We have $d^{D_+(G)}_{f_1}(\widetilde \Phi(x)) = d^{D'}_{f_1}(\chi \act \widetilde \Phi(x))$ and that $\chi \act \widetilde \Phi(x)$ is $(-1)^{m (\outdeg {v_1} - 1)}$ times the element represented by $D'$ and the decorations $(\xi_w)_{w \in N}$.
  By definition we thus have that $d^{D'}_{f_1}(\chi \act \widetilde \Phi(x))$ is the element represented by $\contract{f_1}(D')$ and the decorations $(\restrict \sigma {\inv a(v_1)} \act \xi_{v_1}) \opcomp[f_1]{f_2} \xi_{v_2}$ at the collapsed vertex and $\xi_w$ at all other vertices $w$.
  Here $\sigma$ is the canonical bijection from $F$ to the linearly ordered set $F'$ of flags of $D'$.
  Lastly, we note that $\contract{f_1}(D') = \Direct(\contract{e}(G), \const[+])$ and that $(\restrict \sigma {\inv a(v_1)} \act \xi_{v_1}) \opcomp[f_1]{f_2} \xi_{v_2} = \omega \act (\xi_{v_1} \opcomp[f_1]{f_2} \xi_{v_2})$, where $\omega$ is as in \cref{def:UGC_diff}.
  Thus the third property is fulfilled as well.
  
  It remains to prove that the isomorphism is compatible with the graded coalgebra structures.
  By \cref{lemma:double_complex_ss_collapse}, the isomorphism \eqref{eq:ss_collapse} is induced by the surjection of chain complexes
  \[ \Psi \colon \DGCtrunc m {\operad C}  \longto  \big( \Ho 0 (C^2_{\bullet,*}), d^1 \big) \]
  that is given by the canonical projections $\DGCtrunc m {\operad C}_{k,0} \to \Ho 0 (C^2_{\bullet,*})$ and the trivial maps on $\DGCtrunc m {\operad C}_{k,l}$ for $l > 0$.
  Hence it is enough to show that the surjection
  \[ \Theta \defeq \inv {\Phi} \after \Psi \colon \DGCtrunc m {\operad C}  \longto  \UGC m {\operad C} \]
  is a map of graded coalgebras.
  
  It is clear that $\Theta$ is compatible with the counits.
  Now note that the comultiplication $\Delta$ of $\DGCtrunc m {\operad C}$ is compatible with its bigrading.
  In particular $(\Psi \tensor \Psi)(\Delta(x))$ is trivial for an element $\DGCtrunc m {\operad C}_{k,l}$ with $l > 0$.
  Hence it is enough to check that $\Theta$ is compatible with the comultiplication of $\DGCtrunc m {\operad C}_{*,0}$.
  Let $D = (F, N, S, T, \mu, a)$ be a truncated directed graph without vertices of valence two, and let $x \in \DGCtrunc m {\operad C}_{*,0}$ be the element represented by $D$ and some decorations $(\xi_v)_{v \in N}$.
  Let $o_D \in \dir(\Underlying(D))$ be defined by setting $o_D(\set{f_1 < f_2})$ to be $+$ if $f_1 \in S$ and $-$ otherwise.
  Then $\Direct(\Underlying(D), o_D)$ is equal to $D$.
  By the proof of \cref{lemma:UDec_iso}, this implies that $\Theta(x)$ is $\alpha(D) \defeq \sgn(\sigma_D)^m (-1)^{k_D(1+m)}$ times the element of $\UGC m {\operad C}$ represented by $\Underlying(D)$ and the decorations $\xi_v$.
  Here $k_D \defeq \card{\inv{o_D}(-)}$ and $\sigma_D \colon S \to \Edge(\Underlying(D))$ is the canonical bijection.
  
  Recall that
  \[ \Delta(x)  \defeq  \sum_{N'} (-1)^{m \card {N'} \card{N \setminus N'}} \epsilon(N') x_{N'} \tensor x_{N \setminus N'} \]
  where the sum runs over all neighbor-closed vertex subsets $N'$ of $D$ and $x_A$ is the element of $\DGC m {\operad C}$ represented by $\induce[D](A)$ and $(\xi_v)_{v \in A}$.
  The sign $\epsilon(N')$ is specified in \cref{def:dgc}.
  Applying $\Theta \tensor \Theta$ to the right hand side, we obtain
  \[ \sum_{N'} (-1)^{m \card {N'} \card{N \setminus N'}} \epsilon(N') y_{N'} \tensor y_{N \setminus N'} \]
  where $y_A$ is $\alpha(\induce[D](A))$ times the element of $\UGC m {\operad C}$ represented by $\Underlying(\induce[D](A)) = \induce[\Underlying(D)](A)$ and $(\xi_v)_{v \in A}$.
  Note that a subset $N' \subseteq N$ is a neighbor-closed vertex subset of $D$ if and only if it is a neighbor-closed vertex subset of $\Underlying(D)$.
  Lastly, we observe that
  \[ \alpha(D) \alpha \big( {\induce[D](N')} \big) \alpha \big( {\induce[D](N \setminus N')} \big) \epsilon(N') = \delta(N') \]
  where $\delta(N')$ is the sign of \cref{def:UGC_diff}.
  This finishes the proof.
\end{proof}

\begin{remark}
  A proof of \cref{lemma:UGC_diff} for $\DGC m {\operad C}$ instead of $\DGCtrunc m {\operad C}$ has been sketched by Willwacher \cite[Appendix~K]{Wil} (in the case of the commutative cyclic operad $\operad C = \Com$).
  Other related statements have been proven in detail by Živković \cite{Ziv} and Dolgushev--Rogers \cite{DR}.
\end{remark}

\begin{remark} \label{rem:lie_graph_homology}
  We quickly survey what is known about the homology of the undirected graph complex $\UGC {m} \Lie$ associated to the cyclic Lie operad.
  By work of Kontsevich \cite{Kon93} (see also Conant--Vogtmann \cite[Theorem~2 and §3.1]{CV}) and Lazarev--Voronov \cite[Corollary~3.13]{LV}, there are, for $g \ge 2$, isomorphisms
  \[ \Coho p \big( \dual{\UGC[\mathrm{conn}; g] {m} \Lie} \big)  \iso  \Ho {(m + 2)(g - 1) - p} \big( \Out(\freegrp g); \widetilde \QQ^{\tensor m} \big) \]
  where $\UGC[\mathrm{conn}; g] {m} \Lie$ is as in \cref{rem:graph_complex_free}.
  (Note that it follows from \cref{rem:graph_complex_concentrated} that $\UGC[\mathrm{conn}; g] {m} \Lie$ is trivial for $g \le 1$.)
  Here $\Out(\freegrp g)$ is the group of outer automorphisms of the free group on $g$ generators, and $\widetilde \QQ$ is the one-dimensional $\Out(\freegrp g)$ representation given by
  \[ \Out(\freegrp g)  \xlongto{\mathrm{ab}}  \Out(\mathrm F^{\mathrm{ab}}_g)  \iso  \GL[g](\ZZ)  \xlongto{\det}  \GL[1](\ZZ)  \longto  \GL[1](\QQ) \]
  (which is sometimes called the ``determinant representation'').
  
  The homology $\Ho * (\Out(\freegrp g); \QQ)$, occurring when $m$ is even, has been studied intensively.
  A recent state of the art is summarized in \cite[§1]{CHKV}, including certain stability results and computer calculations for small $g$.
  The only non-trivial homology (outside of degree $0$) with $g \le 4$ is $\Ho 4 ( \Out(\freegrp 4); \QQ ) \iso \QQ$.
  More recently, Borinsky--Vogtmann \cite{BV} obtained a formula and asymptotics for the Euler characteristic of $\Ho * (\Out(\freegrp g); \QQ)$.
  
  The homology $\Ho * (\Out(\freegrp g); \widetilde \QQ)$, occurring when $m$ is odd, is less well studied.
  The only structural results known to the author are Euler characteristic computations and asymptotics, again from \cite{BV}, as well as that
  \[ \Ho{0} \big( \Out(\freegrp g); \widetilde \QQ \big)  \iso  \coinv {\widetilde \QQ} {\Out(\freegrp g)}  \iso  0    \qquad\text{and}\qquad    \Ho{*} \big( \Out(\freegrp 2); \widetilde \QQ \big)  \iso  0 \]
  the latter of which follows from the fact that $\Out(\freegrp 2) \iso \GL[2](\ZZ)$ and the Eichler--Shimura isomorphism, see Conant--Hatcher--Kassabov--Vogtmann \cite[§3.5]{CHKV}.
  Apart from this, there are computer calculations for small $g$ by Brun--Willwacher \cite[Figure~9]{BW}.
  Their work shows that the only non-trivial homology with $g \le 4$ is $\Ho 3 (\Out(\freegrp 4); \widetilde \QQ) \iso \QQ$.
  Using \cref{rem:graph_complex_concentrated}, this implies that $\Ho p (\UGC m \Lie)$ is trivial for $0 < p \le \min(3m + 2, 4m)$ when $m$ is odd.
\end{remark}

\section{Connected sums of products of spheres} \label{sec:surfaces}

Throughout this section we let $3 \le k < l \le 2k - 2$ be two integers.
We set
\[ M^{k, l}_{g, r}  \defeq  \Connsum_g (\Sphere{k} \times \Sphere{l}) \setminus \textstyle\coprod\limits_r \openDisk{k + l} \]
i.e.\ a $g$-fold connected sum of $\Sphere{k} \times \Sphere{l}$ with $r$ disjoint $(k + l)$-dimensional open disks removed (we will often omit $k$ and $l$ from the notation and simply write $M_{g,r}$).
We are interested in the stable cohomology of
\[ \B \autbdry (M_{g,1}) \]
i.e.\ of the classifying space of the group-like topological monoid of homotopy equivalences from $M_{g,1}$ to itself that fix the boundary pointwise.
To make sense of this, we need to specify stabilization maps $\B \autbdry (M_{g,1}) \to \B \autbdry (M_{g+1,1})$.
To this end, we note that there is a canonical inclusion
\[ \iota_g \colon M_{g,1} = M_{g,0} \setminus \openDisk{k + l}  \longincl  M_{g,0} \connsum M_{1,1}  =  M_{g+1,1} \]
(where $\connsum$ denotes connected sum) which induces a map of topological monoids
\[ \phi_g  \colon  \autbdry(M_{g,1})  \longto  \autbdry(M_{g+1,1}) \]
by extending an element of $\autbdry(M_{g,1})$ by the identity.

We will now introduce some related objects, which we will need for stating and proving our main result.
We start by fixing some basepoints of $\Sphere k$ and $\Sphere l$.
Then, for $1 \le i \le g$, we denote by $\alpha^g_i \in \Ho{k}(M_{g,1}; \ZZ)$ the homology class represented by the canonical inclusion of $\Sphere k$ into the $i$-th summand, and by $\dualbasis {(\alpha^g_i)} \in \Ho{l}(M_{g,1}; \ZZ)$ the class represented by the canonical inclusion of $\Sphere l$ into the $i$-th summand (we assume that these inclusions do not intersect the disks used to form the connected sum).
These classes form bases of $\Ho{k}(M_{g,1}; \ZZ) \iso \ZZ^g$ and $\Ho{l}(M_{g,1}; \ZZ) \iso \ZZ^g$, respectively.
We choose the orientations such that the intersection pairing fulfills $\intpair{\alpha^g_i}{\dualbasis {(\alpha^g_i)}} = 1$.
Also note that $\intpair{\alpha^g_i}{\dualbasis {(\alpha^g_j)}} = 0$ when $i \neq j$.
For $1 \le i \le g$, we moreover have $(\iota_g)_*(\alpha^g_i) = \alpha^{g+1}_i$ and $(\iota_g)_* \big( \dualbasis {(\alpha^g_i)} \big) = \dualbasis {(\alpha^{g+1}_i)}$.

Using this, we can define a stabilization map of groups
\[ \psi_g \colon  {\Aut} \big( \rHo{*}(M_{g,1}; \ZZ) \big)  \longto  {\Aut} \big( \rHo{*}(M_{g+1,1}; \ZZ) \big) \]
by letting $\psi_g(f)$ be the unique automorphism such that $\psi_g(f) \after (\iota_g)_* = (\iota_g)_* \after f$ and such that $\psi_g(f)$ is the identity on $\alpha^{g+1}_{g+1}$ and $\dualbasis{(\alpha^{g+1}_{g+1})}$.
(Here $\Aut$ denotes automorphisms of graded abelian groups.)
We now set
\[ \Gamma_g  \defeq  {\im} \big( {\htpygrp 0 (\autbdry(M_{g,1}))}  \to  \Aut(\rHo{*}(M_{g,1}; \ZZ)) \big) \]
i.e.\ it is the group of those automorphisms of the reduced integral homology of $M_{g,1}$ that can be realized by a homotopy automorphism that fixes the boundary pointwise.
Together, the maps $\phi_g$ and $\psi_g$ induce a stabilization map of groups $\gamma_g \colon \Gamma_g \to \Gamma_{g+1}$.
The group $\Gamma_g$ has been identified explicitly by Grey \cite{Gre}.

\begin{proposition}[Grey] \label{prop:Grey}
  We have
  \[ \Gamma_g = {\Aut} \big( \rHo{*}(M_{g,1}; \ZZ), \intpair{\blank}{\blank} \big)  \subseteq  {\Aut} \big( \rHo{*}(M_{g,1}; \ZZ) \big) \]
  where $\intpair{\blank}{\blank}$ is the intersection pairing.
  Moreover, the canonical map
  \[ {\htpygrp 0} \big( \autbdry(M_{g,1}) \big)  \longto  \Gamma_g \]
  has finite kernel.
\end{proposition}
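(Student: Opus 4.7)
The statement has three components: (i) the containment $\Gamma_g \subseteq \Aut(\rHo{*}(M_{g,1}; \ZZ), \intpair{\blank}{\blank})$, (ii) the reverse containment, and (iii) finiteness of the kernel of $\htpygrp 0 \autbdry(M_{g,1}) \to \Gamma_g$. I would treat them separately, with (iii) being the main obstacle.

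Claim (i) is formal. Any $f \in \autbdry(M_{g,1})$ restricts to the identity on $\partial M_{g,1}$, so the long exact sequence of the pair $(M_{g,1}, \partial M_{g,1})$ forces $f_*$ to act trivially on $\Ho{k+l}(M_{g,1}, \partial M_{g,1}; \ZZ) \iso \ZZ$ (a local calculation near the boundary pins the sign down to $+1$). Hence $f_*$ fixes the relative fundamental class, and since the intersection pairing on $\rHo{*}(M_{g,1}; \ZZ)$ is Poincar\'e--Lefschetz dual to cup product evaluated against this class, it is preserved by $f_*$.

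For (ii), I would exhibit an explicit generating set of the symplectic group $\Aut(\rHo{*}(M_{g,1}; \ZZ), \intpair{\blank}{\blank})$ and realise each generator by a self-diffeomorphism of $M_{g,1}$. Natural choices are: permutations of the $g$ connect-summands, realised by rearranging the copies of $\Sphere k \times \Sphere l$; sign-changes on individual summands, realised by orientation reversals of $\Sphere k$ or $\Sphere l$ on a single factor; and the remaining unipotent transvections, realised by the higher-dimensional ``sphere twists'' obtained from generators of $\htpygrp{k}(\mathrm{SO}(l))$ or $\htpygrp{l}(\mathrm{SO}(k))$ applied to tubular neighbourhoods of embedded spheres with trivial normal bundle. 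The range $3 \le k < l \le 2k - 2$ ensures that the required embeddings exist and that the resulting twists have the expected effect on homology.

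The hardest part is (iii). Here my plan is to use that $M_{g,1}$ is simply connected (as $k \ge 3$) and has the homotopy type of the wedge $\bigvee_g (\Sphere k \vee \Sphere l)$, so that its rational Quillen model is the free graded Lie algebra on $2g$ generators in degrees $k - 1$ and $l - 1$. By results of Sullivan and Wilkerson, $\htpygrp 0 \autbdry(M_{g,1})$ is commensurable with the integer points of an algebraic group whose rational points act on the rational homotopy type, and the kernel of the homology representation is contained in its unipotent radical. A direct analysis using the (relative) minimal Lie model shows that the degree-zero derivations acting trivially on generators vanish, so this radical is trivial and the integral kernel is finite. This dimensional vanishing, which depends crucially on the assumption $3 \le k < l \le 2k - 2$, is essentially the content of Grey's argument.
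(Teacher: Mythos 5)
The paper proves this by citing \cite[Proposition~5.1]{Gre} and noting that the extra quadratic structure $Jq$ appearing in Grey's statement is necessarily trivial (void by definition when $k+l$ is odd, and vacuous when $k+l$ is even since $\Ho{\nicefrac{(k+l)}{2}}(M_{g,1};\ZZ)=0$ for $k \neq l$). You propose a from-scratch argument instead, which is a genuinely different route, but parts of it have gaps.

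Part (i) is standard and fine. In part (ii) the mechanism you propose for the transvections does not work: a twist supported in a tubular neighbourhood $\Sphere{k} \times \openDisk{l}$ of an embedded sphere with trivial normal bundle, parametrized by $\htpygrp{k}(\mathrm{SO}(l))$, acts trivially on homology in these dimensions (it preserves the core sphere, and any other homology class can be represented by a cycle meeting the core sphere in a finite set of points, which a fibrewise rotation does not affect), so it cannot realize a transvection in $\GL[g](\ZZ) \iso \Aut(\rHo{*}(M_{g,1};\ZZ), \intpair{\blank}{\blank})$. One should use a handle slide instead, or — since only a homotopy automorphism is required, not the self-diffeomorphisms you over-ask for — work with the equivalence $M_{g,1} \simeq \bigvee_g(\Sphere{k}\vee\Sphere{l})$ and the fact that $M_{g,0}$ is obtained by attaching a single top cell along the Whitehead product $\sum_i [\iota_{k,i},\iota_{l,i}]$; one then checks that any automorphism preserving the intersection pairing preserves the homotopy class of this attaching map (which is where the dimension constraint and the triviality of $Jq$ enter), so it extends to a self-equivalence of $M_{g,1}$ rel $\partial$. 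In part (iii) the framework is right (Sullivan--Wilkerson arithmeticity and the Quillen model), but the degree-zero vanishing of the relevant derivation Lie algebra is precisely what needs to be established, and you close by saying it is ``essentially the content of Grey's argument,'' which defers rather than resolves the point. The rel-boundary version of Sullivan--Wilkerson that you invoke is also not immediate; it is part of what the machinery of \cite{BZ} and \cite{Sto} provides.
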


\begin{proof}
  This is \cite[Proposition~5.1]{Gre}.
  (The extra structure $Jq$ mentioned there is necessarily trivial:
  When $k+l$ is odd, it is trivial by definition.
  When $k+l$ is even, it is trivial since $\Ho {\nicefrac{(k + l)}{2}} (M_{g,1}; \ZZ) \iso 0$ as $k \neq l$.)
\end{proof}

This implies, by the same arguments as in the proofs of \Cref{lemma:identify_with_dual,lemma:aut_is_gl}, that
\[ \Gamma_g  \iso  {\Aut} \big( \Ho{k}(M_{g,1}; \ZZ) \big) \]
acts on $\Ho{k}(M_{g,1}; \ZZ)$ via the standard action and on $\Ho{l}(M_{g,1}; \ZZ) \iso \dual{\Ho{k}(M_{g,1}; \ZZ)}$ via its dual.
The bases $\alpha^g_i$ of $\Ho{k}(M_{g,1}; \ZZ)$ constructed above yield isomorphisms $\Gamma_g \iso \GL[g](\ZZ)$ compatible with the $\gamma_g$ and the standard inclusions $\GL[g](\ZZ) \to \GL[g+1](\ZZ)$.
Similarly we obtain isomorphisms
\[ \Gamma^\QQ_g \defeq {\Aut} \big( \rHo{*}(M_{g,1}; \QQ), \intpair{\blank}{\blank} \big)  \iso  \GL[g](\QQ) \]
compatible with the stabilization maps.

Lastly we set
\[ \lie g_g  \defeq  \trunc{{\Der[\omega]} \big( \freelie(\shift[-1] \rHo{*}(M_{g,1}; \QQ)) \big)} \]
i.e.\ it is the positive truncation of the graded Lie algebra of derivations that annihilate the element $\omega \defeq \sum_i \liebr{\beta_i}{\dualbasis{\beta_i}}$ where $\beta_1, \dots, \beta_g$ is some basis of $\shift[-1] \Ho{k}(M_{g,1}; \QQ)$ and $\dualbasis {\beta_1}, \dots, \dualbasis {\beta_g}$ is the basis of $\shift[-1] \Ho{l}(M_{g,1}; \QQ)$ dual to it with respect to the intersection pairing; see \cite[§3.4~f.]{BM} for more details.
We note that the canonical action of $\Gamma^\QQ_g$ on $\shift[-1] \rHo{*}(M_{g,1}; \QQ)$ preserves the element $\omega$.
Hence we obtain an action of $\Gamma^\QQ_g$ on $\lie g_g$ by conjugation.
Restricting along the canonical inclusion $\Gamma_g \to \Gamma^\QQ_g$, we also obtain an action of $\Gamma_g$ on $\lie g_g$.

We now construct a stabilization map $\delta_g \colon \lie g_g \to \lie g_{g+1}$.
For $\theta \in \lie g_g$, the derivation $\delta_g(\theta)$ is uniquely determined by requiring it to restrict to $\theta$ along the map induced by $\iota_g$ and asking that $\delta_g(\theta) \big( \shift[-1] \alpha^{g+1}_{g+1} \big) = 0$ and $\delta_g(\theta) \big( \shift[-1] \dualbasis{(\alpha^{g+1}_{g+1})} \big) = 0$.
Here $\alpha^{g+1}_i$ is the basis of $\Ho{k}(M_{g+1,1}; \ZZ) \subseteq \Ho{k}(M_{g+1,1}; \QQ)$ from above.
It is clear from the definition that $\delta_g(\theta)(\omega) = 0$, so that $\delta_g$ indeed defines a map $\lie g_g \to \lie g_{g+1}$ of graded Lie algebras.
Moreover, this becomes a map of $\Gamma^\QQ_g$-modules when the target is equipped with the $\Gamma^\QQ_g$-action obtained by restricting along $\gamma_g$.

\begin{remark} \label{rem:X_g_cover}
  The graded Lie algebra $\lie g_g$ is a Lie model for the homotopy fiber of the map $\B \autbdry (M_{g,1})  \longto  \B \Gamma_g$ and one can think of the action of $\Gamma_g$ on $\lie g_g$ as modeling the action of $\Gamma_g \iso \htpygrp{1}(\B \Gamma_g)$ on this fiber; see \cite[Proposition~5.6]{BM} and \cite{Sto} for precise statements (for non-relative self-equivalences this is \cite[Theorem~3.40]{BZ}).
\end{remark}

Berglund--Zeman \cite{BZ} recently developed methods for identifying the cohomology of a classifying space of self-equivalences $\B \aut(X)$ in terms of certain group cohomology.
The details of applying this to the relative situation will appear in the upcoming paper \cite{Sto} joint with Berglund.
In particular the following statement is proved there; one can think of it as a strengthening of collapse of the Serre spectral sequence associated to the fiber sequence of \cref{rem:X_g_cover}.

\begin{proposition}[Berglund--Zeman, Berglund--Stoll] \label{prop:BZ}
  There are, for any $g \in \NN$ and $\Gamma_g$-module $P$, isomorphisms of graded vector spaces
  \[ \Theta_g(P)  \colon  {\Coho *} \big( \B \autbdry (M_{g,1}); P \big)  \xlongto{\iso}  {\Coho *} \big( \Gamma_g; \CEcoho * (\lie g_g) \tensor P \big) \]
  that are compatible with the stabilization maps induced by $\phi_{g-1}$, $\gamma_{g-1}$, and $\delta_{g-1}$, in the sense that the diagram
  \[
  \begin{tikzcd}[column sep = 70]
    {\Coho *} \big( \B \autbdry (M_{g,1}); P \big) \rar{\Theta_g(P)} \ar{dd}[swap]{\phi_{g-1}^*} & {\Coho *} \big( \Gamma_g; \CEcoho * (\lie g_g) \tensor P \big) \dar{\gamma_{g-1}^*} \\
     & {\Coho *} \big( \Gamma_{g-1}; \gamma_{g-1}^* (\CEcoho * (\lie g_g) \tensor P) \big) \dar{\delta_{g-1}^*} \\
    {\Coho *} \big( \B \autbdry (M_{g - 1,1}); \gamma_{g-1}^*(P) \big) \rar{\Theta_{g-1}(\gamma_{g-1}^*(P))} & {\Coho *} \big( \Gamma_{g - 1}; \CEcoho * (\lie g_{g - 1}) \tensor \gamma_{g-1}^*(P) \big)
  \end{tikzcd}
  \]
  commutes when $g \ge 1$.
  The map $\Theta_g(P)$ is an isomorphism of graded algebras when $P = \QQ$ is trivial, and an isomorphism of modules over these algebras for general $P$.
\end{proposition}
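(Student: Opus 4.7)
The plan is to realize $\B \autbdry(M_{g,1})$ rationally as a homotopy orbit of a $\Gamma_g$-equivariant cdga and then compute its cohomology as group cohomology of $\Gamma_g$ with appropriate coefficients. Concretely, I would start from the fibration sequence
\[ X_g \longto \B \autbdry(M_{g,1}) \longto \B \Gamma_g \]
obtained by passing to the $1$-connected cover $X_g$ as in \cref{rem:X_g_cover}. The crucial input is that $\lie g_g$ is a $\Gamma_g$-equivariant Quillen Lie model for $X_g$, giving $\Coho *(X_g; \QQ) \iso \CEcoho *(\lie g_g)$ as $\Gamma_g$-modules. The Serre spectral sequence of the fibration then has $E_2$-page $\Coho p(\Gamma_g; \CEcoho q(\lie g_g) \tensor P)$ converging to the left-hand side of the claimed isomorphism, so the target of $\Theta_g(P)$ is already visible.

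The main step is to show that the spectral sequence collapses at $E_2$ and that there is an actual isomorphism of (graded) algebras, not merely of associated gradeds. Here I would invoke the framework of Berglund--Zeman \cite{BZ}, which produces a strict rational model for $\B \aut(X)$ as a twisted cdga built out of $\CEcochains*(L_X)$ for $L_X$ a Lie model, together with its natural action by $\aut(L_X)$. In our equivariant setting this yields a cdga model for $\B \autbdry(M_{g,1})$ that is computed, as a derived functor, by the totalization of a $\Gamma_g$-equivariant cdga whose cohomology is $\CEcoho *(\lie g_g)$. Taking homotopy orbits and computing cohomology then produces $\Theta_g(P)$; because the construction is at the level of cdgas and cdga-modules, the multiplicativity for $P = \QQ$ and the module structure for general $P$ are automatic.

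The hard part will be adapting the Berglund--Zeman machinery, which in \cite{BZ} is developed for $\aut$ of a pointed $1$-connected space, to the relative setting $\autbdry(M_{g,1})$ in which an entire embedded disk is fixed pointwise. This requires constructing an equivariant Lie model that keeps track of the boundary data and verifying that this model realizes $\lie g_g$ together with its $\Gamma_g$-action; this is precisely the content of the upcoming paper \cite{Sto} joint with Berglund. Given this, the stated compatibility with the stabilization maps $\phi_{g-1}$, $\gamma_{g-1}$, and $\delta_{g-1}$ drops out from the naturality of the whole construction in the inclusion $\iota_g$: by definition $\delta_{g-1}$ is the map of equivariant Lie models induced by $\iota_g$, so the family $(\Theta_g(P))_g$ assembles into a morphism of towers and the asserted commutative square holds.
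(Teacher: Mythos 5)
Your proposal is correct in approach and matches the paper's own treatment: the paper does not give a proof of this proposition but defers it to \cite[Theorem~4.40]{BZ} and the forthcoming companion paper \cite{Sto}, which is exactly what you do, and your sketch of a $\Gamma_g$-equivariant cdga model built from $\CEcochains*(\lie g_g)$ is an accurate account of the argument those references supply. In particular, you rightly note that a Serre spectral sequence argument alone would not yield the graded-algebra isomorphism, and that the technical heart is the adaptation of the Berglund--Zeman machinery from $\aut(X)$ to relative self-equivalences fixing an embedded disk.
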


\begin{proof}
  For $P = \QQ$ and without the compatibility with the stabilization maps this is stated (without a detailed proof) as \cite[Theorem~4.40]{BZ}.
  A full proof will appear in the upcoming paper \cite{Sto} joint with Berglund.
\end{proof}

The rest of this section is devoted to providing, for certain $\Gamma_g$-modules $P$, an explicit description of $\Coho{*}(\Gamma_g; \CEcoho{*}(\lie g_g) \tensor P)$ in a stable range of degrees.

\subsection{Reduction to invariants}

In this subsection our goal is to prove that there is an isomorphism of bigraded algebras
\[ \Coho{p} \big( \Gamma_g; \CEcoho{q}(\lie g_g) \tensor P \big)  \iso  \Coho{p}(\Gamma_g; \QQ) \tensor \Coho{q} \Big( \big( \CEcochains{*}(\lie g_g) \tensor P \big)^{\Gamma^\QQ_g} \Big) \]
in a stable range of degrees.
This will mainly rely on a result of Li--Sun \cite{LS19}.
We need some preliminaries.

\begin{definition}
  A \emph{Schur bifunctor} is a functor $F \colon \Vect \times \Vect \to \Vect$ of the form
  \[ F(V_1, V_2)  =  \Dirsum_{n_1, n_2 \in \NN} \bijmod M(n_1, n_2) \tensor[\Symm{n_1} \times \Symm{n_2}] (V_1^{\tensor n_1} \tensor V_2^{\tensor n_2}) \]
  where the $\bijmod M(n_1, n_2)$ are some right $(\Symm{n_1} \times \Symm{n_2})$-modules in $\Vect$.
  We say $F$ has \emph{degree~$\le d$} if $\bijmod M(n_1, n_2) \iso 0$ for all pairs $(n_1, n_2)$ such that $n_1 + n_2 > d$.
  We say $F$ is of \emph{finite type} if each $\bijmod M(n_1, n_2)$ is finite dimensional.
\end{definition}

\begin{lemma} \label{lemma:sl_vs_gl_inv}
  Let $F$ be a finite type Schur bifunctor of degree $\le d$.
  Then the canonical inclusion
  \[ F(V, \dual{V})^{\GL(V)}  \longto  F(V, \dual{V})^{\SL(V)} \]
  is an isomorphism for all finite-dimensional vector spaces $V$ such that $\dim V > d$.
  Here $f \in \GL(V)$ acts on $\dual{V}$ by $\dual{(\inv f)}$ and diagonally on $F(V, \dual{V})$.
\end{lemma}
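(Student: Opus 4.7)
The strategy is to decompose $F(V, \dual V)$ into its monomial summands indexed by $(n_1, n_2)$ and run a weight argument on each.

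First I would use the decomposition
\[ F(V, \dual V) = \Dirsum_{n_1 + n_2 \le d} \bijmod M(n_1, n_2) \tensor_{\Symm{n_1} \times \Symm{n_2}} \big( V^{\tensor n_1} \tensor \dual V^{\tensor n_2} \big) \]
coming from the definition of a Schur bifunctor together with the degree hypothesis. Since $\GL(V)$ and $\SL(V)$ act trivially on the $\bijmod M(n_1, n_2)$-factor, and since quotienting by the finite group $\Symm{n_1} \times \Symm{n_2}$ is exact in characteristic zero and commutes with taking invariants under either group, it suffices to prove that the canonical inclusion $W_{n_1, n_2}^{\GL(V)} \longto W_{n_1, n_2}^{\SL(V)}$ is an isomorphism for every pair $(n_1, n_2)$ with $n_1 + n_2 \le d$, where $W_{n_1, n_2} \defeq V^{\tensor n_1} \tensor \dual V^{\tensor n_2}$.

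Set $N \defeq \dim V$. The center of $\GL(V)$ (the scalar matrices) acts on $W_{n_1, n_2}$ through the character $\lambda \mapsto \lambda^{n_1 - n_2}$. If $n_1 = n_2$, this action is trivial, so every $\SL(V)$-invariant element is automatically fixed by all of $\GL(V)$, and there is nothing to prove. Otherwise $W_{n_1, n_2}^{\GL(V)} = 0$, and I need to show that $W_{n_1, n_2}^{\SL(V)}$ also vanishes; this is where the hypothesis $d < N$ enters and is the only non-formal step of the argument.

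For this I would consider weights with respect to the standard diagonal maximal torus $T \subset \GL(V)$. Every weight $\mu = (\mu_1, \dots, \mu_N) \in \ZZ^N$ appearing in $W_{n_1, n_2}$ satisfies $\sum_i \mu_i = n_1 - n_2$. On the other hand, a vector invariant under $T \intersect \SL(V)$ (and in particular any $\SL(V)$-invariant vector) must live in a weight space whose character is trivial on $T \intersect \SL(V)$, which forces $\mu = (k, \dots, k)$ for some $k \in \ZZ$. Combining both constraints gives $Nk = n_1 - n_2$, and the bound $0 < \card{n_1 - n_2} \le n_1 + n_2 \le d < N$ leaves no such integer $k$. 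Hence $W_{n_1, n_2}^{\SL(V)} = 0$, completing the proof. The substantive content is this last weight count; everything else is formal bookkeeping.
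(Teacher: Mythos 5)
Your proof takes a genuinely different route from the paper's. The paper dualizes to \emph{coinvariants}, observes that surjectivity is clear, and establishes injectivity concretely: for each relation induced by some $f \in \GL(V)$, because $n_1 + n_2 < \dim V$ there is an unused basis vector $e_i$, and multiplying $f$ by a diagonal matrix that only rescales $e_i$ produces an $f' \in \SL(V)$ inducing the same relation. Your approach stays on the invariants side and argues via the weight-space decomposition for the diagonal torus. Both use the dimension hypothesis in the same essential way; the paper's argument is more hands-on, yours uses slightly more representation-theoretic machinery but is arguably cleaner for the $n_1 \neq n_2$ case.

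There is, however, a gap in your $n_1 = n_2$ case. You claim that because the center acts trivially, ``every $\SL(V)$-invariant element is automatically fixed by all of $\GL(V)$.'' This would follow if $\GL(V)$ were generated by $\SL(V)$ and scalar matrices, but over $\QQ$ it is not: the map $\SL_N(\QQ) \times \QQ^\times \to \GL_N(\QQ)$, $(s, \lambda) \mapsto \lambda s$, has image exactly those $g$ with $\det g \in (\QQ^\times)^N$, which is a proper subgroup for $N \geq 2$. So ``center acts trivially'' alone does not reduce $\GL(V)$-invariance to $\SL(V)$-invariance. The fix is to reuse your own weight argument: an $\SL(V)$-invariant vector has all its weights of the form $(k,\dots,k)$, and since $\sum_i \mu_i = n_1 - n_2 = 0$ this forces $k = 0$, so the vector lies in the zero weight space and is hence $T$-invariant; since $\GL_N(\QQ) = T(\QQ) \cdot \SL_N(\QQ)$ (for any $g$, the diagonal matrix $\operatorname{diag}(\det g, 1, \dots, 1)$ gives a $T$-coset representative), $T$- plus $\SL$-invariance yields $\GL$-invariance. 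Alternatively one could invoke Zariski density of $\SL_N(\QQ)$ in $\SL_N$ (as the paper does later in the proof of \cref{prop:red_to_invariants}) together with the algebraic-group identity $\GL_N = Z \cdot \SL_N$, but that is heavier than necessary here. With this correction, your argument is sound.
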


\begin{proof}
  Using, for $G$ a group and $W$ a $G$-module, the natural (in both $W$ and $G$) isomorphism $\dual{(\coinv W G)} \iso (\dual W)^G$, we can pass to the dual situation, i.e.\ take coinvariants instead of invariants.
  (To see that $\dual{F(V, \dual{V})}$ is again a Schur bifunctor, we use that $\Sigma \defeq \Symm{n_1} \times \Symm{n_2}$ is finite and that we are working in characteristic zero, so that $\dual{(M \tensor[\Sigma] N)} \iso \dual M \tensor[\Sigma] \dual N$.)
  
  Both the $\GL(V)$-action and the $\SL(V)$-action are compatible with the direct sum composition of $F(V, \dual{V})$, so that it is enough to prove for a single summand
  \[ W  \defeq  \bijmod M(n_1, n_2) \tensor[\Symm{n_1} \times \Symm{n_2}] \big( V^{\tensor n_1} \tensor (\dual V)^{\tensor n_2} \big) \]
  with $n_1 + n_2 < \dim V$ that $\coinv W {\SL(V)} \to \coinv W {\GL(V)}$ is an isomorphism.
  Denote by $e_1, \dots, e_g$ some basis of $V$.
  Then the equivalence relation defining $\coinv W {\GL(V)}$ is generated by
  \begin{equation} \label{eq:GL_coinv_rel}
    m \tensor \Tensor_{j = 1}^{n_1} e_{i_j}  \tensor \Tensor_{j' = 1}^{n_2} \dualbasis e_{i'_{j'}}  \sim  m \tensor \Tensor_{j = 1}^{n_1} f(e_{i_j})  \tensor \Tensor_{j' = 1}^{n_2} \dual{(\inv f)} (\dualbasis e_{i'_{j'}})
  \end{equation}
  for all $f \in \GL(V)$, $m \in \bijmod M(n_1, n_2)$, and $1 \le i_j, i'_{j'} \le g$.
  Choose some $1 \le i \le g$ such that $i \neq i_j, i'_{j'}$ for all $j$ and $j'$ (this is possible by our assumption that $\dim V > n_1 + n_2$).
  Now let $f' \defeq f \after h \in \GL(V)$ where $h$ is defined by $h(e_i) = \inv{(\det f)} e_i$ and $h(e_j) = e_j$ for all $j \neq i$.
  Then $f' \in \SL(V)$ and it yields the same relation \eqref{eq:GL_coinv_rel} as $f$ did.
  Hence $\coinv W {\SL(V)} \to \coinv W {\GL(V)}$ is injective.
  Since it is clearly surjective, it is thus an isomorphism.
\end{proof}

Now we are ready to prove the main result of this subsection.
See \cite[Theorem~8.4]{BM} for the analogous statement in the case $k = l$; the stable range occurring in their result has been improved by Krannich \cite{Kra} using an argument similar to the one we will employ.

\begin{proposition} \label{prop:red_to_invariants}
  Let $Q$ be a finite type Schur bifunctor of degree $\le r$ and set $P \defeq Q \big( \Ho k (M_{g,1}; \QQ), \Ho l (M_{g,1}; \QQ) \big)$ as a $\Gamma^\QQ_g$-module.
  Then the canonical map of bigraded vector spaces
  \begin{multline*}
    \Coho{p}(\Gamma_g; \QQ) \tensor \Coho{q} \Big( \big( \CEcochains{*}(\lie g_g) \tensor P \big)^{\Gamma^\QQ_g} \Big)  \iso  \Coho{p} \Big( \Gamma_g; \Coho{q} \Big( \big( \CEcochains{*}(\lie g_g) \tensor P \big)^{\Gamma^\QQ_g} \Big) \Big) \longto \\
    \Coho{p} \Big( \Gamma_g; \Coho{q} \big( \CEcochains{*}(\lie g_g) \tensor P \big) \Big)  \iso  \Coho{p} \big( \Gamma_g; \CEcoho{q}(\lie g_g) \tensor P \big)
  \end{multline*}
  is an isomorphism for $g > {\max} \big( p + 1, \frac{1}{2} (q + 1) + r \big)$.
\end{proposition}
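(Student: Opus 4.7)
The plan is to split the $\Gamma^\QQ_g$-representation $N_* \defeq \CEcochains{*}(\lie g_g) \tensor P$ into its $\Gamma^\QQ_g$-invariant part and a complement with no $\Gamma^\QQ_g$-invariants, identify the invariant summand with the left-hand side of the desired isomorphism, and apply the vanishing theorem of Li--Sun \cite{LS19} to the complement.

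First I would observe that, in each cochain degree $q$, the space $N_q$ is a finite-dimensional algebraic representation of $\Gamma^\QQ_g = \GL[g](\QQ)$, and that both the Chevalley--Eilenberg differential and the tensor factor $P$ are $\Gamma^\QQ_g$-equivariant. Since $\GL[g](\QQ)$ is reductive and the base field is of characteristic zero, the invariants functor $(\blank)^{\Gamma^\QQ_g}$ is exact, and there is a canonical splitting of $\Gamma^\QQ_g$-equivariant cochain complexes
\[ N_* \iso N_*^{\Gamma^\QQ_g} \dirsum W_* \]
where $W_q$ has no nonzero $\Gamma^\QQ_g$-invariants for any $q$. This restricts to a splitting of $\Gamma_g$-equivariant cochain complexes, and exactness of invariants identifies $\Coho{q}(N_*^{\Gamma^\QQ_g}) \iso \Coho{q}(N_*)^{\Gamma^\QQ_g}$.

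On the summand $N_*^{\Gamma^\QQ_g}$ the $\Gamma_g$-action is trivial, so $\Coho{p}(\Gamma_g; \Coho{q}(N_*^{\Gamma^\QQ_g})) \iso \Coho{p}(\Gamma_g; \QQ) \tensor \Coho{q}(N_*^{\Gamma^\QQ_g})$, which is precisely the domain of the canonical map. It therefore suffices to show $\Coho{p}(\Gamma_g; \Coho{q}(W_*)) = 0$ in the stated range. Decomposing $\Coho{q}(W_*)$ into irreducible algebraic $\Gamma^\QQ_g$-representations (none of which are trivial, since the class of reps with no trivial isotypic component is closed under subquotients by semisimplicity), one bounds the bi-weight of each constituent, viewed as a Schur bifunctor in $\big(\Ho{k}(M_{g,1}; \QQ), \Ho{l}(M_{g,1}; \QQ)\big)$: the description $\lie g_g \iso \trunc{\Schur{(\shift[-m] \Lie)}{\shift[-1] H}}$, together with a degree/weight analysis along the lines of the proof of \cref{lemma:trunc_dgc}, shows that the contribution from $\CEcochains{q}(\lie g_g)$ has bi-weight bounded by roughly $\nicefrac{1}{2}(q+1)$, and tensoring with $P$ adds at most $r$. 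The Li--Sun vanishing theorem then gives, for each non-trivial irreducible algebraic $\lambda$ of bounded bi-weight, $\Coho{p}(\GL[g](\ZZ); \lambda) = 0$ in a slope-$1$ range matching precisely the conditions $g > p + 1$ and $g > \nicefrac{1}{2}(q+1) + r$.

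The principal obstacle is the careful verification of the weight bound on $\CEcoho{q}(\lie g_g) \tensor P$, tracking the interplay between the Chevalley--Eilenberg grading, the shift $\shift[-m]$ present in $\lie g_g$, and the Lie-arity filtration of $\Schur{\Lie}{\blank}$ to obtain the factor of $\nicefrac{1}{2}$ needed to match the proposition's bound. A secondary subtlety is that the Li--Sun vanishing is naturally stated for $\SL[g](\ZZ)$; the passage to $\GL[g](\ZZ)$-cohomology can be handled via a direct comparison using the short exact sequence $1 \to \SL[g](\ZZ) \to \GL[g](\ZZ) \to \set{\pm 1} \to 1$ together with \cref{lemma:sl_vs_gl_inv} to align the invariants on both sides.
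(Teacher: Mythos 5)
Your proposal follows the same route as the paper's: the same three ingredients — the Li--Sun slope-$1$ vanishing, the Schur bifunctor degree bound on $\CEcochains{q}(\lie g_g)$, and the $\SL$-vs-$\GL$ invariant comparison (\cref{lemma:sl_vs_gl_inv} plus Borel's Zariski density theorem) — do essentially the same work. Your decision to split $N_* = N_*^{\Gamma^\QQ_g} \dirsum W_*$ up front is a pleasant reorganization that avoids invoking the rational perfectness of $\Gamma_g$ (which the paper uses to commute $\Gamma_g$-invariants past cohomology), so your version is slightly leaner on inputs. However, what you flag as the ``principal obstacle'' is not one: the paper simply cites \cite[(Proof of) Proposition~7.1]{Gre} for the fact that $\CEcochains{q}(\lie g_g)$ is the value of a finite type Schur bifunctor of degree $\le q/2$; tensoring with $P$ then gives degree $\le q/2 + r$ for free, so there is no weight analysis to redo. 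More importantly, your description of the Li--Sun step conflates two separate things. As cited (\cite[Example~1.10]{LS19}), the result gives the isomorphism $\Coho{p}(\Gamma_g; M^{\Gamma_g}) \iso \Coho{p}(\Gamma_g; M)$ for $g \ge p+2$ for \emph{any} finite-dimensional rational representation $M$, with no weight condition. The Schur degree bound enters separately and is indispensable for the implication $\Coho{q}(W_*)^{\Gamma^\QQ_g} = 0 \Rightarrow \Coho{q}(W_*)^{\Gamma_g} = 0$: this requires \cref{lemma:sl_vs_gl_inv} together with the Zariski density of $\SL[g](\ZZ)$ in $\SL[g](\QQ)$ (Borel), the latter of which your sketch of the $\SL$-to-$\GL$ passage omits. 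Keep in mind that ``nontrivial as a $\GL[g](\QQ)$-representation'' does not by itself imply vanishing of $\GL[g](\ZZ)$-invariants — $\det^2$ is a counterexample — and it is precisely the degree bound, via \cref{lemma:sl_vs_gl_inv}, that rules such summands out.
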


\begin{proof}
  A result of Li--Sun \cite[Example~1.10]{LS19} implies that the canonical map
  \[ \Coho{p} \Big( \Gamma_g; \Coho{q} \big( \CEcochains{*}(\lie g_g) \tensor P \big)^{\Gamma_g} \Big)  \longto  \Coho{p} \Big( \Gamma_g; \Coho{q} \big( \CEcochains{*}(\lie g_g) \tensor P \big) \Big) \]
  is an isomorphism for $g \ge p + 2$.
  This uses that \cite[(Proof of) Proposition 7.1]{Gre} implies that $\CEcochains{q}(\lie g_g)$, and hence also $\Coho{q} \big( \CEcochains{*}(\lie g_g) \tensor P \big)$, is a finite dimensional rational representation of the reductive group $\GL[g]$ and thus completely reducible (see e.g.\ \cite[Theorem~22.42]{Mil}).
  
  Now, by \cite[Lemma~3.4]{Gre}, the group $\Gamma_g$ is rationally perfect (cf.\ \cite[Definiton~B.3]{BM}).
  Hence, taking $\Gamma_g$-invariants is exact when restricted to the category of finite dimensional representations.
  In particular the canonical map
  \[ \Coho{q} \Big( \big( \CEcochains{*}(\lie g_g) \tensor P \big)^{\Gamma_g} \Big)  \longto  \Coho{q} \big( \CEcochains{*}(\lie g_g) \tensor P \big)^{\Gamma_g} \]
  is an isomorphism.
  
  Lastly we use that, by a result of Borel \cite[Theorem~1]{Bor66}, the arithmetic subgroup $\SL[g](\ZZ)$ is Zariski dense in $\SL[g](\QQ)$ (note that the condition of the theorem is fulfilled since $\SL[g]$ is almost simple for $g \ge 2$).
  Hence we have $V^{\SL[g](\ZZ)} = V^{\SL[g](\QQ)}$ for any rational representation $V$ of $\SL[g]$ over $\QQ$.
  In particular, for $F$ a finite type Schur bifunctor of degree $\le d$ and $V \defeq F(\QQ^g, \dual{(\QQ^g)})$, we thus have, when $g > d$, that
  \[ V^{\GL[g](\ZZ)}  =  \big( V^{\SL[g](\ZZ)} \big)^{\units \ZZ}  =  \big( V^{\SL[g](\QQ)} \big)^{\units \ZZ}  =  \big( V^{\GL[g](\QQ)} \big)^{\units \ZZ}  =  V^{\GL[g](\QQ)} \]
  where the third equality uses \Cref{lemma:sl_vs_gl_inv}.
  Now we note that, by \cite[(Proof of) Proposition 7.1]{Gre}, there exists a finite type Schur bifunctor $F_q$ of degree $\le \frac q 2$ such that there is an isomorphism
  \[ \CEcochains{q}(\lie g_g)  \iso  F_q \big( \Ho{k}(M_{g,1}; \QQ), \Ho{l}(M_{g,1}; \QQ) \big) \]
  of $\Gamma^\QQ_g$-modules (where the action on the right hand side is diagonally).
  Hence $\CEcochains{q}(\lie g_g) \tensor P$ is, in the same way, the value of a Schur bifunctor of degree $\le \frac q 2 + r$.
  This finishes the proof.
  For the condition on $g$ given in the statement, note that an isomorphism of chain complexes in a range induces an isomorphism of homology in a range smaller by $1$.
\end{proof}

\subsection{The main theorem}

We are now ready to deduce our main theorem from everything we have done so far.

\begin{theorem} \label{thm:main_coeff}
  Let $3 \le k < l \le 2k - 2$ and $2 \le g$ be integers, and let $I$ and $J$ be finite linearly ordered sets.
  Furthermore, set $H_{g;i} \defeq \Ho i (M^{k,l}_{g,1}; \QQ)$ as a ${\htpygrp{0}} \big( \autbdry (M^{k,l}_{g,1}) \big)$-module.
  Then there is, in cohomological degrees
  \[ \le \begin{cases} g - 2, & \text{when $\card I + \card J \le \frac 1 2 g$} \\ 2 (g - \card I - \card J - 1), & \text{otherwise} \end{cases} \]
  an isomorphism of graded $(\Symm I \times \Symm J)$-modules
  \begin{multline*}
    \Coho * \big( \B \autbdry (M^{k,l}_{g,1}); H_{g;l}^{\tensor I} \tensor H_{g;k}^{\tensor J} \big)  \iso \\
    \shift[(k-1) \card I + (l-1) \card J] \Coho * \big( \GL(\ZZ); \QQ \big) \tensor \Coho * \big( \dual{(\DGCtrunc {k + l - 2} {\Lie}_{I,J})} \big) \tensor \sgn_I^{\tensor k-1} \tensor \sgn_J^{\tensor l-1}
  \end{multline*}
  compatible with the stabilization maps on the left hand side (here the shift refers to the cohomological grading).
  It is an isomorphism of algebras when $I = J = \emptyset$, and an isomorphism of modules over these algebras for general $I$ and $J$.
  Here $\GL(\ZZ) \defeq \colim{g \in \NNo} \GL[g](\ZZ)$ and $\sgn_S$ is the sign representation of $\Symm S$.
  Also recall that $\DGCtrunc {k + l - 2} {\Lie}_{I,J}$ is the truncated directed $(I,J)$-graph complex (see \cref{def:trunc_dgc}) associated to the cyclic Lie operad.
  
  In particular, after stabilizing, we obtain an isomorphism
  \begin{multline*}
    \lim {g \in \NNo} \Coho * \big( \B \autbdry (M^{k,l}_{g,1}); H_{g;l}^{\tensor I} \tensor H_{g;k}^{\tensor J} \big)  \iso \\
    \shift[(k-1) \card I + (l-1) \card J] \Coho * \big( \GL(\ZZ); \QQ \big) \tensor \Coho * \big( \dual{(\DGCtrunc {k + l - 2} {\Lie}_{I,J})} \big) \tensor \sgn_I^{\tensor k-1} \tensor \sgn_J^{\tensor l-1}
  \end{multline*}
  of graded $(\Symm I \times \Symm J)$-modules.
  It is an isomorphism of algebras when $I = J = \emptyset$, and an isomorphism of modules over these algebras for general $I$ and $J$.
\end{theorem}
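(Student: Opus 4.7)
The plan is to assemble the theorem by composing three main isomorphisms, each already set up in the preceding sections, and then translate the resulting identification of coinvariants into the desired identification of invariants via linear dualization.

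First I would apply \cref{prop:BZ} with the $\Gamma_g$-module $P \defeq H_{g;l}^{\tensor I} \tensor H_{g;k}^{\tensor J}$, yielding an isomorphism
\[ \Coho * \big( \B \autbdry (M_{g,1}); P \big)  \iso  \Coho * \big( \Gamma_g; \CEcoho * (\lie g_g) \tensor P \big) \]
compatible with stabilization. Next I would invoke \cref{prop:red_to_invariants} (applied to the finite type Schur bifunctor $Q(V_1, V_2) \defeq V_2^{\tensor I} \tensor V_1^{\tensor J}$, which has degree $\le \card I + \card J$) to split the right-hand side, in the appropriate range, as
\[ \Coho * (\Gamma_g; \QQ) \tensor \Coho * \Big( \big( \CEcochains * (\lie g_g) \tensor P \big)^{\Gamma^\QQ_g} \Big). \]
Borel's computation identifies the left tensor factor, after stabilization, with $\Coho * (\GL(\ZZ); \QQ)$.

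The remaining work is to identify the second tensor factor with $\shift[(k-1) \card I + (l-1) \card J] \Coho *(\dual{\DGCtrunc {k+l-2} {\Lie}_{I,J}}) \tensor \sgn_I^{\tensor k-1} \tensor \sgn_J^{\tensor l-1}$. For this I would pass from invariants to (duals of) coinvariants: since each degree of $\CEchains *(\lie g_g) \tensor \dual P$ is a finite-dimensional rational $\Gamma^\QQ_g$-representation, there is a canonical isomorphism
\[ \big( \CEcochains *(\lie g_g) \tensor P \big)^{\Gamma^\QQ_g}  \iso  \dual{\coinv{\big(\CEchains *(\lie g_g) \tensor \dual P\big)}{\Gamma^\QQ_g}}. \]
Using the Berglund--Madsen identification $\lie g_g \iso \trunc{\Schur {(\shift[-(k+l-2)] \Lie)} {\shift[-1] \rHo *(M_{g,1}; \QQ)}}$ and the symplectic isomorphism $W_g \iso \shift[k+l-2] \dual{V_g}$ of \cref{lemma:identify_with_dual} (with $V_g \defeq \shift[-1] H_{g;k}$ and $W_g \defeq \shift[-1] H_{g;l}$), one rewrites $\dual P \iso \dual H_{g;l}^{\tensor I} \tensor \dual H_{g;k}^{\tensor J}$ as $V_g^{\tensor I} \tensor W_g^{\tensor J}$ up to a degree shift by $-(k-1)\card I - (l-1)\card J$ and a sign twist by $\sgn_I^{\tensor k-1} \tensor \sgn_J^{\tensor l-1}$, the latter coming from permuting the shifts past the symplectic identifications. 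Finally \cref{lemma:trunc_dgc} (applied with $n = k - 1$ and $m = k + l - 2$, noting that $2n < m < 3n$ holds precisely because $3 \le k < l \le 2k - 2$) identifies the resulting coinvariants, in a stable range, with $\DGCtrunc {k + l - 2} \Lie_{I,J}$. Taking linear duals throughout yields the desired isomorphism.

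The main obstacle, and the bulk of the bookkeeping, is tracking the exact degree shift and sign representation introduced by the symplectic identifications and the various Koszul signs, so that one lands on the correct external factor $\shift[(k-1) \card I + (l-1) \card J] (\blank) \tensor \sgn_I^{\tensor k-1} \tensor \sgn_J^{\tensor l-1}$. One must also verify that the composite respects the algebra structure when $I = J = \emptyset$ and the module structure over it for general $I$ and $J$: this follows because each of the constituent isomorphisms (\cref{prop:BZ}, \cref{prop:red_to_invariants}, the dualization, and \cref{lemma:trunc_dgc}) is compatible with the relevant (co)algebra/(co)module structures. For the stable range, one combines the hypothesis $g > \max(p + 1, \tfrac{q+1}{2} + \card I + \card J)$ from \cref{prop:red_to_invariants} (where $p + q$ is the total cohomological degree) with the graph-complex range from \cref{lemma:trunc_dgc}; the former gives the bound $g - 2$ when $\card I + \card J \le \tfrac g 2$ and the latter, via the bound involving $\dim H_g = 2g$ minus the number of external flags, yields the refined bound $2(g - \card I - \card J - 1)$ in the remaining case. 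Passing to the colimit over $g$ gives the final stable statement, with $\Gamma_g \iso \GL[g](\ZZ)$ stabilizing to $\GL(\ZZ)$ and the graph complex being independent of $g$.
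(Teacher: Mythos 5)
Your proposal follows the same route as the paper: compose \cref{prop:BZ} with \cref{prop:red_to_invariants} (for the Schur bifunctor $Q(V_1,V_2) = V_2^{\tensor I} \tensor V_1^{\tensor J}$, degree $\le \card I + \card J$), use Borel to replace $\Coho *(\Gamma_g;\QQ)$ by $\Coho *(\GL(\ZZ);\QQ)$, dualize the invariants to coinvariants, substitute the Berglund--Madsen model for $\lie g_g$, and apply \cref{lemma:trunc_dgc} with $n = k-1$, $m = k+l-2$ (checking $2n < m < 3n$ from $k < l \le 2k - 2$), tracking the degree shift $s^{(k-1)\card I + (l-1)\card J}$ and sign twist $\sgn_I^{\tensor k-1} \tensor \sgn_J^{\tensor l-1}$. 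That is exactly the paper's argument, and your identification of where each structural compatibility comes from is sound.

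The one point you get wrong is the provenance of the stable range. You claim that the branch $2(g - \card I - \card J - 1)$ ``in the remaining case'' comes from \cref{lemma:trunc_dgc} ``via the bound involving $\dim H_g = 2g$ minus the number of external flags''. That bound does not exist in the lemma: \cref{lemma:trunc_dgc} restricts to homological degrees $\le \alpha \dim H_i$, giving $q \le \frac{4}{3}g - 1$ (when $l = 2k-2$) or $q \le 2g - 1$ (otherwise) after passing to homology, and these turn out to be non-binding given the other constraints. In fact \emph{both} branches of the case distinction come from the single hypothesis $g > \max\big(s+1,\ \tfrac{1}{2}(s+1) + \card I + \card J\big)$ of \cref{prop:red_to_invariants}: the first term gives $s \le g - 2$ and the second gives $s \le 2(g - \card I - \card J - 1)$, and the min of the two is $g - 2$ precisely when $\card I + \card J \le g/2$. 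Your invented ``$2g$ minus number of external flags'' heuristic also evaluates to $2g - \card I - \card J$, not $2(g - \card I - \card J - 1)$, so it would not even produce the stated bound. The final range you write is correct, but for the wrong reason.
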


\begin{proof}
  Combining \cref{prop:BZ,prop:red_to_invariants}, we obtain, in the stable range
  \[ g > \max_{p + q = s} {\max} \big( p + 1, \tfrac{1}{2} (q + 1) + \card I + \card J \big) = {\max} \big( s + 1, \tfrac{1}{2} (s + 1) + \card I + \card J \big) \]
  an isomorphism of graded $(\Symm I \times \Symm J)$-modules
  \[ \Coho{s} \big( \B \autbdry (M^{k,l}_{g,1}); H_{g;l}^{\tensor I} \tensor H_{g;k}^{\tensor J} \big)  \iso  \Dirsum_{p + q = s} \Coho{p}(\Gamma_g; \QQ) \tensor \Coho{q} \Big( \big( \CEcochains{*}(\lie g_g) \tensor H_{g;l}^{\tensor I} \tensor H_{g;k}^{\tensor J} \big)^{\Gamma^\QQ_g} \Big) \]
  which is compatible with the stabilization maps and the algebra/module structures.
  It follows from work of Borel \cite{Bor74} (see also \cite[Theorem~A]{KMP}\footnote{They actually prove a slightly better range than Borel, which we will not need.}) that the canonical map $\Coho{p}(\GL(\ZZ); \QQ) \to \Coho{p}(\GL[g](\ZZ); \QQ)$ is an isomorphism for $g \ge p + 2$; in particular we can replace $\Coho{p}(\Gamma_g; \QQ) \iso \Coho{p}(\GL[g](\ZZ); \QQ)$ with $\Coho{p}(\GL(\ZZ); \QQ)$ in the formula above.
  
  Now, by \cite[Proposition~6.6]{BM}, there is an isomorphism of Lie algebras
  \[ \lie g_g  =  \trunc{{\Der[\omega]} \big( \freelie(\shift[-1] \rHo{*}(M_{g,1}; \QQ)) \big)}  \iso  \trunc {\Schur[\big] {(\shift[-(k + l - 2)] \operad \Lie)} {\shift[-1] \rHo{*}(M_{g,1}; \QQ)}} \]
  that is compatible with the $\Gamma^\QQ_g$-actions and the stabilization maps.
  Here the right hand side is equipped with the graded Lie algebra structure of \cref{lemma:Schur_cyclic_Lie}, where $\shift[-1] \rHo{*}(M_{g,1}; \QQ)$ is equipped with the anti-symmetric pairing given by $\iprod {\shift[-1] a} {\shift[-1] b} = (-1)^{\deg a + 1} \intpair a b$.
  Then \cref{lemma:trunc_dgc} implies that, in degrees $q \le \frac{4}{3} g - 1$ if $l = 2k - 2$ or $q \le 2g - 1$ if $l \neq 2k - 2$, there is an isomorphism of graded $(\Symm I \times \Symm J)$-modules
  \[ \Ho q \Big( {\coinv {\big( \CEchains * (\lie g_g) \tensor (\shift[k-1] H_{g;k})^{\tensor I} \tensor (\shift[l-1] H_{g;l})^{\tensor J} \big)} {\Gamma^\QQ_g}} \Big)  \iso  \Ho q \big( \DGCtrunc {k + l - 2} {\Lie}_{I,J} \big) \]
  compatible with the stabilization maps on the left hand side as well as the coalgebra/comodule structures.
  (Note that an isomorphism of chain complexes up to degree $p$ induces an isomorphism on homology up to degree $p - 1$.)
  Dualizing this yields the desired statement since there is an isomorphism
  \[ (\shift[k-1] H_{g;k})^{\tensor I} \tensor (\shift[l-1] H_{g;l})^{\tensor J}  \iso  \shift[(k-1) \card I + (l-1) \card J] H_{g;k}^{\tensor I} \tensor H_{g;l}^{\tensor J} \tensor \sgn_I^{\tensor k-1} \tensor \sgn_J^{\tensor l-1} \]
  of $(\Gamma^\QQ_g \times \Symm I \times \Symm J)$-modules and $H_{g;k}$ and $H_{g;l}$ are dual to each other.
\end{proof}

\begin{remark}
  Via Schur--Weyl duality (see e.g.\ \cite[§6.1]{FH} or \cite[§5.19]{Eti}), \cref{thm:main_coeff} can be used to deduce descriptions of $\Coho * \big( \B \autbdry (M^{k,l}_{g,1}); P \big)$ for more general $\Gamma^\QQ_g$-modules $P$.
\end{remark}

\begin{corollary} \label{thm:main}
  Let $3 \le k < l \le 2k - 2$ and $2 \le g$ be integers.
  Then there is, in cohomological degrees $\le g - 2$, an isomorphism of graded algebras
  \[ \Coho * \big( \B \autbdry (M^{k,l}_{g,1}); \QQ \big)  \iso  \Coho * \big( \GL(\ZZ); \QQ \big) \tensor \Coho * \big( \dual{\UGC {k + l - 2} \Lie} \big) \]
  compatible with the stabilization maps on the left hand side.
  (Recall that $\UGC {k + l - 2} \Lie$ denotes the undirected graph complex associated to the cyclic Lie operad, see \cref{def:UGC_diff}.)
  
  In particular, after stabilizing, we obtain an isomorphism
  \[ \lim {g \in \NNo} \Coho * \big( \B \autbdry (M^{k,l}_{g,1}); \QQ \big)  \iso  \Coho * \big( \GL(\ZZ); \QQ \big) \tensor \Coho * \big( \dual{\UGC {k + l - 2} \Lie} \big) \]
  of graded algebras.
\end{corollary}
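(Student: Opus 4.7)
The corollary is essentially a direct consequence of Theorem~\ref{thm:main_coeff} (specialized to $I = J = \emptyset$) combined with Theorem~\ref{lemma:UGC_diff}, so all the substantive work has already been done. I would proceed in two short steps.

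First, I would specialize Theorem~\ref{thm:main_coeff} to the case of trivial index sets $I = J = \emptyset$. The condition $\card I + \card J \le \tfrac 1 2 g$ is vacuous, so the stable range becomes $\le g - 2$. The cohomological shift $\shift[(k-1)\card I + (l-1)\card J]$ reduces to the identity, the sign representations $\sgn_I^{\tensor k-1} \tensor \sgn_J^{\tensor l-1}$ disappear, and the truncated directed $(I,J)$-graph complex reduces to $\DGCtrunc{k+l-2}{\Lie}$. Theorem~\ref{thm:main_coeff} thus yields, in degrees $\le g - 2$, an isomorphism of graded algebras
\[ \Coho *\big( \B\autbdry(M^{k,l}_{g,1}); \QQ \big) \iso \Coho *\big( \GL(\ZZ); \QQ \big) \tensor \Coho *\big( \dual{\DGCtrunc{k+l-2}{\Lie}} \big) \]
that is compatible with the stabilization maps.

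Second, I would apply Theorem~\ref{lemma:UGC_diff} with $m = k + l - 2$ and $\operad C = \Lie$. The cyclic Lie operad is concentrated in non-negative even degrees and satisfies $\cyc{\Lie}{2} \iso \gen\basefield \id$ with the trivial $\Symm 2$-action, so the hypotheses are met. The theorem then gives an isomorphism of graded coalgebras
\[ \Ho *\big( \DGCtrunc{k+l-2}{\Lie} \big) \iso \Ho *\big( \UGC{k+l-2}{\Lie} \big). \]
Linearly dualizing over $\QQ$ converts this into an isomorphism of graded algebras between $\Coho *\big( \dual{\DGCtrunc{k+l-2}{\Lie}} \big)$ and $\Coho *\big( \dual{\UGC{k+l-2}{\Lie}} \big)$, and tensoring with $\Coho *(\GL(\ZZ); \QQ)$ produces the desired isomorphism in the stated range. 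The stable statement follows by passing to the limit over $g$, since the isomorphisms are compatible with stabilization and the range $\le g - 2$ grows unboundedly.

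The main technical obstacles have already been overcome in the earlier theorems, so at this stage there is really nothing hard to do. The only point that merits a brief mention is ensuring that the passage from the coalgebra isomorphism of Theorem~\ref{lemma:UGC_diff} to an algebra isomorphism of duals is valid: this uses that, in each fixed homological degree, both $\DGCtrunc{k+l-2}{\Lie}$ and $\UGC{k+l-2}{\Lie}$ are finite-dimensional, so that $\Coho *(\dual C) \iso \dual{\Ho *(C)}$ canonically as graded algebras, which is immediate from the definitions.
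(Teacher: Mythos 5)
Your proposal is correct and matches the paper's own (one-line) proof: specialize \cref{thm:main_coeff} to $I=J=\emptyset$ and then invoke \cref{lemma:UGC_diff}, dualizing the resulting coalgebra isomorphism. The extra remark about finite-dimensionality justifying the passage to duals is a reasonable clarification, though the paper leaves it implicit.
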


\begin{proof}
  This follows from \cref{thm:main_coeff} with $I = J = \emptyset$ and \cref{lemma:UGC_diff}.
\end{proof}

\begin{remark}
  It should be possible to obtain a version of this corollary for cohomology with non-trivial coefficients by generalizing the arguments of \cref{sec:undirected_gc} (see \cref{rem:ugc_coeff}).
\end{remark}

\begin{remark}
  Note that
  \[ \Coho{*} \big( \GL(\ZZ); \QQ \big)  \iso  \freegca \gen \QQ {x_i \mid i \in \NNpos}  \qquad\text{where}\quad \deg{x_i} = 4i + 1 \]
  by a classical result of Borel \cite[11.4]{Bor74}.
  The homology of $\UGC {k + l - 2} \Lie$ is more mysterious.
  See \cref{rem:lie_graph_homology} for a summary of what is known.
\end{remark}

\printbibliography

\contact{Department of Mathematics, Stockholm University, 106 91 Stockholm, Sweden}{robin.stoll@math.su.se}

\end{document}